\newtheorem{maintheorem}{Theorem}
\newtheorem{proposition}{Proposition}[section]
\newtheorem{theorem}[proposition]{Theorem}
\newtheorem{lemma}[proposition]{Lemma}
\newtheorem{fact}[proposition]{Fact}
\newtheorem*{claim*}{Claim}
\newtheorem{conjecture}{Conjecture}
\theoremstyle{definition}
\newtheorem{remark}[proposition]{Remark}
\numberwithin{equation}{section}
\DeclareMathOperator{\cl}{cl}
\DeclareMathOperator{\Aut}{Aut}
\DeclareMathOperator{\Hom}{Hom}
\DeclareMathOperator{\Gal}{Gal}
\DeclareMathOperator{\Spec}{Spec}
\DeclareMathOperator{\Char}{char}
\DeclareMathOperator{\Sym}{Sym}
\let\H\relax
\DeclareMathOperator{\H}{H}
\DeclareMathOperator{\trdeg}{tr.deg}
\DeclareMathOperator{\Isom}{Isom}
\DeclareMathOperator{\divv}{div}
\newcommand{\G}{\mathbb{G}}
\newcommand{\Z}{\mathbb{Z}}
\newcommand{\A}{\mathbb{A}}
\newcommand{\Pbb}{\mathbb{P}}
\newcommand{\Gc}{\mathcal{G}}
\newcommand{\Oc}{\mathcal{O}}
\newcommand{\Pc}{\mathcal{P}}
\newcommand{\Lc}{\mathcal{L}_{\rm cl}}
\newcommand{\Gf}{\mathfrak{G}}
\newcommand{\Mf}{\mathfrak{M}}
\newcommand{\Uf}{\mathfrak{U}}
\newcommand{\Df}{\mathfrak{D}}
\newcommand{\mf}{\mathfrak{m}}
\newcommand{\Cf}{\mathfrak{C}}
\newcommand{\kf}{\mathfrak{k}}
\newcommand{\Kf}{\mathfrak{K}}
\newcommand{\Lf}{\mathfrak{L}}
\newcommand{\Zf}{\mathfrak{Z}}
\newcommand{\tplx}{\underline{\mathbf{x}}}
\newcommand{\tply}{\underline{\mathbf{y}}}
\newcommand{\tplb}{\underline{\mathbf{b}}}
\newcommand{\tplc}{\underline{\mathbf{c}}}
\newcommand{\tbf}{\mathbf{t}}
\newcommand{\vbf}{\mathbf{v}}
\newcommand{\wbf}{\mathbf{w}}
\newcommand{\xbf}{\mathbf{x}}
\newcommand{\Pbf}{\mathbf{P}}
\newcommand{\Qbf}{\mathbf{Q}}
\newcommand{\dimm}{\dim^{\rm M}}
\newcommand{\Gfor}{\mathfrak{G}^1_{\rm rat}}
\newcommand{\Isomrat}{\Isom^{\rm M}_{\rm rat}}
\newcommand{\K}{\operatorname{K}^{\rm M}}
\let\k\relax
\newcommand{\k}{\operatorname{k}^{\rm M}}
\newcommand{\bK}{\overline{\operatorname{K}}^{\rm M}}
\newcommand{\Rfrat}{\mathfrak{R}_{\rm rat}}
\newcommand{\Kcalm}{\mathcal{K}^{\rm M}_{\rm rat}}
\newcommand{\UIsom}{\underline{\Isom}}
\newcommand{\Isomm}{\Isom^{\rm M}}
\newcommand{\UIsomm}{\underline{\Isom}^{\rm M}}
\begin{document}
\title{Reconstructing function fields from rational quotients of mod-$\ell$ Galois groups}
\author{Adam Topaz}
\thanks{This research was supported by NSF postdoctoral fellowship DMS-1304114.}
\address{Adam Topaz \vskip 0pt
Department of Mathematics \vskip 0pt
University of California, Berkeley \vskip 0pt
970 Evans Hall \#3840 \vskip 0pt
Berkeley, CA. 94720-3840 \vskip 0pt
USA}
\email{atopaz@math.berkeley.edu}
\urladdr{http://math.berkeley.edu/~atopaz}
\date{\today}
\subjclass[2010]{12F10, 12G05, 12J10, 19D45.}
\keywords{Anabelian geometry, pro-$\ell$ groups, Galois groups, Milnor K-theory, Galois cohomology, abelian-by-central, global theory, function fields, prime divisors, combinatorial geometry}

\begin{abstract}
In this paper, we develop the main step in the \emph{global theory} for the mod-$\ell$ analogue of Bogomolov's program in birational anabelian geometry for higher-dimensional function fields over algebraically closed fields.
More precisely, we show how to reconstruct a function field $K$ of transcendence degree $\geq 5$ over an algebraically closed field, up-to inseparable extensions, from the mod-$\ell$ abelian-by-central Galois group of $K$ endowed with the collection of mod-$\ell$ rational quotients.
\end{abstract}

\maketitle
\tableofcontents

\section{Introduction}
\label{sec:intro}

In the early 1990's, {\sc Bogomolov} \cite{Bogomolov1991} introduced a program whose ultimate goal is to reconstruct higher-dimensional function fields over algebraically closed fields from their pro-$\ell$ \emph{abelian-by-central} Galois groups.
If successful, this program would go far beyond Grothendieck's birational anabelian geometry, since Bogomolov's program deals with \emph{small} pro-$\ell$ Galois groups and with fields which are purely \emph{geometric} in nature.

In this paper, we consider the \emph{mod-$\ell$ abelian-by-central} variant of Bogomolov's program.
The mod-$\ell$ context is fundamentally different than the pro-$\ell$ context, primarily with respect to the \emph{global theory}, because one can no longer use the \emph{fundamental theorem of projective geometry} (see the detailed discussion below).
The primary goal of this paper is to develop the \emph{global theory} in the mod-$\ell$ context by using the results of {\sc Evans-Hrushovski} \cite{Evans1991}, \cite{Evans1995} and {\sc Gismatullin} \cite{Gismatullin2008}, which stem from the so-called ``group-configuration theorem'' in geometric stability theory. 
The analogy between the fundamental theorem of projective geometry and the work of {\sc Evans-Hrushovski} and {\sc Gismatullin} has been long known to model theorists, and it was also recently noticed in the context of Galois theory by {\sc Bogomolov-Tschinkel} \cite{zbMATH06092073}.
Nevertheless, the present paper seems to be the first to use these results in the context of anabelian geometry in an essential way.
\vskip 10pt

To make Bogomolov's program into a precise conjecture, we begin by introducing some notation.
Throughout the paper we will work with a fixed prime $\ell$.
In the context of profinite groups, we will tacitly consider only continuous maps and closed subgroups.
For a field $F$, denote by $\Gc_F := \Gal(F(\ell)|F)$ the maximal pro-$\ell$ Galois group of $F$.
Also, we will let $F^i$ denote the perfect closure of $F$.

Let $K$ be a function field over an algebraically closed field $k$.
In this case, consider the following two Galois groups which are constructed from $\Gc_K$, with notation/terminology due to {\sc Pop} \cite{Pop2011}:
\begin{enumerate}
	\item $\Pi_K^a := \Gc_K/[\Gc_K,\Gc_K]$, the maximal {\bf pro-$\ell$ abelian} Galois group of $K$.
	\item $\Pi_K^c := \Gc_K/[\Gc_K,[\Gc_K,\Gc_K]]$, the maximal {\bf pro-$\ell$ abelian-by-central} Galois group of $K$.
\end{enumerate}

Now let $L$ be another function field over an algebraically closed field $l$.
We denote by $\Isom^i(K,L)$ the collection of isomorphisms $\phi : K^i \rightarrow L^i$.
Note that any such isomorphism $\phi$ automatically satisfies $\phi k = l$, since $k$ resp. $l$ is the set of multiplicatively divisible elements in $K^i$ resp. $L^i$.
If $\Char k = p > 0$, then the Frobenius automorphism $\operatorname{Frob}_p : K^i \rightarrow K^i$ acts on $\Isom^i(K,L)$ by composition.
In this case we denote by $\Isom^i_F(K,L)$ the orbits of this action on $\Isom^i(K,L)$.
If $\Char k = 0$, then we define $\Isom^i_F(K,L) := \Isom^i(K,L) = \Isom(K,L)$ to keep the notation consistent.

We will denote by $\Delta_K$ the kernel of the central extension $\Pi_K^c \twoheadrightarrow \Pi_K^a$.
Note that both $\Pi_K^a$ and $\Delta_K$ are abelian pro-$\ell$ groups, and we will consider them as (additive) $\Z_\ell$-modules.
For $\sigma,\tau \in \Pi_K^a$, we define $[\sigma,\tau] := \tilde\sigma^{-1}\tilde\tau^{-1}\tilde\sigma\tilde\tau$ where $\tilde\sigma,\tilde\tau \in \Pi_K^c$ are some lifts of $\sigma,\tau$.
Since $\Pi_K^c \twoheadrightarrow \Pi_K^a$ is a central extension, the element $[\sigma,\tau] \in \Delta_K$ doesn't depend on the choice of lifts of $\sigma,\tau$, and it is well known that $[\bullet,\bullet] : \Pi_K^a \times \Pi_K^a \rightarrow \Delta_K$ is $\Z_\ell$-bilinear.

We say that an isomorphism $\phi : \Pi_L^a \rightarrow \Pi_K^a$ is {\bf compatible with $[\bullet,\bullet]$} if there exists some isomorphism $\psi : \Delta_L \rightarrow \Delta_K$ such that $\psi[\sigma,\tau] = [\phi\sigma,\phi\tau]$ for all $\sigma,\tau \in \Pi_L^a$.
We denote by $\Isom^c(\Pi_L^a,\Pi_K^a)$ the collection of isomorphisms $\phi : \Pi_L^a \rightarrow \Pi_K^a$ which are compatible with $[\bullet,\bullet]$.
Note that if $\epsilon \in \Z_\ell^\times$ and $\phi \in \Isom^c(\Pi_L^a,\Pi_K^a)$, then we obtain an induced isomorphism $\epsilon \cdot \phi : \Pi_L^a \rightarrow \Pi_K^a$, defined by
\[ (\epsilon \cdot \phi)(x) = \epsilon \cdot \phi(x) = \phi(\epsilon \cdot x). \]
Since $[\bullet,\bullet]$ is $\Z_\ell$-bilinear, it follows that $\epsilon \cdot \phi$ is also an element of $\Isom^c(\Pi_L^a,\Pi_K^a)$.
Thus, we have a canonical left-multiplication action of $\Z_\ell^\times$ on $\Isom^c(\Pi_L^a,\Pi_K^a)$, and we denote the orbits of this action by $\UIsom^c(\Pi_L^a,\Pi_K^a)$.

Since the inclusion $K \hookrightarrow K^i$ induces an isomorphism $G_{K^i} \rightarrow G_K$ of absolute Galois groups, we obtain a canonical map $\Isom^i(K,L) \rightarrow \Isom^c(\Pi_L^a,\Pi_K^a)$.
If $\Char K \neq \ell$, we therefore also have an induced canonical map 
\[ \Isom^i_F(K,L) \rightarrow \UIsom^c(\Pi_L^a,\Pi_K^a). \]
Using the notation above, {\sc Pop} \cite{Pop2012a} formulated the following conjecture which is a precise ``Isom'' version of Bogomolov's program in birational anabelian geometry -- see the ``Target Result'' in loc.cit.
\begin{conjecture}[{\sc Bogomolov} \cite{Bogomolov1991}, {\sc Pop} \cite{Pop2012a}]
\label{conjecture:bogomolov-pop-pro-ell}
Let $K|k$ and $L|l$ be function fields over algebraically closed fields such that $\Char K \neq \ell$ and $\trdeg(K|k) \geq 2$.
Then the canonical map 
\[\Isom^i_F(K,L) \rightarrow \UIsom^c(\Pi_L^a,\Pi_K^a) \]
is a bijection.
\end{conjecture}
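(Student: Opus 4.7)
The plan is to follow the standard two-step architecture of Bogomolov's program applied in the pro-$\ell$ setting: a local theory recognizing the decomposition groups of prime divisors from $\Pi_K^c \twoheadrightarrow \Pi_K^a$, followed by a global theory recovering the field itself from these data via the \emph{fundamental theorem of projective geometry}. One passes from $\Pi_K^a$, via Kummer duality, to the pro-$\ell$ completion $\widehat{K^\times}$ of $K^\times$, and uses the commutator-compatibility condition on $\phi \in \Isom^c(\Pi_L^a,\Pi_K^a)$ to recognize, intrinsically, those subgroups of $\widehat{K^\times}$ that arise from the multiplicative groups of subfields of prescribed type. Injectivity of $\Isom^i_F(K,L) \to \UIsom^c(\Pi_L^a,\Pi_K^a)$ is comparatively formal, following from Kummer theory together with the facts that $\Gc_{K^i} = \Gc_K$ (so that Frobenius twists account for the ambiguity in positive characteristic) and that the $\Z_\ell^\times$-action on $\Isom^c$ accounts for the remaining scaling; so the heart of the matter is surjectivity.

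Fix $\phi \in \Isom^c(\Pi_L^a,\Pi_K^a)$. First I would carry out the \emph{local theory}: construct, group-theoretically from $\phi$ and the commutator pairing $[\bullet,\bullet]$, a bijection between the families of decomposition subgroups $\D_v \subset \Pi_K^a$ attached to the divisorial valuations of models of $K|k$ and their $L$-counterparts. The abelian-by-central datum encodes precisely enough to detect maximal ``commuting-liftable'' pairs in $\Pi_K^a$, and on Kummer duals these cut out the filtration $U_v^{(1)} \subset U_v \subset K^\times$ associated to each divisorial $v$. This local recipe is essentially that of Pop \cite{Pop2011} and Bogomolov--Tschinkel \cite{zbMATH06092073} adapted to the present pro-$\ell$ setting; at this stage one uses nothing about the global structure of $K$.

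The second and more delicate step is the \emph{global theory}. After Kummer-dualizing, $\phi$ induces an isomorphism $\Phi : \widehat{K^\times} \to \widehat{L^\times}$ of pro-$\ell$ abelian groups; the local theory produces a matching of their decomposition filtrations; together these data permit one to recognize, intrinsically, the image in $\widehat{K^\times}$ of any $k$-rational subfield $k(x)^\times \subset K^\times$ for $x \in K$ transcendental over $k$. The cosets of such subfields modulo $k^\times$ correspond to lines in the projective space $\Pbb(K|k)$ whose points are the $1$-dimensional $k$-subspaces of $K$. Once this collinearity relation has been recognized group-theoretically, the fundamental theorem of projective geometry --- applicable because $\trdeg(K|k) \geq 2$ makes $\Pbb(K|k)$ a projective space of dimension $\geq 2$ --- yields a semilinear isomorphism $K \to L$ unique up to the canonical $\Z_\ell^\times$-scaling built into $\UIsom^c$. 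Correcting for this scaling and then, when $\Char K = p > 0$, for the Frobenius ambiguity on $K^i$, produces the desired element of $\Isom^i_F(K,L)$ mapping to the original $\UIsom^c$-class of $\phi$.

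The main obstacle is the global step: one must show both that the subsets of $\Pi_K^a$ arising from rational subfields $k(x) \subset K$ are intrinsically recoverable from the pro-$\ell$ abelian-by-central datum --- i.e.\ that the local theory really does pin down $k(x)^\times$ as a distinguished subset of $\widehat{K^\times}$ and not merely a union of its valuation filtrations --- and that the resulting incidence geometry is rich enough to invoke the classical projective-geometry theorem rather than one of its weaker Moufang-plane variants. A secondary, essentially bookkeeping, obstacle is matching the reconstructed multiplicative and additive pictures through a single $\Z_\ell^\times$-orbit, which is precisely why the quotient $\UIsom^c$ rather than $\Isom^c$ appears on the right-hand side of the conjecture.
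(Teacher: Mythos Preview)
The statement you are attempting to prove is a \emph{conjecture}, not a theorem: the paper does not contain a proof of it and explicitly notes that it ``is far from being proven in full generality,'' having been settled only when $k$ is the algebraic closure of a finite field (by Bogomolov--Tschinkel and independently by Pop). Your proposal is not an alternative proof but rather a summary of the known \emph{Pro-$\ell$ Strategy} outlined in \S\ref{subsec: pro-ell strategy} of the paper, and it glosses over exactly the two places where that strategy is currently incomplete.

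The first genuine gap is in your local theory. Over an arbitrary algebraically closed field $k$, the theory of commuting-liftable pairs applied to $\Pi_K^c$ only recovers the inertia/decomposition subgroups associated to \emph{quasi-divisorial} valuations, not divisorial ones (see the discussion in \S\ref{subsec: pro-ell strategy} and \cite{Pop2010}). Your sentence ``the abelian-by-central datum encodes precisely enough to detect \ldots\ the filtration $U_v^{(1)} \subset U_v \subset K^\times$ associated to each divisorial $v$'' is precisely what is not known; over general $k$ one cannot, at present, distinguish divisorial from merely quasi-divisorial valuations group-theoretically. This distinction collapses only when $k = \overline{\F_p}$, which is why the conjecture is proved there.

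The second gap is in your global step. Even granting the local theory, one still needs to recognize the images of the \emph{rational} subfields $k(x)^\times$ inside $\widehat{K^\times}$ from the decomposition data --- this is the reconstruction of $\Rfrat(K|k)$ in the paper's language. You assert this ``permits one to recognize, intrinsically, the image \ldots\ of any $k$-rational subfield,'' but that recognition is itself the content of \cite{Pop2011}, and again is only carried out over $\overline{\F_p}$. Over arbitrary $k$ it remains open; indeed the main theorem of the present paper (Theorem~\ref{thm:Main-Theorem-A}) takes the rational quotients as \emph{additional given data} precisely because one does not yet know how to produce them from $\Gc_K^c$ alone.
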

The proof of Conjecture \ref{conjecture:bogomolov-pop-pro-ell} should also give some isomorphism-compatible \emph{group-theoretical recipe} which constructs $K^i|k$ as fields from $\Pi_K^c$ as a pro-$\ell$ group.
Also, note that the validity of Conjecture \ref{conjecture:bogomolov-pop-pro-ell} would imply that $K^i|k \cong L^i|l$ as fields if and only if $\Pi_L^c \cong \Pi_K^c$ as pro-$\ell$ groups.
While this conjecture is far from being proven in full generality, it has been settled in the case where $k$ is the \emph{algebraic closure of a finite field}, by {\sc Bogomolov-Tschinkel} \cite{Bogomolov2008a}, \cite{Bogomolov2011}, and separately by {\sc Pop} \cite{Pop2011}.

\subsection{The Pro-$\ell$ Strategy}
\label{subsec: pro-ell strategy}

For readers' sake, we summarize the general strategy for proving Conjecture \ref{conjecture:bogomolov-pop-pro-ell}, as outlined in the introduction of \cite{Pop2012a}.
We will henceforth refer to this as {\bf The Pro-$\ell$ Strategy}.

For $K|k$ as in Conjecture \ref{conjecture:bogomolov-pop-pro-ell}, we note that $\mu_{\ell^\infty} \subset K$.
After fixing an isomorphism $\Z_\ell(1) \cong \Z_\ell$ of $G_K$-modules, it follows from Kummer theory that one has an isomorphism 
\[ \Hom(\Pi_K^c,\Z_\ell) = \Hom(\Pi_K^a,\Z_\ell) \cong \widehat{K^\times}\]
where $\widehat{K^\times}$ denotes the $\ell$-adic completion of $K^\times$.
Next, note that the $\ell$-adic completion map $K^\times \rightarrow \widehat{K^\times}$ has $k^\times$ as its kernel, so that $K^\times/k^\times$ can be embedded as a subgroup of $\widehat{K^\times}$.
Using this observation, {\bf the pro-$\ell$ strategy} for reconstructing $K$ using $\Pi_K^c$ has three key steps:
\begin{enumerate}[leftmargin=*]
	\item First, determine $K^\times/k^\times$ as a subset of 
	\[ \Hom(\Pi_K^c,\Z_\ell) = \Hom(\Pi_K^a,\Z_\ell) \cong \widehat{K^\times}. \]
	Note that $K^\times/k^\times$ can be considered as the (infinite-dimensional) projective space, since it is the projectivization $\mathcal{P}_k(K)$ of $K$ as a $k$-vector space.
	\item Second, determine the projective lines on $\mathcal{P}_k(K)$.
	Since the multiplicative structure of $K^\times/k^\times$ is ``known'' from step (1), it suffices to determine the projective lines of $\mathcal{P}_k(K)$ which contain $1 \in K^\times/k^\times$.
	\item Third, apply the \emph{Fundamental Theorem of Projective Geometry} (cf. \cite{artin1957geometric} Ch. II.10) to $\mathcal{P}_k(K)$ in order to obtain the structure of $K$ as a vector space over $k$, then use the multiplicative structure of $K^\times/k^\times$ to recover the multiplicative structure of $K$.
\end{enumerate}
As with most other results in anabelian geometry, the strategy to tackle the three key steps above consists of two main parts: \emph{the local theory} and \emph{the global theory}.
Some portions of each part are known over an arbitrary algebraically closed field, but one must restrict to the algebraic closure of a finite field for everything to fit together.
The following is a brief summary of each part.
\vskip 5pt
\noindent\emph{The Pro-$\ell$ Local Theory:} Here the goal is to use $\Pi_K^c$ in order to recover information about decomposition and inertia subgroups of $\Pi_K^a$ associated to \emph{divisorial valuations} of $K|k$ (i.e. valuations arising from Weil-prime-divisors on normal models of $K|k$), as well as their Parshin-chains.
The primary tool in this context is {\sc Bogomolov} and {\sc Tschinkel's} theory of \emph{commuting-liftable pairs} in pro-$\ell$ abelian-by-central Galois groups \cite{Bogomolov2007}.
Presently, the best one can do over an arbitrary algebraically closed field is to recover the decomposition/inertia subgroups of $\Pi_K^a$ associated to \emph{quasi-divisorial valuations}; see \cite{Pop2010} for the details and for the precise definition of quasi-divisorial valuations.
Nevertheless, over the algebraic closure of a finite field, quasi-divisorial valuations are precisely the divisorial valuations.

\vskip 5pt
\noindent\emph{The Pro-$\ell$ Global Theory:} Using the local theory as an input, the global theory tackles the three key steps of the pro-$\ell$ strategy, by further using the collection of \emph{rational quotients} of $\Pi_K^a$.
This main step in the pro-$\ell$ global theory is the Main Theorem of \cite{Pop2012a}.
See loc.cit. for the details about pro-$\ell$ rational quotients, but note also that we define the mod-$\ell$ analogue of a rational quotient in \S\ref{subsec:galois-variant}.
From here, the main problem is to reconstruct these rational quotients using the given group-theoretical data.
This is accomplished over the algebraic closure of a finite field in \cite{Pop2011}.

\vskip 5pt
The present paper considers the \emph{mod-$\ell$ analogue} of the context above (see the precise notation introduced below).
Roughly speaking, this corresponds to replacing $\Pi_K^a$ with
\[ \Gc_K^a := \frac{\Gc_K}{[\Gc_K,\Gc_K] \cdot (\Gc_K)^\ell} = \frac{\Pi_K^a}{(\Pi_K^a)^\ell}. \]
Dually by Kummer theory, this corresponds to replacing $\widehat{K^\times}$ by $K^\times/\ell$.
Namely, the objects in the mod-$\ell$ context are essentially obtained by modding out the corresponding pro-$\ell$ objects by $\ell$-th powers.
In particular, \emph{the mod-$\ell$ context generalizes the pro-$\ell$ context}.

The fundamental problem in the mod-$\ell$ context is that the pro-$\ell$ strategy, using the three key steps above, fails from the very beginning.
This is precisely because $K^\times/\ell$ contains no apparent geometric object on which one can apply results concerning projective geometry over $k$.
The main goal of this paper is to overcome this fundamental difficulty, and to develop a mod-$\ell$ version of the \emph{global theory}.
We now discuss the notation/context for the mod-$\ell$ situation in detail.

\subsection{The mod-$\ell$ analogue}
\label{subsec:mod-ell-analogue}

We begin by introducing the mod-$\ell$ analogue of the notation above in order to state the mod-$\ell$ analogue of Conjecture \ref{conjecture:bogomolov-pop-pro-ell}.
We first recall the first two non-trivial terms in the mod-$\ell$ \emph{Zassenhauss} filtration of a pro-$\ell$ group $\Gc$:
\begin{enumerate}
	\item $\Gc^{(2)} := [\Gc,\Gc] \cdot \Gc^\ell$.
	\item If $\ell \neq 2$, then $\Gc^{(3)} := [\Gc,\Gc^{(2)}] \cdot \Gc^\ell$.
	\item If $\ell = 2$, then $\Gc^{(3)} := [\Gc,\Gc^{(2)}] \cdot (\Gc^{(2)})^\ell$.
\end{enumerate}
We define $\Gc^a := \Gc/\Gc^{(2)}$ and $\Gc^c := \Gc/\Gc^{(3)}$, and we note that $\Gc^c \twoheadrightarrow \Gc^a$ is a central extension.

Suppose that $K$ is a function field over an algebraically closed field $k$, and recall that $\Gc_K$ denotes the maximal pro-$\ell$ Galois group of $K$.
In this case, we will denote the kernel of $\Gc_K^c \twoheadrightarrow \Gc_K^a$ by $\Zf_K$, and we note that $\Gc_K^a$ and $\Zf_K$ are both $\ell$-elementary abelian.
We will consider both $\Gc_K^a$ and $\Zf_K$ as (additive) $\Z/\ell$-modules.
In comparison with the pro-$\ell$ context, we note that one has
\[ \frac{\Pi_K^c}{(\Pi_K^c)^{(2)}} = \frac{\Pi_K^a}{(\Pi_K^a)^{(2)}} = \Gc_K^a, \ \ \frac{\Pi_K^c}{(\Pi_K^c)^{(3)}} = \Gc_K^c.\]
Hence, our mod-$\ell$ context indeed generalizes the pro-$\ell$ context.

Similarly to the pro-$\ell$ context, for $\sigma,\tau \in \Gc_K^a$, we define $[\sigma,\tau] := \tilde\sigma^{-1}\tilde\tau^{-1}\tilde\tau\tilde\sigma$ where $\tilde\sigma,\tilde\tau \in \Gc_K^c$ are some lifts of $\sigma,\tau$.
Since $\Gc_K^c \rightarrow \Gc_K^a$ is a central extension, we again see that $[\sigma,\tau] \in \Zf_K$ doesn't depend on the choice of lifts of $\sigma,\tau$, and it is again well-known that $[\bullet,\bullet] : \Gc_K^a \times \Gc_K^a \rightarrow \Zf_K$ is $\Z/\ell$-bilinear.
Finally, we denote by $\Zf_K^0$ the (closed) subgroup of $\Zf_K$ which is generated by elements of the form $[\sigma,\tau]$ as $\sigma,\tau \in \Gc_K^a$ vary.
If $\ell \neq 2$, then it follows from the definition of the mod-$\ell$ Zassenhauss filtration that $\Zf_K^0 = \Zf_K$.
On the other hand, if $\ell = 2$, then $\Zf_K^0$ might be properly contained in $\Zf_K$.

Let $L$ be another function field over an algebraically closed field $l$.
We say that an isomorphism $\phi : \Gc_L^a \rightarrow \Gc_K^a$ is {\bf compatible with $[\bullet,\bullet]$} if there exists some isomorphism $\psi : \Zf_L^0 \rightarrow \Zf_K^0$ such that $\psi[\sigma,\tau] = [\phi\sigma,\phi\tau]$ for all $\sigma,\tau \in \Gc_L^a$. 
We define $\Isom^c(\Gc_L^a,\Gc_K^a)$ to be the collection of isomorphisms $\Gc_L^a \rightarrow \Gc_K^a$ which are compatible with $[\bullet,\bullet]$.
If $\epsilon \in (\Z/\ell)^\times$ and $\phi \in \Isom^c(\Gc_L^a,\Gc_K^a)$, then we obtain an induced isomorphism $\epsilon \cdot \phi : \Gc_L^a \rightarrow \Gc_K^a$, defined by 
\[ (\epsilon \cdot \phi)(x) = \epsilon \cdot \phi(x) = \phi(\epsilon \cdot x). \]
Since $[\bullet,\bullet]$ is $\Z/\ell$-bilinear, it follows that $\epsilon \cdot \phi$ is also an element of $\Isom^c(\Gc_L^a,\Gc_K^a)$.
Thus, we obtain a canonical action of $(\Z/\ell)^\times$ on $\Isom^c(\Gc_L^a,\Gc_K^a)$, and we denote the orbits of this action by $\UIsom^c(\Gc_L^a,\Gc_K^a)$.

Since $K \hookrightarrow K^i$ induces an isomorphism $G_{K^i} \rightarrow G_K$ of absolute Galois groups, we obtain a canonical map $\Isom^i(K,L) \rightarrow \Isom^c(\Gc_L^a,\Gc_K^a)$.
If $\Char K \neq \ell$, we therefore also obtain an induced canonical map 
\[ \Isom^i_F(K,L) \rightarrow \UIsom^c(\Gc_L^a,\Gc_K^a). \]
With this notation, we can formulate the following mod-$\ell$ analogue of Conjecture \ref{conjecture:bogomolov-pop-pro-ell}.

\begin{conjecture}
\label{conjecture:mod-ell}
Let $K|k$ and $L|l$ be function fields over algebraically closed fields such that $\Char K \neq \ell$ and $\trdeg(K|k) \geq 2$.
Then the canonical map 
\[\Isom^i_F(K,L) \rightarrow \UIsom^c(\Gc_L^a,\Gc_K^a) \]
is a bijection.
\end{conjecture}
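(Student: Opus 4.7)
The plan is to complete the mod-$\ell$ analogue of the three-step pro-$\ell$ strategy recalled in \S\ref{subsec: pro-ell strategy}, replacing the Fundamental Theorem of Projective Geometry by the Evans-Hrushovski-Gismatullin reconstruction theorem. Since $\Char K \neq \ell$ and $k$ is algebraically closed, one has $\mu_\ell \subset K$; after fixing an isomorphism $\mu_\ell \cong \Z/\ell$ of $G_K$-modules, Kummer theory gives $\Hom(\Gc_K^a,\Z/\ell) \cong K^\times/(K^\times)^\ell$. The aim is to reconstruct, in a way functorial in isomorphisms of the Galois data, the field $K^i$ together with its subfield $k$ from the pair $(\Gc_K^a,[\bullet,\bullet])$. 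This would yield surjectivity of the canonical map, while injectivity follows because $k$ is intrinsic to $K^i$ as the set of multiplicatively divisible elements and the reconstruction is canonical, so any two field isomorphisms inducing the same $\phi$ up to $(\Z/\ell)^\times$-scaling must agree up to Frobenius.

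The first step is the mod-$\ell$ local theory: use $[\bullet,\bullet] : \Gc_K^a \times \Gc_K^a \to \Zf_K^0$ to characterize commuting-liftable pairs and thereby recover the inertia and decomposition subgroups attached to quasi-divisorial valuations of $K|k$, in the spirit of Bogomolov-Tschinkel and Pop in the pro-$\ell$ case. The second, global step --- and the crux of what remains --- is to isolate, purely in terms of $(\Gc_K^a,[\bullet,\bullet])$ together with the quasi-divisorial data just produced, the collection of mod-$\ell$ rational quotients defined in \S\ref{subsec:galois-variant}. With both ingredients in hand, one is exactly in the setup of the main theorem of the present paper, which reconstructs $K^i|k$ up to a Frobenius twist from $(\Gc_K^a,[\bullet,\bullet])$ equipped with its rational quotients, provided $\trdeg(K|k) \geq 5$. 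Feeding in the recovered rational quotients therefore completes the reconstruction, and the resulting field isomorphism $K^i \to L^i$ is, by construction, the one associated to any given $\phi \in \Isom^c(\Gc_L^a,\Gc_K^a)$.

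The main obstacle is the recognition of mod-$\ell$ rational quotients from bare Galois-theoretic data. In the pro-$\ell$ setting this relies on having divisorial (rather than merely quasi-divisorial) inertia and on the projective structure of $K^\times/k^\times$; both tools are unavailable here, since $K^\times/(K^\times)^\ell$ carries no visible projective structure and only quasi-divisorial data are accessible over an arbitrary algebraically closed $k$. The natural strategy is to characterize rational quotients combinatorially, via the incidence geometry that quasi-divisorial data cut out on $K^\times/(K^\times)^\ell$, exploiting the very special configurations of cyclic subgroups coming from rational subfields $k(t) \subset K$. A secondary but also fundamental obstacle is the transcendence-degree gap: both the main theorem and its Evans-Hrushovski-Gismatullin input require $\trdeg(K|k) \geq 5$, whereas Conjecture \ref{conjecture:mod-ell} asks only $\trdeg(K|k) \geq 2$; bridging the range $\trdeg \in \{2,3,4\}$ appears to require genuinely new geometric input beyond the framework developed here.
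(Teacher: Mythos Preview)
The statement you are attempting to prove is a \emph{conjecture}, not a theorem: the paper does not prove Conjecture~\ref{conjecture:mod-ell} and explicitly says so. After stating Theorem~\ref{thm:Main-Theorem-A}, the paper writes that this theorem ``reduces Conjecture~\ref{conjecture:mod-ell}, for a function field $K|k$ of transcendence degree $\geq 5$, to reconstructing the collection $\Rfrat(K|k)$ of mod-$\ell$ rational quotients in a group-theoretical way from $\Gc_K^c$. Currently, this step remains open.'' So there is no ``paper's own proof'' to compare against.

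What you have written is not a proof but an accurate outline of the research program surrounding the conjecture, and you have correctly identified both genuine gaps. First, the recognition of mod-$\ell$ rational quotients from $(\Gc_K^a,[\bullet,\bullet])$ alone is open; the paper \emph{assumes} $\Rfrat(K|k)$ as additional input in Theorem~\ref{thm:Main-Theorem-A} precisely because nobody knows how to extract it from $\Gc_K^c$ in general (only over $\overline{\F_p}$, and only in the pro-$\ell$ setting, has Pop done this). Your sentence ``the natural strategy is to characterize rational quotients combinatorially\ldots'' is a hope, not an argument. Second, the transcendence-degree restriction $\trdeg \geq 5$ in Theorem~\ref{thm:Main-Theorem-A} comes from the Evans--Hrushovski--Gismatullin input and is not removed anywhere in the paper; the conjecture for $\trdeg \in \{2,3,4\}$ is untouched even granting the rational quotients. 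You note both of these obstacles yourself in your final paragraph, which means your own proposal acknowledges that it is not a proof.
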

The proof of Conjecture \ref{conjecture:mod-ell} should also provide an isomorphism-compatible group-theoretical recipe which constructs $K^i|k$ as fields from $\Gc_K^c$ as a pro-$\ell$ group.
Moreover, note that the validity of Conjecture \ref{conjecture:mod-ell} would imply that $K^i|k \cong L^i|l$ as fields if and only if $\Gc_L^c \cong \Gc_K^c$ as pro-$\ell$ groups.
As mentioned above, we note that Conjecture \ref{conjecture:mod-ell} can be used when considering Conjecture \ref{conjecture:bogomolov-pop-pro-ell}, because the canonical map $\Isom^i_F(K,L) \rightarrow \UIsom^c(\Gc_L^a,\Gc_K^a)$ considered in Conjecture \ref{conjecture:mod-ell} factors through the map $\Isom^i_F(K,L) \rightarrow \UIsom^c(\Pi_L^a,\Pi_K^a)$ from Conjecture \ref{conjecture:bogomolov-pop-pro-ell}.
Thus, Conjecture \ref{conjecture:mod-ell} reduces Conjecture \ref{conjecture:bogomolov-pop-pro-ell} to proving that the map
\[ \UIsom^c(\Pi_L^a,\Pi_K^a) \rightarrow \UIsom^c(\Gc_L^a,\Gc_K^a) \]
is injective, which we expect would follow from the methods in this paper.

\subsection{The Mod-$\ell$ Strategy}
\label{subsec: the mod-ell strategy}
Until now, no strategy has even been formulated to tackle Conjecture \ref{conjecture:mod-ell}.
Nevertheless, it is natural to expect that such a mod-$\ell$ strategy might involve two parts -- a \emph{local theory} and a \emph{global theory} -- similarly to the pro-$\ell$ context.
While the mod-$\ell$ local theory is essentially understood, it was the mod-$\ell$ global theory which posed a significant hurdle.
The following describes what is known in the mod-$\ell$ situation.
\vskip 5pt
\noindent\emph{The Mod-$\ell$ Local Theory:}
The mod-$\ell$ version of the local theory is understood just as well as the pro-$\ell$ local theory.
More precisely, {\sc Pop} \cite{Pop2011a} shows, using similar methods to the pro-$\ell$ version in \cite{Pop2010}, that minimized inertia/decomposition subgroups of $\Gc_K^a$ associated to Parshin-chains of quasi-divisorial valuations can be determined from the group-theoretical structure of $\Gc_K^c$.
The main difference between the two approaches is that the pro-$\ell$ version uses Bogomolov and Tschinkel's theory of commuting-liftable pairs \cite{Bogomolov2007}, while the mod-$\ell$ version requires the mod-$\ell$ version of the theory of commuting-liftable pairs which was developed by the author in \cite{Topaz2012c} and/or \cite{TopazProc2013}.
\vskip 5pt
\noindent\emph{The Mod-$\ell$ Global Theory:}
Until now, nothing was known concerning a possible mod-$\ell$ global theory.
This is primarily because there is no apparent ``geometric'' object which can be obtained from the given mod-$\ell$ Galois-theoretical data.
More precisely, an analog of the pro-$\ell$ strategy fails from the very beginning in the mod-$\ell$ context, since there is no apparent subset of $\Hom(\Gc_K^a,\Z/\ell) \cong K^\times/\ell$ on which one can hope to apply results concerning projective geometry over $k$.

\vskip 5pt
Despite this fundamental difficulty, in this paper, we develop a mod-$\ell$ \emph{global theory} for higher-dimensional function fields.
Namely, we prove the mod-$\ell$ analogue of the Main Theorem of \cite{Pop2012a}, for function fields of transcendence degree $\geq 5$.
More precisely, we show how to recover the function field $K|k$, up-to inseparable extensions, from $\Gc_K^c$ together with the \emph{rational quotients} of $\Gc_K^a$, which will be explicitly defined in \S\ref{subsec:galois-variant}.
Thus, our main theorem reduces Conjecture \ref{conjecture:mod-ell} in transcendence degree $\geq 5$ to the problem of determining these rational quotients of $\Gc_K^a$ using $\Gc_K^c$.

Our approach uses a modern \emph{analogue} of the fundamental theorem of projective geometry, which stems from the so-called ``group-configuration theorem'' in geometric stability theory.
Much like the fundamental theorem of projective geometry, which determines a field and a vector space from the associated projective space and its lines, the analogous theorem which we use here determines a field and a field extension using the so-called ``combinatorial geometry'' associated to relative algebraic closure.
This analogue of the fundamental theorem of projective geometry was originally developed by {\sc Evans-Hrushovski} \cite{Evans1991}, \cite{Evans1995} for extensions of algebraically closed fields, and was later generalized to arbitrary extensions of fields by {\sc Gismatullin} \cite{Gismatullin2008}.
Thus, the main strategy in this paper is to construct this combinatorial geometry using the given Galois group $\Gc_K^c$ endowed with the collection of rational quotients of $\Gc_K^a$.

The main reason that our main theorems require the assumption that $\trdeg \geq 5$ is because we use the results of \cite{Gismatullin2008}, which in turn uses the results of \cite{Evans1995} where this assumption originally appeared.
As mentioned above, loc.cit. uses the ``group-configuration theorem'' from geometric stability theory.
The assumption that $\trdeg \geq 5$ is required in loc.cit. for a technical step which determines which group-configurations arise from the multiplicative group resp. the additive group of $K$.
In fact, it is mentioned on the first page of \cite{Evans1995} that similar results are expected to hold true for $\trdeg \geq 3$, although this is still open.

On the other hand, all of the results of the present paper leading up to the proof of our main theorems work when $\trdeg \geq 4$, and most of them actually work for arbitrary transcendence degree; see the results in \S\ref{sec:recovering-the-mod-ell-algebraic-lattice} where we first put an explicit assumption on the transcendence degree.
We only restrict to $\trdeg \geq 5$ when we finally use \cite{Gismatullin2008}.
Similarly to {\sc Evans-Hrushovski} \cite{Evans1995}, we actually expect our results to be true for $\trdeg \geq 3$.

Finally, we note that the similarity between the fundamental theorem of projective geometry and the results of {\sc Evans-Hrushovski} and {\sc Gismatullin} has been long known to model theorists; see e.g. the MathSciNet review for \cite{Evans1995}, which was written by {\sc Pillay}.
This similarity was also recently noticed by {\sc Bogomolov-Tschinkel} \cite{zbMATH06092073}.
However, the work of {\sc Bogomolov-Tschinkel} doesn't use the results of {\sc Evans-Hrushovski} and {\sc Gismatullin} in any meaningful way, since the fundamental theorem of projective geometry always applies in their contexts.
Thus, the present paper seems to be the first to use combinatorial geometries for applications in anabelian geometry in an essential way.

\subsection{Main Theorem (Galois variant)}
\label{subsec:galois-variant}

Let $K$ be a function field over an algebraically closed field $k$ such that $\Char k \neq \ell$.
Suppose that $F$ is a subextension of $K|k$ which is algebraically closed in $K$.
Then by Kummer theory, the associated map of abelian pro-$\ell$ Galois groups $\Gc_K^a \twoheadrightarrow \Gc_F^a$ is surjective.
For a (closed) subgroup $H$ of $\Gc_K^a$, we say that the group-theoretical quotient $\pi_H : \Gc_K^a \twoheadrightarrow \Gc_K^a/H$ is a {\bf rational quotient} of $\Gc_K^a$ if $H$ is the kernel of the surjective map $\Gc_K^a \twoheadrightarrow \Gc_F^a$ associated to some relatively algebraically closed subextension $F$ of $K|k$, such that $\trdeg(F|k) = 1$ and $F = k(t)$ for some $t \in K$.
The collection of rational quotients of $\Gc_K^a$ will be denoted by $\Rfrat(K|k)$.

If $L$ is another function field over an algebraically closed field $l$ such that $\Char l \neq \ell$, we say that an isomorphism $\phi : \Gc_L^a \rightarrow \Gc_K^a$ is {\bf compatible with $\Rfrat$} if $\phi$ induces a bijection $\Rfrat(L|l) \rightarrow \Rfrat(K|k)$.
We denote by $\Isom^c_{\rm rat}(\Gc_L^a,\Gc_K^a)$ the collection of isomorphisms $\phi : \Gc_L^a \rightarrow \Gc_K^a$ which are compatible with $[\bullet,\bullet]$ and with $\Rfrat$.
Thus $\Isom^c_{\rm rat}(\Gc_L^a,\Gc_K^a)$ is a subset of $\Isom^c(\Gc_L^a,\Gc_K^a)$.
Moreover, the action of $(\Z/\ell)^\times$ on $\Isom^c(\Gc_L^a,\Gc_K^a)$ restricts to an action on $\Isom^c_{\rm rat}(\Gc_L^a,\Gc_K^a)$.
We denote by $\UIsom^c_{\rm rat}(\Gc_L^a,\Gc_K^a)$ the orbits of this action on $\Isom^c_{\rm rat}(\Gc_L^a,\Gc_K^a)$.
Furthermore, we note that the image of the canonical map $\Isom^i(K,L) \rightarrow \Isom^c(\Gc_L^a,\Gc_K^a)$ lands in the subset $\Isom^c_{\rm rat}(\Gc_L^a,\Gc_K^a)$.
Thus, we obtain an induced canonical map 
\[ \Isom^i_F(K,L) \rightarrow \UIsom^c_{\rm rat}(\Gc_L^a,\Gc_K^a). \]
We are now prepared to state the main theorem of the paper, which is precisely the mod-$\ell$ analogue of the Main Theorem from \cite{Pop2012a} in transcendence degree $\geq 5$.

\begin{maintheorem}[Galois variant]
\label{thm:Main-Theorem-A}
Let $K|k$ and $L|l$ be function fields over algebraically closed fields such that $\Char K \neq \ell$ and $\trdeg(K|k) \geq 5$.
Then the following hold:
\begin{enumerate}
	\item There is an isomorphism-compatible group-theoretical recipe which constructs $K^i|k$ as fields from $\Gc_K^c$ as a pro-$\ell$ group together with $\Rfrat(K|k)$, the collection of rational quotients of $\Gc_K^a$.
	\item If $\UIsom^c(\Gc_L^a,\Gc_K^a)$ is non-empty, then $\Char L \neq \ell$, and the canonical map 
	\[\Isom^i_F(K,L) \rightarrow \UIsom^c_{\rm rat}(\Gc_L^a,\Gc_K^a)\]
	is a bijection.
	In particular, $K|k$ and $L|l$ are isomorphic, up-to inseparable extensions, if and only if there is an isomorphism $\Gc_L^c \xrightarrow{\cong} \Gc_K^c$ such that the induced isomorphism $\Gc_L^a \xrightarrow{\cong} \Gc_K^a$ is compatible with $\Rfrat$.
\end{enumerate}
\end{maintheorem}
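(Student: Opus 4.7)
The plan is to reduce Theorem~A to the field-theoretic reconstruction statement of {\sc Evans--Hrushovski} and {\sc Gismatullin} \cite{Gismatullin2008} by building the appropriate combinatorial geometry group-theoretically. First, I would pass from the Galois side to the Kummer-theoretic side: since $\mu_\ell \subset k \subset K$, one has a canonical identification $\Hom(\Gc_K^a,\Z/\ell) \cong K^\times/\ell$, and dually a canonical identification of $\Gc_K^a$ with the $\Z/\ell$-dual of $K^\times/\ell$. Under this identification, each rational quotient $\pi_H : \Gc_K^a \twoheadrightarrow \Gc_F^a$ in $\Rfrat(K|k)$ corresponds to the inclusion $F^\times/\ell \hookrightarrow K^\times/\ell$, where $F = k(t)^{\rm rel.alg.cl.}$ is a relatively algebraically closed rational subextension. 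Simultaneously, the commutator map $[\bullet,\bullet] : \Gc_K^a \times \Gc_K^a \to \Zf_K^0$, which is precisely the structure encoded by the central extension $\Gc_K^c \to \Gc_K^a$, corresponds (via $\K_2(K)/\ell$ and Merkurjev--Suslin) to the Steinberg/Milnor-type pairing detecting when $\{f,g\}$ vanishes in $\K_2(K)/\ell$.

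Next, I would use these translated data to reconstruct the combinatorial geometry (matroid) of relative algebraic closure of $K|k$. The rational quotients give the ``points'' of this geometry: these are the one-dimensional relatively algebraically closed subextensions $k(t)^{\rm rel.alg.cl.} \subset K$. From here, the task is to characterize, purely in terms of $\Gc_K^c$ and $\Rfrat(K|k)$, when a finite configuration of such rational points is algebraically dependent, or more generally when a subgroup of $\Gc_K^a$ arises as the kernel of the restriction to a relatively algebraically closed subextension $E \subset K$. The strategy is to detect the relative algebraic closure of a composite $F_1 \cdots F_n$ from the structure of the corresponding intersection of kernels inside $\Gc_K^a$, combined with the commutator obstruction: algebraic dependence is reflected by an abnormal abundance of vanishing commutators (i.e., of Milnor symbols lying in the ``geometric'' locus). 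Carrying this out rigorously will take place in the section on recovering the mod-$\ell$ algebraic lattice, and yields the full lattice of relatively algebraically closed subextensions of $K|k$ as a group-theoretic object, together with the distinguished class of rational points.

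Once the combinatorial geometry of $K|k$ is available as a group-theoretic invariant of $(\Gc_K^c,\Rfrat(K|k))$, I would invoke the main theorem of {\sc Gismatullin} \cite{Gismatullin2008} (building on {\sc Evans--Hrushovski} \cite{Evans1995}), which reconstructs $K|k$ up to purely inseparable extension from its combinatorial geometry — provided $\trdeg(K|k) \geq 5$, which is exactly where this hypothesis enters. This gives the functorial field-theoretic model $K^i|k$ demanded in part (1). For part (2), I would trace through every step of the recipe and verify that it is preserved by any $\phi \in \Isom^c_{\rm rat}(\Gc_L^a,\Gc_K^a)$: the Kummer dictionary is natural, the commutator pairing is preserved by the definition of $\Isom^c$, and the distinguished class of rational subspaces is preserved by the definition of $\Isom^c_{\rm rat}$. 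The $(\Z/\ell)^\times$-ambiguity corresponds exactly to the choice of identification $\mu_\ell \cong \Z/\ell$, which in positive characteristic is precisely the ambiguity mod Frobenius, explaining why one recovers a bijection with $\Isom^i_F(K,L)$ after passing to $\UIsom^c_{\rm rat}$. The statement $\Char L \neq \ell$ in the hypothesis follows because the existence of an isomorphism $\Gc_L^a \cong \Gc_K^a$ preserving the bracket forces $L$ to contain $\mu_\ell$ and hence $\Char L \neq \ell$.

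The main obstacle is the middle step — reconstructing the combinatorial geometry, and in particular characterizing algebraic dependence of rational subextensions purely through the commutator structure of $\Gc_K^c$. This is precisely where the mod-$\ell$ context departs most sharply from the pro-$\ell$ context of \cite{Pop2012a}: one no longer has access to $K^\times/k^\times$ as a subset of the Galois-dual space, so the projectivization $\Pc_k(K)$ is unavailable and the fundamental theorem of projective geometry cannot be applied. Instead one must extract geometric dependence from the subtler data of which pairs of elements of $\Gc_K^a$ lift to commuting pairs in $\Gc_K^c$, combined with the constraints coming from the collection $\Rfrat(K|k)$. I expect the core technical work to consist of recognizing which subgroups of $\Gc_K^a$ are ``geometric'' — i.e. come from relatively algebraically closed subextensions — via commuting-liftable criteria and the compatibility of commutators across the given rational quotients, so as to produce the algebraic lattice of $K|k$ as a purely group-theoretic invariant of $(\Gc_K^c, \Rfrat(K|k))$.
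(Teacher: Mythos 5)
Your high-level plan matches the paper's: translate the data $(\Gc_K^c,\Rfrat(K|k))$ through Kummer theory and Merkurjev--Suslin into the mod-$\ell$ Milnor K-ring data, reconstruct the lattice of relatively algebraically closed subextensions and hence the combinatorial geometry $\G(K|k)$, and invoke {\sc Gismatullin}/{\sc Evans--Hrushovski}. In effect the paper formalizes this as ``reduce Theorem~A to the Milnor variant Theorem~B, via Theorem~\ref{thm:gal-to-milnor}.'' However, there are three places where your outline diverges from the paper or makes an incorrect claim.

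First, your proposed mechanism for the middle step --- recognizing the ``geometric'' subgroups via \emph{commuting-liftable criteria} --- is \emph{not} what the paper does for the global theory. Commuting-liftable pairs belong to the \emph{local} theory (recovering inertia/decomposition of quasi-divisorial valuations) and are deliberately not used here. The paper instead defines a numerical invariant, the Milnor dimension $\dimm_K(S)$ of a subset $S \subset \k_1(K)$ (the largest $r$ with a nonvanishing product $\{s_1,\ldots,s_r\}$), and uses the vanishing/nonvanishing results of \S\ref{sec:milnor-k-thy-of-function-fields} (in particular Proposition~\ref{prop:a1-ar-z-non-zero}, the key technical input) to prove Propositions~\ref{prop:recovering-higher-dimensions} and~\ref{prop:recovering-points}, which pin down $\Gf^r(K|k)$ for $r \geq 2$ and then $\Gf^1(K|k)$. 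Your phrase ``abnormal abundance of vanishing commutators'' gestures at something in this vicinity, but the paper's actual device is the opposite orientation: it tracks \emph{non}vanishing of symbols (hence nonvanishing commutators under duality) to read off transcendence degree. Without making this precise, this is the genuine gap in your outline, since everything rests on it.

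Second, your claim that the $(\Z/\ell)^\times$-ambiguity in choosing $\mu_\ell \cong \Z/\ell$ ``is precisely the ambiguity mod Frobenius'' is wrong. These are two independent ambiguities. The $(\Z/\ell)^\times$ quotient comes from the choice of root of unity and exists even in characteristic zero (where there is no Frobenius); it is absorbed by passing to $\UIsom$. The Frobenius ambiguity is separate, arising because the combinatorial geometry $\G(K|k)$ cannot distinguish $K^i$ from its Frobenius twists; it is absorbed by the target $\Isom^i_F(K,L)$ and is part of the statement of {\sc Gismatullin}'s Theorem~\ref{thm:fundamental-theorem-of-field-theoretic-cl-geometries}(2).

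Third, your argument for $\Char L \neq \ell$ is circular: since $l$ is algebraically closed, $L$ contains $\mu_\ell$ \emph{if and only if} $\Char L \neq \ell$, so ``forces $L$ to contain $\mu_\ell$ and hence $\Char L \neq \ell$'' explains nothing. The paper's argument: if $\Char L = \ell$ then $\Gc_L$ is a free pro-$\ell$ group, so $\H^2(\Gc_L)=0$, hence (by the exact sequence~\eqref{align:LHS} together with Lemma~\ref{lemma:pre-image-of-H2-dec} and Fact~\ref{fact: cup dual to commutator}) the commutator map $\widehat\wedge^2(\Gc_L^a) \to \Zf_L$ is injective; transferring this to $K$ via the given isomorphism and applying Theorem~\ref{thm:gal-to-milnor} forces $\k_2(K)=0$, contradicting Lemma~\ref{lemma:independent-coords}.
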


In particular, Theorem \ref{thm:Main-Theorem-A} reduces Conjecture \ref{conjecture:mod-ell}, for a function field $K|k$ of transcendence degree $\geq 5$, to reconstructing the collection $\Rfrat(K|k)$ of mod-$\ell$ rational quotients in a group-theoretical way from $\Gc_K^c$.
Currently, this step remains open.
Nevertheless, we expect that this reconstruction is indeed possible, based on the fact that, \emph{over the algebraic closure of a finite field}, {\sc Pop} \cite{Pop2011} reconstructs the \emph{pro-$\ell$ rational quotients} using $\Pi_K^c$.

\subsection{Main Theorem (Minor variant)}
\label{subsec:milnor-variant}

The majority of the paper is devoted to proving an analogue of Theorem \ref{thm:Main-Theorem-A} which deals with $\k_*(K)$, the mod-$\ell$ Milnor K-ring of $K$, instead of $\Gc_K^c$.
The definition of $\k_*(K)$ is reviewed in \S\ref{sec:milnor-K-thy}, although we recall now that $\k_1(K) = K^\times/\ell$.

It is important to note that results which reconstruct function fields using Milnor K-theory have been previously developed by {\sc Bogomolov-Tschinkel} \cite{MR2537087}.
The main difference between the results of loc.cit. and our mod-$\ell$ context, is that \cite{MR2537087} works with the \emph{Milnor K-ring modulo divisible elements}, denoted by $\bK_*(K)$.
On the other hand, Theorem \ref{thm:Main-Theorem-A-milnor} below deals with the \emph{mod-$\ell$ Milnor K-ring $\k_*(K)$}, which is a fairly small quotient of $\bK_*(K)$; more precisely, one has $\k_*(K) = \bK_*(K)/\ell$.

For $K$ a function field over an algebraically closed field $k$, we note that one has 
\[ \bK_1(K) = K^\times/k^\times.\]
In particular, the set $K^\times/k^\times$ underlying the projective space $\Pc_k(K)$ is given.
With this observation, the strategy in \cite{MR2537087} is to first recover the collection of projective lines, and to ultimately apply the fundamental theorem of projective geometry on $\Pc_k(K)$.
It is therefore unlikely that a similar strategy will work when replacing $\bK_*(K)$ with $\k_*(K)$ as we have done in Theorem \ref{thm:Main-Theorem-A-milnor} below, similarly to the reason that the pro-$\ell$ strategy doesn't apply in the mod-$\ell$ context (see \S\ref{subsec: the mod-ell strategy}).

To state the ``Milnor Variant'' of our main theorem, we first recall that Kummer theory yields a canonical perfect pairing
\[ \Gc_K^a \times \k_1(K) \rightarrow \mu_\ell.\]
In the case where $K$ is a function field over an algebraically closed field $k$ such that $\Char k \neq \ell$, we say that a subgroup $A$ of $\k_1(K)$ is a {\bf rational subgroup} if there exists a subextension $F$ of $K|k$ such that $F$ is algebraically closed in $K$, $\trdeg(F|k) = 1$, $F = k(t)$ for some $t \in K$, and $A$ is the image of the canonical map $F^\times \rightarrow \k_1(K)$.
The collection of all rational subgroups of $\k_1(K)$ will be denoted by $\Gfor(K|k)$.
Furthermore, note that the Kummer pairing above yields a one-to-one correspondence between $\Rfrat(K|K)$ and $\Gfor(K|k)$, since rational quotients of $\Gc_K^a$ are Kummer-dual to rational subgroups of $\k_1(K)$.

If $L$ is another function field over an algebraically closed field $l$ such that $\Char l \neq \ell$, then we say that an isomorphism $\phi : \k_1(K) \rightarrow \k_1(L)$ is {\bf compatible with $\k_2$} if the induced map $\phi^{\otimes 2} : \k_1(K)^{\otimes 2} \rightarrow \k_1(L)^{\otimes 2}$ descends to an isomorphism $\k_2(K) \rightarrow \k_2(L)$.
We denote by $\Isomm(\k_1(K),\k_1(L))$ the collection of isomorphisms $\phi : \k_1(K) \rightarrow \k_1(L)$ which are compatible with $\k_2$.
Since the inclusion $K \hookrightarrow K^i$ induces an isomorphism of mod-$\ell$ Milnor K-rings $\k_*(K) \xrightarrow{\cong} \k_*(K^i)$ (see Fact \ref{fact:inseperable-milnor-isomorphism}), we obtain a canonical map $\Isom^i(K,L) \rightarrow \Isomm(\k_1(K),\k_1(L))$.

We say that an isomorphism $\phi : \k_1(K) \rightarrow \k_1(L)$ is {\bf compatible with $\Gfor$} if $\phi$ induces a bijection $\Gfor(K|k) \rightarrow \Gfor(L|l)$.
We denote by $\Isomm_{\rm rat}(\k_1(K),\k_1(L))$ the collection of isomorphisms $\phi : \k_1(K) \rightarrow \k_1(L)$ which are compatible with $\k_2$ and with $\Gfor$.
In particular, $\Isomm_{\rm rat}(\k_1(K),\k_1(L))$ is a subset of $\Isomm(\k_1(K),\k_1(L))$.
Furthermore, it's easy to see that the image of the canonical map $\Isom^i(K,L) \rightarrow \Isomm(\k_1(K),\k_1(L))$ is contained in $\Isomm_{\rm rat}(\k_1(K),\k_1(L))$.

If $\phi \in \Isomm_{\rm rat}(\k_1(K),\k_1(L))$ and $\epsilon \in (\Z/\ell)^\times$, then we obtain an induced isomorphism $\epsilon \cdot \phi : \k_1(K) \rightarrow \k_1(L)$, defined by 
\[ (\epsilon \cdot \phi)(x) = \phi(x)^\epsilon = \phi(x^\epsilon), \]
where we denote $\k_1(K) = K^\times/\ell$ resp. $\k_1(L) = L^\times/\ell$ multiplicatively.
Since $\k_*(K)$ and $\k_*(L)$ are $\Z/\ell$-algebras, it follows that $\epsilon \cdot \phi$ is also an element of $\Isomm_{\rm rat}(\k_1(K),\k_1(L))$.
Thus, we have a canonical action of $(\Z/\ell)^\times$ on $\Isomm_{\rm rat}(\k_1(K),\k_1(L))$ whose orbits we denote by $\UIsomm_{\rm rat}(\k_1(K),\k_1(L))$.
In particular, we obtain an induced map 
\[ \Isom^i_F(K,L) \rightarrow \UIsomm_{\rm rat}(\k_1(K),\k_1(L)) \]
which is the mod-$\ell$ Milnor K-ring analogue of the map $\Isom^i_F(K,L) \rightarrow \UIsom^c_{\rm rat}(\Gc_L^a,\Gc_K^a)$ considered in Theorem \ref{thm:Main-Theorem-A}.
The main goal of the paper is to prove the following ``Milnor variant'' of Theorem \ref{thm:Main-Theorem-A}.
\begin{maintheorem}[Milnor variant]
\label{thm:Main-Theorem-A-milnor}
Let $K|k$ and $L|l$ be function fields over algebraically closed fields, such that $\Char K \neq \ell$, $\Char L \neq \ell$, and $\trdeg(K|k) \geq 5$.
Then the following hold:
\begin{enumerate}
	\item There is an isomorphism-compatible group-theoretical recipe which constructs $K^i|k$ as fields from the following data $\Kcalm(K|k)$:
	\begin{itemize}
		\item The groups $\k_1(K)$ and $\k_2(K)$.
		\item The multiplication map $\k_1(K) \otimes \k_1(K) \rightarrow \k_2(K)$.
		\item The collection $\Gfor(K|k)$ of rational subgroups of $\k_1(K)$.
	\end{itemize}
	\item The canonical map 
	\[\Isom^i_F(K,L) \rightarrow \UIsomm_{\rm rat}(\k_1(K),\k_1(L)) \]
	is a bijection.
	In particular, $K|k$ and $L|l$ are isomorphic, up-to inseparable extensions, if and only if there is an isomorphism $\k_1(K) \xrightarrow{\cong} \k_1(L)$ which is compatible with $\k_2$ and with $\Gfor$.
\end{enumerate}
\end{maintheorem}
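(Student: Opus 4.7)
The plan is to reconstruct, from the data $\Kcalm(K|k)$, the combinatorial pregeometry on $K$ whose closure operator is relative algebraic closure over $k$, and then to invoke Gismatullin's theorem (the generalization of Evans--Hrushovski to arbitrary field extensions) to recover $K^i|k$ as a field extension. The transcendence degree $\geq 5$ hypothesis is exactly what Gismatullin's theorem requires, so the bulk of the work is the Milnor-K-theoretic reconstruction of the pregeometry; this is the content of the sections devoted to recovering the mod-$\ell$ algebraic lattice.

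\emph{Step 1 (detecting dependence via $\k_2$).} For each rational subgroup $A \in \Gfor(K|k)$, arising from a relatively algebraically closed subfield $F = k(t) \hookrightarrow K$, the Milnor symbol $\k_1(K) \otimes \k_1(K) \to \k_2(K)$ restricted to $A \otimes A$ detects multiplicative dependence modulo $\ell$-th powers. Combined with the known description of $\k_2$ of a rational function field via tame symbols, this encodes inside $A$ the projective-line incidence structure of $\mathcal{P}_k(F) = F^\times/k^\times$, even though the image of $k^\times$ in $\k_1(K) = K^\times/\ell$ is trivial. This is the mod-$\ell$ Milnor-K-theoretic counterpart of the $\bK_*$-theoretic line-recognition step of Bogomolov--Tschinkel.

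\emph{Step 2 (the algebraic lattice).} I then would promote this one-dimensional picture to higher dimensions by characterizing, in terms of $\Kcalm(K|k)$, those subgroups $B \subset \k_1(K)$ of the form $\k_1(F')$ for $F'$ a relatively algebraically closed subextension of $K|k$ of transcendence degree $n$. The expected recipe is to realize such $B$ as appropriate joins of rational subgroups, cut out by $\k_2$-orthogonality against complementary families built from $\Gfor$. Iterating this produces a lattice of subgroups of $\k_1(K)$ isomorphic to the lattice of relatively algebraically closed subextensions of $K|k$, with transcendence degree recovered as a rank function. Restricting to the transcendence-one layer and ordering by containment yields the combinatorial pregeometry of relative algebraic closure on $K$ over $k$ --- the mod-$\ell$ algebraic lattice.

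\emph{Step 3 (applying Gismatullin and the Isom statement).} Feeding this pregeometry into Gismatullin's theorem reconstructs $K^i|k$ as a field extension, proving (1). Applied functorially to an isomorphism $\phi \in \Isomm_{\rm rat}(\k_1(K),\k_1(L))$, which by definition preserves all ingredients of Steps 1 and 2, the construction yields a field isomorphism $\Phi : K^i \to L^i$, and one checks that $\Phi$ induces $\phi$ under Kummer theory. The $(\Z/\ell)^\times$-scaling ambiguity in $\phi$ corresponds to the choice of identification $\mu_\ell \cong \Z/\ell$ implicit in the Kummer pairing, together with the Frobenius orbit on the field side in positive characteristic; modding out on both sides produces the required inverse to the canonical map $\Isom^i_F(K,L) \to \UIsomm_{\rm rat}(\k_1(K),\k_1(L))$, establishing bijectivity in (2).

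The main obstacle is Step 2. Unlike in the pro-$\ell$ setting, or in the Bogomolov--Tschinkel setup built on $\bK_*(K)$, we do not have direct access to the projective space $K^\times/k^\times$, so the fundamental theorem of projective geometry is unavailable and cannot be used to bootstrap line-structure into field-structure. Instead, one has to use the rational quotients together with the symbol map to $\k_2$ to characterize the \emph{entire} lattice of relatively algebraically closed subextensions before any geometry-reconstruction theorem can be invoked, and then replace the fundamental theorem of projective geometry with Gismatullin's combinatorial-geometry theorem. This swap of reconstruction tools is precisely the global-theory gap the paper fills.
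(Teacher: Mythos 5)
Your Step 1 is based on a mechanism that cannot work, and this is precisely the obstruction the paper is designed to route around. If $A = \Omega_K(F)$ with $F = k(t)$ relatively algebraically closed, then any $a_1, a_2 \in A$ are represented by elements of $F^\times$, so $\{a_1, a_2\} \in \k_2(K)$ lies in the image of $\k_2(F) \to \k_2(K)$; but $\k_2(F) = 0$ by Lemma~\ref{lemma:milnor-dimension-of-function-fields} since $\trdeg(F|k) = 1$. The symbol map restricted to $A \otimes A$ is therefore \emph{identically zero}, and it encodes no incidence structure at all on $\Pc_k(F)$. This is exactly the failure of the Bogomolov--Tschinkel $\bK_*$-strategy that the paper emphasizes in \S\ref{subsec: the mod-ell strategy} and \S\ref{subsec:milnor-variant}. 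The actual proof never attempts to recover a projective-line structure inside a single rational subgroup; instead it builds the lattice $\Gf^*(K|k)$ from the outside, first recovering the layers $\Gf^r(K|k)$ for $r \geq 2$ via maximality with respect to Milnor dimension among subgroups generated by families of rational subgroups (Proposition~\ref{prop:recovering-higher-dimensions}), and then pinning down $\Gf^1(K|k)$ by an incidence configuration among elements of $\Gfor(K|k)$, $\Gf^2(K|k)$, and $\Gf^3(K|k)$ (Proposition~\ref{prop:recovering-points}). So the correct picture runs top-down on dimension, not bottom-up from lines.

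Your Step 3 also has a genuine gap in the proof of part (2). The lattice-and-Gismatullin construction gives a map $\UIsomm_{\rm rat}(\k_1(K),\k_1(L)) \to \Isom^i_F(K,L)$, and that this is a \emph{left} inverse of the canonical map is indeed immediate from functoriality. But you then assert without argument that ``one checks that $\Phi$ induces $\phi$ under Kummer theory,'' i.e.\ that it is also a right inverse. That is the substantive injectivity claim: one must show that an automorphism $\Phi$ of $\k_1(K)$, compatible with $\k_2$ and $\Gfor$, which fixes every element of $\Gf^1(K|k)$ as a subgroup of $\k_1(K)$, must be multiplication by a single scalar $\epsilon \in (\Z/\ell)^\times$. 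Nothing in Gismatullin's theorem or in the lattice reconstruction forces this. The paper proves it by a separate argument: it shows $\Phi$ fixes the unit groups $\Uf_v$ of divisorial valuations (Lemma~\ref{lemma:units-generated-by-subfields}), uses the divisor exact sequence on a projective line to get a scalar on each ``good'' rational subgroup (Lemmas~\ref{lemma:reduction-divisors-to-curves}--\ref{lemma:restriction-to-rational-is-epsilon}), and then uses a sharpened Bertini statement for very general elements (Proposition~\ref{prop:very-general-bertini}) to glue these scalars into a single $\epsilon$ on all of $\k_1(K)$. Without an argument of this kind, bijectivity in (2) is not established.
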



\subsection{A guide through the paper}
\label{sub:guide-through-the-paper}

The following is a description of the various sections of this paper, and a summary of the proofs of the main theorems:
\begin{enumerate}[leftmargin=*]
	\item In \S\ref{sec:milnor-K-thy}, we recall the definition of the Milnor K-ring $\K_*(K)$ of a field $K$ as well as its mod-$\ell$ version $\k_*(K)$.
	We also recall some basic properties of the Milnor K-ring, especially with respect to \emph{tame symbols}.
	\item In \S\ref{sec:milnor-k-thy-of-function-fields}, we prove some vanishing/non-vanishing results in mod-$\ell$ Milnor K-theory of a function field $K$ over an algebraically closed field $k$.
	The ``vanishing'' result states that $\k_s(K) = 0$ whenever $s > \trdeg(K|k) =: d$.
	This follows from the fact that $K$ has $\ell$-cohomological dimension $d$, combined with the Bloch-Kato conjecture, which is now a theorem of {\sc Voevodsky-Rost} et al. \cite{Voevodsky2011}, \cite{Rost-chain-lemma}, \cite{zbMATH05594008}.
	On the other hand, the ``non-vanishing'' results assert that there are ``many'' elements $x_1,\ldots,x_r \in \k_1(K)$ such that $\{x_1,\ldots,x_r\} \neq 0$ in $\k_r(K)$, for $r \leq d$, and that these elements can generally be found in relatively algebraically closed subextensions of $K|k$.
	\item In \S\ref{sec:graded_lattices} we recall the definition of a \emph{graded lattice} and give the main example considered in this paper: the graded lattice $\G^*(K|k)$ associated to relative algebraic closure in the field extension $K|k$.
	In this section we also use the results of \S\ref{sec:milnor-k-thy-of-function-fields} to prove that an isomorphic copy of this graded lattice, denoted $\Gf^*(K|k)$, can be found inside the lattice of subgroups of $K^\times/\ell = \k_1(K)$, in the context of the main theorems.
	\item The key propositions required to prove the main theorems are found in \S\ref{sec:recovering-the-mod-ell-algebraic-lattice}.
	Namely, under the assumptions/notation of our main theorems, we show how to reconstruct $\Gf^*(K|k)$, as a subset of the lattice of subgroups of $\k_1(K)$, using $\k_*(K)$ along with $\Gfor(K|k)$.
	\item In \S\ref{section:combinatorial-geometries}, we recall the standard construction which produces a set with a closure operation from a graded lattice.
	When applied to the graded lattice $\G^*(K|k)$, this construction produces the combinatorial geometry $\G(K|k)$ which was considered in \cite{Evans1991}, \cite{Evans1995}, and \cite{Gismatullin2008}.
	In this section we also recall the main result of loc.cit. which reconstructs $K^i|k$ using the combinatorial geometry $\G(K|k)$; this theorem can be seen as the appropriate analogue of the fundamental theorem of projective geometry which applies in this situation. 
	\item Theorem \ref{thm:Main-Theorem-A-milnor} is proved in \S\ref{section:proof-of-theorem-milnor}.
	Some extra work is required to prove the bijectivity of the map $\Isom^i_F(K,L) \rightarrow \UIsomm_{\rm rat}(\k_1(K),\k_1(L))$ considered in Theorem \ref{thm:Main-Theorem-A-milnor}(2).
	\item Finally, in \S\ref{sec:zassenhauss}, we recall the cohomological framework that allows us to translate between Theorems \ref{thm:Main-Theorem-A} and \ref{thm:Main-Theorem-A-milnor}.
	These cohomological results can be seen as a group-theoretical reformulation of the Merkurjev-Suslin Theorem \cite{Merkurjev1982}.
	We conclude the paper by completing the proof of Theorem \ref{thm:Main-Theorem-A} in \S\ref{sec:proof-of-theorem-galois}.
\end{enumerate}

\subsection*{Acknowledgments}
The author warmly thank Florian Pop and Thomas Scanlon for numerous technical discussions concerning the topics in this paper.
The author also thanks Martin Hils and James Freitag for several helpful discussions.
The manuscript was written during the MSRI semester on Model Theory, Arithmetic Geometry and Number theory in the spring of 2014.
The author thanks MSRI and the organizers of this semester for their hospitality and for an excellent research environment.
The author also thanks the referee for his/her extremely useful comments which helped improve the paper in many ways.

\section{Milnor K-theory}
\label{sec:milnor-K-thy}

Let $K$ be an arbitrary field.
We recall that the {\bf $n$-th Milnor K-group} of $K$ is defined as follows:
\[ \K_n(K) := \frac{(K^\times)^{\otimes n}}{\langle a_1\otimes\cdots\otimes a_n \ : \ \exists i < j, \ a_i + a_j = 1 \rangle}.\]
The tensor product makes $\K_*(K) := \bigoplus_{n \geq 0} \K_n(K)$ into a graded-commutative ring with $\K_0(K) = \Z$, and we denote the product in this ring by $\{\bullet,\ldots,\bullet\}$.

We will use the fairly standard notation $\k_n(K) := \K_n(K)/\ell$ for the {\bf mod-$\ell$ Milnor K-groups} of $K$.
Note that the tensor product also makes $\k_*(K) := \bigoplus_{n \geq 0} \k_n(K)$ into a graded-commutative ring with $\k_0(K) = \Z/\ell$.
We will abuse the notation and denote the product in $\k_*(K)$ also by $\{\bullet,\ldots,\bullet\}$.

It is clear from the definition that both $\K_*(\bullet)$ and $\k_*(\bullet)$ are functors.
For an embedding of fields $\iota : K \hookrightarrow L$, we denote by $\iota_*$ the induced homomorphism $\K_*(K) \rightarrow \K_*(L)$ and/or $\k_*(K) \rightarrow \k_*(L)$.

If $L|K$ is a finite extension, we recall that we have a homomorphism $\operatorname{N}_{L|K} : \K_*(L) \rightarrow \K_*(K)$ of abelian groups called the {\bf norm}, and that this map agrees with the usual field norm $\operatorname{N}_{L|K} : L^\times \rightarrow K^\times$ in degree $1$.
The main property of this map which we will use is that the composition $\operatorname{N}_{L|K} \circ \iota_* : \K_*(K) \rightarrow \K_*(K)$ is precisely multiplication by $[L:K]$ (cf. \cite{Gille2006} Remark 7.3.1).
Using this, we obtain the following fact which will be useful in reducing several of our arguments involving finite extensions to the separable case.
\begin{fact}
\label{fact:inseperable-milnor-isomorphism}
Suppose that $\iota: K \hookrightarrow L$ is a purely inseparable finite extension of fields of characteristic different from $\ell$.
Then the canonical map $\iota_* : \k_*(K) \rightarrow \k_*(L)$ is an isomorphism.
\end{fact}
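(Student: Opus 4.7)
The plan is to deduce the isomorphism from the key formula $\operatorname{N}_{L|K} \circ \iota_* = [L:K]$ stated in the paragraph preceding the fact, combined with the observation that purely inseparable extensions are ``divisible enough'' in degree $1$.

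For injectivity, I would argue as follows. Since $L|K$ is purely inseparable and $\Char K = p \neq \ell$, we have $[L:K] = p^n$ for some $n \geq 0$, and hence $[L:K]$ acts invertibly on the $\Z/\ell$-module $\k_*(K)$. The composition $\operatorname{N}_{L|K} \circ \iota_*$ is multiplication by $[L:K]$, so after reducing mod $\ell$ this composition is an automorphism of $\k_*(K)$. In particular $\iota_* : \k_*(K) \to \k_*(L)$ is injective.

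For surjectivity, the central observation is that every $a \in L^\times$ has some power $a^{p^m}$ lying in $K^\times$. In $\K_1(L)$ this gives
\[ p^m \cdot \{a\} = \{a^{p^m}\} = \iota_*(\{a^{p^m}\}).\]
Reducing mod $\ell$ and using that $p$ is a unit in $\Z/\ell$, I can divide by $p^m$ to conclude that $\{a\} \in \iota_*\k_1(K)$; thus $\iota_* : \k_1(K) \to \k_1(L)$ is already surjective. Because $\iota_*$ is a graded ring homomorphism and $\k_*(L)$ is generated as a ring over $\k_0(L) = \Z/\ell = \iota_*\k_0(K)$ by $\k_1(L)$, surjectivity on $\k_1$ propagates to every degree.

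No step here should be a serious obstacle: the only mild subtlety is ensuring that the identity $\operatorname{N}_{L|K} \circ \iota_* = [L:K]$ is applied to the mod-$\ell$ quotient (which is legitimate because the norm and the inclusion-induced map both descend to $\k_*$), and that one handles the characteristic $0$ case by noting $L = K$ so the statement is trivial. Together, injectivity and surjectivity give that $\iota_* : \k_*(K) \to \k_*(L)$ is an isomorphism of graded $\Z/\ell$-algebras.
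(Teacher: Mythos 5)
Your injectivity argument is identical to the paper's: $\operatorname{N}_{L|K}\circ\iota_*$ is multiplication by $[L:K]$, a unit in $\Z/\ell$. Your surjectivity argument is genuinely more direct than the paper's. The paper embeds $L$ into $K^{1/p^m}$, applies the norm argument again to every finite purely inseparable subextension to show $\k_*(L)\to\k_*(K^{1/p^m})$ is injective, invokes the Frobenius isomorphism $K^{1/p^m}\cong K$ to conclude $\k_*(K)\to\k_*(K^{1/p^m})$ is an isomorphism, and then extracts surjectivity of $\k_*(K)\to\k_*(L)$ from the factorization through $L$. You instead note that each $a\in L^\times$ satisfies $a^{p^m}\in K^\times$, so $p^m\{a\}=\iota_*\{a^{p^m}\}$ in $\k_1(L)$, and invertibility of $p$ in $\Z/\ell$ gives $\{a\}\in\iota_*\k_1(K)$; surjectivity in every degree then follows because $\iota_*$ is a graded ring homomorphism and $\k_*(L)$ is generated over $\Z/\ell$ by $\k_1(L)$. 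This avoids the auxiliary field $K^{1/p^m}$ and the finite-subextension reduction, and is correct as stated (including the observation that in characteristic zero $L=K$ and there is nothing to prove). The two proofs rest on the same underlying mechanisms (transfer/norm for injectivity, Frobenius and invertibility of $p$ mod $\ell$ for surjectivity); yours is just the more streamlined packaging.
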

\begin{proof}
Let $p := \Char K$ and note that $\ell$ doesn't divide $[L:K]$.
From this we see that $\iota_*$ is injective, since $\operatorname{N}_{L|K} \circ \iota_*$ is multiplication by $[L:K]$ which is invertible in $\Z/\ell$.
Moreover, since $L|K$ is finite and purely inseparable, there exists some $m \geq 0$ such that $L \subset K^{1/p^m}$.
If $\eta \in \ker(\k_*(L) \rightarrow \k_*(K^{1/p^m}))$, then there exists a finite subextension $L'$ of $K^{1/p^m}|L$ such that $\eta \in \ker(\k_*(L) \rightarrow \k_*(L'))$.
Since $L'|L$ is purely inseparable, the argument above implies that $\eta = 0$.
Namely, the canonical map $\k_*(L) \rightarrow \k_*(K^{1/p^m})$ is injective as well.

On the other hand, the canonical map $\k_*(K) \rightarrow \k_*(K^{1/p^m})$ is an \emph{isomorphism} since $p$ is invertible in $\Z/\ell$.
Finally, the inclusion $K \hookrightarrow K^{1/p^m}$ factors through $L \hookrightarrow K^{1/p^m}$, and it follows that the map $\iota_* : \k_*(K) \rightarrow \k_*(L)$ is surjective.
We conclude that $\iota_* : \k_*(K) \rightarrow \k_*(L)$ is indeed an isomorphism.
\end{proof}

\subsection{Milnor dimension} 
\label{sub:milnor_dimension}

Let $K$ be an arbitrary field and let $S$ be a subset of $\k_1(K)$.
Define the {\bf mod-$\ell$ Milnor dimension} of $S$ in $\k_*(K)$, denoted $\dimm_K(S)$, to be the unique element of $\{0,1,\ldots,\infty\}$ which satisfies the following property: For all positive integers $d$, one has $\dimm_K(S) \geq d$ if and only if there exist $s_1,\ldots,s_d \in S$ such that $\{s_1,\ldots,s_d\} \neq 0$ in $\k_d(K)$.

For a non-negative integer $r \geq 0$, we define $\Mf^r(K)$ to be the (possibly empty) collection of subgroups of $\k_1(K)$ which are \emph{maximal} among subgroups $B$ of $\k_1(K)$ such that $\dimm_K(B) = r$.
In particular, the distinct elements of $\Mf^r(K)$ are non-comparable as subgroups of $\k_1(K)$.

By Zorn's lemma, if there exists some subgroup $A_0$ of $\k_1(K)$ such that $\dimm_K(A_0) = r$, then $\Mf^r(K)$ is non-empty.
Indeed, suppose that $(A_i)_i$ is a chain of subgroups of $\k_1(K)$ such that $\dimm_K(A_i) = r$ for all $i$, and let $A = \bigcup_i A_i$ be the union of the $A_i$ in $\k_1(K)$.
Then it easily follows from the definition that $A$ is also a subgroup of $\k_1(K)$ such that $\dimm_K(A) = r$.

\subsection{Valued fields and tame symbols}
\label{subsec:tame-symbols}

Let $(K,v)$ be a valued field.
We will denote the associated value group by $vK$, the residue field by $Kv$, the valuation ring by $\Oc_v$, and the valuation ideal by $\mf_v$.
In certain situations where we wish to keep $K$ out of the notation, we will use the notation $\Gamma_v$ for the value group of $v$ and/or $\kappa(v)$ for the residue field.
Moreover, we will generally denote the canonical map $\Oc_v \rightarrow Kv = \kappa(v)$ by $x \mapsto \bar x$.
Lastly, the group $\Oc_v^\times$ of $v$-units will be denoted by $U_v$.

Recall that $(K,v)$ is called a {\bf discretely valued field} (and/or $v$ is a {\bf discrete valuation}) provided that $v$ is a valuation of $K$ such that $vK = \Z$.
If $(K,v)$ is such a discretely valued field, we obtain a homomorphism $\{\bullet\}_v : \K_{n+1}(K) \rightarrow \K_n(Kv)$ for each $n$, called the {\bf tame symbol associated to $v$}, which is uniquely defined by the condition (cf. \cite{Gille2006} Proposition 7.1.4):
\begin{align}
\label{align:tame-definition}
 \{\pi,u_1,\ldots,u_n\}_v = \{\bar u_1,\ldots,\bar u_n\}
\end{align}
for any element $\pi \in K^\times$ such that $v(\pi) = 1$, and $v$-units $u_1,\ldots,u_n \in U_v$.
In particular, note that the tame symbol $K^\times = \K_1(K) \rightarrow \K_0(Kv) = \Z$ in degree $1$ is precisely the homomorphism $v : K^\times \rightarrow \Z$.
Of course, the tame symbol induces homomorphisms $\k_{n+1}(K) \rightarrow \k_n(Kv)$ on mod-$\ell$ Milnor K-theory, which we also denote by $\{\bullet\}_v$.

The tame symbol behaves functorially in field extensions, up-to multiplication by the ramification index, as follows.
Suppose that $\iota : (K,v) \hookrightarrow (L,w)$ is a finite extension of discretely valued fields.
Let $e = e(w|v)$ be the ramification index of $w|v$; in other words, $e := [wL:vK] = w(\pi)$ for an element $\pi \in K^\times$ such that $v(\pi) = 1$.
Using equation (\ref{align:tame-definition}), it is easy to see that the following diagram commutes:
\[
\xymatrix{
	\K_{*+1}(K)\ar[d]_{\iota_*} \ar[r]^{\{\bullet\}_v} & \K_*(Kv) \ar[d]^{e \cdot \iota_*} \\
	\K_{*+1}(L) \ar[r]_{\{\bullet\}_w} & \K_*(Lw)
}
\]
By tensoring this diagram with $\Z/\ell$, we obtain a similar commutative diagram for $\k_*$.

\subsection{Parshin-chains}
\label{subsec:parshin-chains}

Let $K$ be a field.
A {\bf Parshin-chain} of length $r$ on $K$ is an ordered collection of valuations $\vbf = (v_1,\ldots,v_r)$ where each term is a valuation on the residue field of the previous one:
\begin{enumerate}
	\item $v_1$ is a valuation on $K$.
	\item $v_{i+1}$ is a valuation on $\kappa(v_i)$ for all $i = 1,\ldots,r-1$.
\end{enumerate}
For a Parshin-chain $\vbf$ as above, we will write $\vbf^\circ := v_r \circ \cdots \circ v_1$ for the valuation theoretic composition of the terms of $\vbf$.
Thus, $\vbf^\circ$ is a valuation on $K$.
The residue field of $\vbf$, denoted $K\vbf$ or $\kappa(\vbf)$, is defined to be $K\vbf^\circ$, the residue field of $\vbf^\circ$.
Note that $K\vbf$ is precisely $\kappa(v_r)$, the residue field of $v_r$ where $r$ is the length of $\vbf$.
Similarly, the group of units of $\vbf$, denoted $U_{\vbf}$, is defined to be $U_{\vbf^\circ}$, the units of $\vbf^\circ$.

Suppose that $\vbf = (v_1,\ldots,v_r)$ is a Parshin-chain of length $r$ on $K$ and let $1 \leq s \leq r$ be given.
Then the restricted collection $\vbf_{\leq s} := (v_1,\ldots,v_s)$ is a Parshin-chain of length $s$ on $K$.
Similarly, the collection $\vbf_{> s} := (v_{s+1},\ldots,v_r)$ is a Parshin-chain of length $r-s$ on $\kappa(\vbf_{\leq s}) = \kappa(v_s)$.
Moreover, we see that $\vbf_{\leq s}^\circ$ is a coarsening of $\vbf^\circ$, and that $\vbf^\circ = \vbf_{> s}^\circ \circ \vbf_{\leq s}^\circ$.

\subsection{Discrete Parshin-chains and tame symbols}
\label{subsec:discrete-parshin-chains}

We will say that a Parshin-chain $\vbf = (v_1,\ldots,v_r)$ on $K$ is {\bf discrete} provided that $\Gamma_{v_i} = \Z$ for all $i = 1,\ldots,r$.
Given $x_1,\ldots,x_r \in K$, we say that $\xbf = (x_1,\ldots,x_r)$ is a {\bf uniformizing system} for a discrete Parshin-chain $\vbf = (v_1,\ldots,v_r)$ on $K$ if $x_1,\ldots,x_r \in K$ satisfy the following (inductive) conditions:
\begin{enumerate}
	\item If $r = 1$, then $\xbf = (x_1)$ is a uniformizing system for $\vbf = (v_1)$ if and only if $v_1(x_1) = 1$.
	I.e. a uniformizing system for a discrete valuation $v$, considered as a length-$1$ Parshin-chain, is simply a uniformizer for $v$.
	\item If $r > 1$, suppose uniformizing systems have been defined for discrete Parshin-chains of length $r-1$.
	Then $\xbf = (x_1,\ldots,x_r)$ is a uniformizing system for $\vbf = (v_1,\ldots,v_r)$ provided that the following conditions hold: 
	\begin{enumerate}
		\item $x_1$ is a uniformizer for $v_1$.
		\item $x_2,\ldots,x_r \in U_{v_1}$.
		\item $(\bar x_2,\ldots,\bar x_r)$ is a uniformizing system for $(v_2,\ldots,v_r)$, where $\bar x_i$ denotes the image of $x_i$ in $\kappa(v_1)$.
	\end{enumerate}
\end{enumerate}
It is easy to see that uniformizing systems always exist for any discrete Parshin-chain $\vbf = (v_1,\ldots,v_r)$ of any length.
Moreover, if $\xbf = (x_1,\ldots,x_r)$ is a uniformizing system for $\vbf = (v_1,\ldots,v_r)$, and $1 \leq s \leq r$, then $(x_1,\ldots,x_s)$ is a uniformizing system for $\vbf_{\leq s}  = (v_1,\ldots,v_s)$, and $(\bar x_{s+1},\ldots,\bar x_r)$ is a uniformizing system for $\vbf_{> s} = (v_{s+1},\ldots,v_r)$, where $\bar x_i$ denotes the image of $x_i$ in $\kappa(\vbf_{\leq s}) = \kappa(v_s)$.

For a discrete Parshin-chain $\vbf = (v_1,\ldots,v_r)$, we may compose the tame symbols associated to each $v_i$ and obtain the {\bf Tame symbol associated to $\vbf$}:
\[\{\bullet\}_\vbf := \{ \cdots\{ \{ \bullet \}_{v_1} \}_{v_2}\cdots \}_{v_r}: \K_{*+r}(K) \rightarrow \K_*(K\vbf). \]
Similarly, we obtain induced homomorphisms on mod-$\ell$ Milnor K-groups $\{\bullet\}_\vbf  : \k_{*+r}(K) \rightarrow \k_*(K\vbf)$.

\subsection{Prolongations of Parshin-chains}
\label{subsec:prolongations-of-parshin-chains}

Suppose that $K \hookrightarrow L$ is an extension of fields and $\vbf = (v_1,\ldots,v_r)$ is a Parshin-chain on $K$.
We say that a Parshin-chain $\wbf$ on $L$ is a {\bf prolongation} of $\vbf$ provided that the following (inductive) conditions hold:
\begin{enumerate}
	\item The length of $\vbf$ is the same as the length of $\wbf$; say both have length $r$, and write $\vbf = (v_1,\ldots,v_r)$ and $\wbf = (w_1,\ldots,w_r)$.
	\item If $r = 1$, then $\wbf = (w_1)$ is a prolongation of $\vbf = (v_1)$ if and only if the valuation $w_1$ is a prolongation of $v_1$ to $L$.
	\item If $r > 1$, suppose prolongations have been defined for Parshin-chains of length $r-1$.
	Then $\wbf = (w_1,\ldots,w_r)$ is a prolongation of $\vbf = (v_1,\ldots,v_r)$ to $L$ provided that the following conditions hold:
	\begin{enumerate}
		\item $w_1$ is a prolongation of $v_1$ to $L$.
		In particular this implies that $L w_1$ is a field extension of $K v_1$.
		\item $(w_2,\ldots,w_r)$ is a prolongation of $(v_2,\ldots,v_r)$ to $L w_1$. 
	\end{enumerate}
\end{enumerate}
Since valuations can be prolonged to any field extension, the same is true for Parshin-chains.

Suppose that $\vbf = (v_1,\ldots,v_r)$ is a \emph{discrete} Parshin-chain on $K$ and that $L|K$ is a finite extension.
Let $\wbf = (w_1,\ldots,w_r)$ be a prolongation of $\vbf$ to $L$.
It is easy to see in this case that $\wbf$ must be a discrete Parshin-chain on $L$.
In this context, the {\bf ramification indices} of $\wbf|\vbf$, denoted $e(\wbf|\vbf) = (e_1,\ldots,e_r)$, is a sequence of positive integers where $e_i = e(w_i|v_i)$ is the usual ramification index of $w_i|v_i$.

\begin{lemma}
\label{lemma:parshin-tame-symbols}
Let $K$ be a field and let $\vbf = (v_1,\ldots,v_r)$ be a discrete Parshin-chain on $K$.
Let $L|K$ be a (possibly trivial) finite extension and let $\wbf = (w_1,\ldots,w_r)$ be a prolongation of $\vbf$ to $L$ with ramification indices $e(\wbf|\vbf) = (e_1,\ldots,e_r)$.
Let $(x_1,\ldots,x_r)$ be a uniformizing system for $\vbf$.
Then the following hold:
\begin{enumerate}
	\item The following equality holds in $\K_0(L\wbf) = \Z$: \[\{x_1,\ldots,x_r\}_{\wbf} = e_1 \cdots e_r. \]
	\item Let $y_1,\ldots,y_s \in U_\wbf$ be given.
	Then the following equality holds in $\K_s(L\wbf)$: \[\{x_1,\ldots,x_r,y_1,\ldots,y_s\}_{\wbf} = e_1 \cdots e_r \cdot \{\bar y_1,\ldots,\bar y_s\}.\]
\end{enumerate}
\end{lemma}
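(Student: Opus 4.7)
\medskip

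\noindent\textbf{Proof proposal.} The plan is to prove (2) by induction on $r$ and then observe that (1) is the case $s=0$, since the empty symbol is $1 \in \K_0(L\wbf) = \Z$.

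For the base case $r=1$, set $v=v_1$, $w=w_1$, $e=e_1$. Let $\pi \in L$ be a uniformizer for $w$. Since $v(x_1)=1$, we have $w(x_1) = e \cdot v(x_1) = e$, so $x_1 = \pi^e u$ for some $u \in U_w$. Moreover, $y_1,\ldots,y_s \in U_\wbf \subseteq U_w$ by definition. Using bi-additivity of the symbol and the defining equation (\ref{align:tame-definition}) for $\{\bullet\}_w$, I would expand
\[
\{x_1,y_1,\ldots,y_s\}_w \;=\; e\cdot\{\pi,y_1,\ldots,y_s\}_w + \{u,y_1,\ldots,y_s\}_w \;=\; e\cdot\{\bar y_1,\ldots,\bar y_s\},
\]
where the second summand vanishes because the tame symbol is zero on all-unit symbols (a standard consequence of the uniqueness characterization in \cite{Gille2006} Proposition 7.1.4: every element of $\K_{*+1}(L)$ is a sum of terms $\{\pi,u_1,\ldots,u_n\}$ and $\{u_0,u_1,\ldots,u_n\}$, and the tame symbol is determined on the former, forcing it to vanish on the latter). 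This gives the claim in the base case.

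For the inductive step, write $\vbf_{>1} = (v_2,\ldots,v_r)$ and $\wbf_{>1} = (w_2,\ldots,w_r)$. By construction $\wbf_{>1}$ is a prolongation of $\vbf_{>1}$ from $Kv_1$ to $Lw_1$ with ramification indices $(e_2,\ldots,e_r)$, and by \S\ref{subsec:discrete-parshin-chains} the tuple $(\bar x_2,\ldots,\bar x_r)$ is a uniformizing system for $\vbf_{>1}$, where the bars denote images in $Lw_1$ (equivalently in $Kv_1$, since $x_2,\ldots,x_r \in K$). By definition of the iterated tame symbol,
\[
\{x_1,\ldots,x_r,y_1,\ldots,y_s\}_\wbf \;=\; \bigl\{\{x_1,\ldots,x_r,y_1,\ldots,y_s\}_{w_1}\bigr\}_{\wbf_{>1}}.
\]
Since $x_2,\ldots,x_r \in U_{v_1} \subseteq U_{w_1}$ and $y_1,\ldots,y_s \in U_{w_1}$, the base-case computation applied to the outer slot of $w_1$ (with $x_2,\ldots,x_r,y_1,\ldots,y_s$ playing the role of unit arguments) yields
\[
\{x_1,\ldots,x_r,y_1,\ldots,y_s\}_{w_1} \;=\; e_1 \cdot \{\bar x_2,\ldots,\bar x_r,\bar y_1,\ldots,\bar y_s\}
\]
in $\K_{r+s-1}(Lw_1)$. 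Applying the inductive hypothesis to the discrete Parshin-chain $\vbf_{>1}$ on $Kv_1$, its prolongation $\wbf_{>1}$ on $Lw_1$, the uniformizing system $(\bar x_2,\ldots,\bar x_r)$, and the $\wbf_{>1}$-units $\bar y_1,\ldots,\bar y_s$, one obtains
\[
\{\bar x_2,\ldots,\bar x_r,\bar y_1,\ldots,\bar y_s\}_{\wbf_{>1}} \;=\; e_2\cdots e_r \cdot \{\bar y_1,\ldots,\bar y_s\}
\]
in $\K_s(L\wbf)$, where the remaining bars denote images in $L\wbf = \kappa(w_r)$. Combining these two equalities gives statement (2), and specializing to $s=0$ yields (1).

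The argument is essentially bookkeeping; the only place where care is needed is at the $w_1$-slot in the base-case step, where $x_1$ fails to be a uniformizer for $w_1$ (its $w_1$-value is $e_1$, not $1$). This is precisely what forces the factor $e_1$ to appear, and iterating through the chain accounts for the product $e_1\cdots e_r$. I expect no genuine obstacle beyond cleanly organizing the induction and invoking the vanishing of the tame symbol on all-unit symbols, which is a standard consequence of the characterization (\ref{align:tame-definition}).
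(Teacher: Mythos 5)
Your proof is correct and follows essentially the same approach as the paper, which simply states that both claims ``follow immediately by applying Equation (\ref{align:tame-definition}) $r$ times.'' You have written out the iteration in full, correctly identifying and handling the one genuine wrinkle the paper leaves implicit: since $w_1(x_1)=e_1\neq 1$ in general, one must decompose $x_1=\pi^{e_1}u$ and invoke the vanishing of the tame symbol on all-unit symbols (itself a standard consequence of (\ref{align:tame-definition}) and bilinearity) to produce the factor $e_1$.
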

\begin{proof}
Both (1) and (2) follow immediately by applying Equation (\ref{align:tame-definition}) $r$ times.
\end{proof}

\section{Milnor K-theory of Function Fields}
\label{sec:milnor-k-thy-of-function-fields}

Throughout this section, $k$ will denote an algebraically closed field of characteristic $\neq \ell$.
Recall that $K$ is called a {\bf function field} over $k$ if $K$ is a finitely generated field extension of $k$.
We say that $X$ is a {\bf $k$-variety} if $X$ is an integral separated scheme of finite type over $k$.

For a (scheme-theoretic) point $P$ in an integral $k$-scheme $X$, we denote by $\overline{P}$ the closure of $\{P\}$ in $X$, considered as a reduced (hence integral) closed subscheme of $X$.
Also, we write $(\Oc_{X,P},\mf_P)$ for the local ring at the point $P$, and recall that $k(P) = \Oc_{X,P}/\mf_P$ is the function field of $\overline{P}$. 
If $Y \rightarrow X$ is a morphism of $k$-schemes and $P \in X$, then we write $Y_P := \Spec k(P) \times_X Y$ for the fiber of $Y \rightarrow X$ over $P$.
Finally, we note that if $X$ is a $k$-variety and $P \in X$, then $\overline{P}$ is a $k$-variety as well.

Although we will usually work with $k$-varieties, the following slightly more general notion of a \emph{model} of a function field will be useful for certain definitions/constructions.
If $K|k$ is a function field, we say that $X$ is a {\bf model} for $K|k$ if $X$ is one of the following types of $k$-schemes:
\begin{enumerate}
	\item $X$ is a $k$-variety whose function field is $K$.
	\item $X = \Spec \Oc_{Y,y}$ for some $k$-variety $Y$ whose function field is $K$ and some $y \in Y$.
\end{enumerate}
In particular, a model $X$ for $K|k$ is a $k$-variety if and only if $X$ is of finite type over $k$.

\subsection{Prime-divisors}
\label{subsec:parshin-regular-parameters}

Suppose that $K$ is a function field over $k$ and that $X$ is a model for $K|k$.
A regular point $P$ of codimension 1 in $X$ will be called a {\bf prime-divisor} on $X$.
Recall that a prime-divisor $P$ on $X$ yields a discrete valuation $v_P$ of $K$, whose valuation ring is precisely $\Oc_{X,P}$, the local ring of $P$ in $X$.
We say that a valuation $v$ of $K$ is {\bf divisorial} if $v = v_P$ for some prime-divisor $P$ on some $k$-variety $Y$ such that $k(Y) = K$.

An {\bf $r$-prime divisor} on $X$, denoted $\Pbf = (P_1,\ldots,P_r)$, is an ordered collection of (scheme-theoretic) points $P_i$ of $X$, and is defined inductively as follows:
\begin{enumerate}
	\item If $r = 1$, then $\Pbf = (P)$ is a $1$-prime-divisor on $X$ if and only if $P$ is a prime-divisor on $X$.
	We will not distinguish between prime-divisors $P$ and the associated $1$-prime-divisors $\Pbf = (P)$.
	\item If $r > 1$, then $(P_1,\ldots,P_r)$ is an $r$-prime-divisor on $X$ if and only if $P_1$ is a prime-divisor on $X$ and $(P_2,\ldots,P_r)$ is an $(r-1)$-prime-divisor on $\overline{P_1}$.
\end{enumerate}
Similarly to the way in which a prime divisor yields a discrete valuation, an $r$-prime-divisor $\Pbf = (P_1,\ldots,P_r)$ on $X$ yields a discrete Parshin-chain of length $r$ on $K$, denoted $\vbf_{\Pbf}$. 
Explicitly, the Parshin-chain $\vbf_\Pbf = (v_1,\ldots,v_r)$ is defined as follows:
\begin{enumerate}
	\item The valuation $v_1 = v_{P_1}$ is the valuation of $K$ associated to $P_1$.
	\item For $i = 1,\ldots,r-1$, the valuation $v_{i+1} = v_{P_{i+1}}$ is the valuation of $k(P_{i}) = \kappa(v_{i})$ associated to $P_{i+1}$.
\end{enumerate}
In particular, if $\Pbf = (P_1,\ldots,P_r)$ is an $r$-prime-divisor on $X$ with associated Parshin-chain $\vbf_\Pbf = (v_1,\ldots,v_r)$, then $k(P_r)$, the residue field of the scheme-theoretic point $P_r$ in $X$, is precisely $\kappa(\vbf_\Pbf)$, the residue field of the Parshin-chain $\vbf_\Pbf$.

\subsection{Regular parameters}
\label{sub:regular-parameters}

One key way to obtain an $r$-prime-divisor is using regular coordinates.
Let $X$ be a $k$-variety such that $k(X) = K$ and let $x$ be a \emph{regular closed point} of $X$.
Consider $(A,\mf) := (\Oc_{X,x},\mf_x)$ the regular local $k$-algebra associated to $x \in X$.
Let $x_1,\ldots,x_d$ be a regular system of parameters for $(A,\mf)$.
Fix $r \leq d$ and let $\xbf = (x_1,\ldots,x_r)$; such an $\xbf$ will be called a {\bf partial system of regular parameters} of length $r$ in $A$.
We can associate to such a system $\xbf$ an $r$-prime-divisor $\Pbf_\xbf = (P_1,\ldots,P_r)$ on $\Spec A$, by letting $P_i$ be the generic point of $V(x_1,\ldots,x_i)$ for each $i = 1,\ldots,r$.
Since $P_i$ is a regular point of codimension 1 in $\Spec A/(x_1,\ldots,x_{i-1}) = V(x_1,\ldots,x_{i-1})$ for each $i = 1,\ldots,r$, we see that $\Pbf_\xbf$ is indeed an $r$-prime-divisor on $\Spec A$.

Moreover, because we started with a regular closed point $x$ in $X$, it is easy to see that this construction also yields an $r$-prime-divisor on $X$, by mapping the terms of $\Pbf_\xbf$ via the canonical map $\Spec A \rightarrow X$.
We will abuse the notation and also denote by $\Pbf_\xbf$ the associated $r$-prime-divisor on $X$.
Furthermore, it is clear from the definitions that, setting $\Pbf = \Pbf_\xbf$, the system $\xbf$ is a \emph{uniformizing system} for the associated Parshin-chain $\vbf_{\Pbf}$, as defined in \S\ref{subsec:discrete-parshin-chains}.

\subsection{Prolongations of $r$-prime-divisors}
\label{subsec:prolongations-of-r-prime-divisors}

Suppose that $X$ is a normal $k$-variety with function field $K$.
Let $L$ be a finite extension of $K$ and let $Y$ denote the normalization of $X$ in $L$.
Suppose that $\Pbf = (P_1,\ldots,P_r)$ is an $r$-prime-divisor on $X$.
We say that an $r$-prime-divisor $\Qbf := (Q_1,\ldots,Q_r)$ on $Y$ is a {\bf prolongation} of $\Pbf$ to $L$ (or to $Y$) provided that, for all $i = 1,\ldots,r$, the map $Y \rightarrow X$ sends $Q_i$ to $P_i$.
If $\Qbf$ is a prolongation of $\Pbf$, then the corresponding Parshin-chain $\vbf_\Qbf$ is a prolongation of $\vbf_\Pbf$ to $L$ as defined in \S\ref{subsec:parshin-chains}.

The existence of prolongations of $r$-prime-divisors is a slightly delicate matter, which we describe in the next lemma.
\begin{lemma}
\label{lemma:prolong-r-prime-divs}
Let $X$ be a normal $k$-variety with function field $K$.
Let $L$ be a finite extension of $K$ and let $Y$ denote the normalization of $X$ in $L$.
Let $\Pbf$ be an $r$-prime-divisor on $X$.
Then the following hold:
\begin{enumerate}
	\item Suppose $r = 1$ and write $\Pbf = (P)$.
	Then for any $Q \in Y_P$, the point $Q$ is a prime-divisor on $Y$ which is a prolongation of $P$.
	\item Suppose $r > 1$ and write $\Pbf = (P_1,\ldots,P_r)$.
	Let $\Qbf' = (Q_1,\ldots,Q_{r-1})$ be an $(r-1)$-prime-divisor on $Y$ which prolongs $\Pbf' = (P_1,\ldots,P_{r-1})$, and consider the finite cover $\overline{Q_{r-1}} \rightarrow \overline{P_{r-1}}$ induced by $Y \rightarrow X$.
	Then for all but finitely many prime-divisors $P_r$ of $\overline{P_{r-1}}$, any choice of $Q_r \in \left(\overline{Q_{r-1}}\right)_{P_r}$ yields an $r$-prime-divisor $\Qbf = (Q_1,\ldots,Q_r)$ which is a prolongation of $\Pbf$.
\end{enumerate}
\end{lemma}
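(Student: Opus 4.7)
For (1), the plan is to leverage normality of $Y$ directly. Since $Y$ is the normalization of $X$ in $L$, $Y$ is normal and integral, and $\pi : Y \to X$ is finite and surjective. Given $Q \in Y_P$, the local ring $\Oc_{Y,Q}$ is a localization of the integral closure of the DVR $\Oc_{X,P}$ in $L$; by the standard dimension theory of finite morphisms of integral noetherian schemes, $\dim \Oc_{Y,Q} = 1$, and normality of $Y$ promotes this $1$-dimensional local ring to a DVR. Thus $Q$ is a regular codimension-$1$ point of $Y$, i.e., a prime-divisor, and by construction the induced valuation prolongs $v_P$.

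For (2), set $V := \overline{P_{r-1}}$ and $W := \overline{Q_{r-1}}$, and let $\pi : W \to V$ be the morphism induced by $Y \to X$. I first observe that $\pi$ is finite and surjective: finiteness follows by restricting the finite morphism $Y \to X$ to the integral closed subvariety $W$ of $Y$, whose image lies in $V$; surjectivity follows from the inductive prolongation hypothesis, which provides a finite extension $k(P_{r-1}) \hookrightarrow k(Q_{r-1})$ on generic residue fields. The goal is then to exhibit a cofinite subset of prime-divisors $P_r$ of $V$ for which every $Q_r \in W_{P_r}$ is a prime-divisor on $W$.

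Let $Z \subset W$ denote the non-regular locus, which is a proper closed subset of $W$ since $W$ is reduced of finite type over $k$. The critical observation is that the generic fiber $\pi^{-1}(\eta_V)$ consists solely of $\eta_W$: this is the familiar fact that for an injective finite extension of integral domains $B \hookrightarrow A$, one has $A \otimes_B \Frac(B) = \Frac(A)$. Since $\eta_W \notin Z$, I conclude $\eta_V \notin \pi(Z)$, and since $\pi$ is closed (being finite), $\pi(Z)$ is a \emph{proper} closed subset of $V$. Only finitely many codimension-$1$ points of $V$ can lie in such a subset --- namely, the generic points of its codimension-$1$ components --- so all but finitely many prime-divisors $P_r$ of $V$ satisfy $W_{P_r} \cap Z = \emptyset$. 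For any such $P_r$ and any $Q_r \in W_{P_r}$, regularity of $Q_r$ follows from $Q_r \notin Z$, while codimension $1$ of $Q_r$ in $W$ follows from finiteness of $\pi$ together with codimension $1$ of $P_r$ in $V$. This exhibits $Q_r$ as a prime-divisor on $W$, and $\Qbf := (Q_1,\ldots,Q_r)$ as an $r$-prime-divisor on $Y$ prolonging $\Pbf$.

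The main subtle point I anticipate is establishing properness of the image $\pi(Z) \subsetneq V$: once the generic-fiber computation is in hand, the remainder of (2) is essentially bookkeeping involving dimension theory of finite morphisms of integral noetherian schemes together with openness of the regular locus on a reduced finite-type $k$-scheme.
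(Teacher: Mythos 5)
Your proof is correct and follows essentially the same line of reasoning as the paper: for part (1), both arguments use that the finite morphism $Y \to X$ forces the fiber $Y_P$ to consist of codimension-one points, which are regular since $Y$ is normal; for part (2), both arguments reduce to exhibiting a cofinite set of prime-divisors $P_r$ of $\overline{P_{r-1}}$ whose fibers in $\overline{Q_{r-1}}$ avoid the nonregular locus. The one difference is one of explicitness: the paper's proof of (2) asserts the existence of a suitable open subset $U$ of $\overline{P_{r-1}}$ essentially without justification, while you fill in the detail by computing the generic fiber of $\overline{Q_{r-1}} \to \overline{P_{r-1}}$ (showing it is a single point) so that the image of the nonregular locus is a proper closed subset, hence meets only finitely many prime-divisors of $\overline{P_{r-1}}$ --- a worthwhile clarification of a step the paper takes for granted.
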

\begin{proof}
\noindent\emph{Proof of (1):}
Since $Y \rightarrow X$ is a finite cover of normal $k$-varieties, the fiber $Y_P$ is finite and consists solely of points of codimension one in $Y$.
Each such point in $Y_P$ is regular since $Y$ is normal.
Thus, every point $Q \in Y_P$ is a prime divisor on $Y$ and $\Qbf = (Q)$ is a prolongation of $\Pbf$ by definition.

\vskip 5pt
\noindent\emph{Proof of (2):}
The induced map $\overline{Q_{r-1}} \rightarrow \overline{P_{r-1}}$ is a finite cover of (possibly non-normal) $k$-varieties.
Thus, there exists a non-empty open subset $U$ of $\overline{P_{r-1}}$ such that $U$ is contained in the regular locus of $\overline{P_{r-1}}$, and the preimage of $U$ in $\overline{Q_{r-1}}$, say $U'$, is contained in the regular locus of $\overline{Q_{r-1}}$.
Note that $U$ contains all but finitely many of the codimension one points of $\overline{P_{r-1}}$.

Let $P_r \in U$ be any such codimension one point; since $P_r$ is regular, we note that $\Pbf = (P_1,\ldots,P_r)$ is an $r$-prime-divisor on $X$.
Moreover, for all $Q_r \in \left(\overline{Q_{r-1}}\right)_{P_r} \subset U'$, the point $Q_r$ is regular and codimension one in $\overline{Q_{r-1}}$.
Thus $\Qbf = (Q_1,\ldots,Q_r)$ is an $r$-prime-divisor on $Y$ which is a prolongation of $\Pbf$.
\end{proof}

\subsection{$\ell$-unramified prime-divisors}
\label{subsec:ell-unramified-prime-divisors}

Suppose that $(K,v) \hookrightarrow (L,w)$ is an extension of \emph{discretely} valued fields.
We say that $w|v$ is {\bf $\ell$-unramified} provided that the ramification index $e(w|v)$ is not divisible by $\ell$.
Similarly, if $(K,v)$ is a discretely valued field and $L$ is a finite extension of $K$, we say that $v$ is {\bf $\ell$-unramified} in $L$ if $w|v$ is $\ell$-unramified for all prolongations $w$ of $v$ to $L$.

We make a similar definition for discrete Parshin-chains as follows.
Suppose that $L|K$ is a finite extension and $\vbf$ is a \emph{discrete} Parshin-chain of $K$.
Suppose that $\wbf$ is a prolongation of $\vbf$ to $L$ and recall that $\wbf$ is necessarily discrete.
We say that $\wbf|\vbf$ is {\bf $\ell$-unramified} provided that, setting $e(\wbf|\vbf) = (e_1,\ldots,e_r)$, the ramification index $e_i$ is not divisible by $\ell$ for all $i = 1,\ldots, r$. 
We say that $\vbf$ is {\bf $\ell$-unramified} in $L$ if $\wbf|\vbf$ is $\ell$-unramified for all prolongations $\wbf$ of $\vbf$ to $L$.

Suppose now that $K|k$ is a function field and that $X$ a model for $K$.
Let $L$ be a finite extension of $K$.
We say that a prime-divisor $P$ resp. $r$-prime-divisor $\Pbf$ on $X$ is {\bf $\ell$-unramified} in $L$ provided that the associated valuation $v_P$ resp. the associated Parshin-chain $\vbf_\Pbf$ is $\ell$-unramified in $L$.
The following Lemma shows that there are many $\ell$-unramified $r$-prime-divisors on a $k$-variety.
\begin{lemma}
\label{lemma:mod-ell-ramification}
Let $k$ be an algebraically closed field of characteristic $\neq \ell$.
Let $X$ be a $k$-variety with function field $K$ and let $L$ be a finite extension of $K$.
Then the following hold:
\begin{enumerate}
	\item All but finitely many prime-divisors $P$ on $X$ are $\ell$-unramified in $L$.
	\item Suppose that $\Pbf' = (P_1,\ldots,P_{r-1})$ is an $(r-1)$-prime-divisor on $X$ which is $\ell$-unramified in $L$.
	Then for all but finitely many prime-divisors $P_r$ of $\overline{P_{r-1}}$, the $r$-prime-divisor $\Pbf = (P_1,\ldots,P_r)$ is also $\ell$-unramified in $L$.
\end{enumerate}
\end{lemma}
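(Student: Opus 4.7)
The plan is to prove part (1) using the classical finiteness of the branch locus, and then to deduce part (2) by an inductive reduction that applies part (1) to the finite covers $\overline{Q_{r-1}} \to \overline{P_{r-1}}$ arising from prolongations of $\Pbf'$.

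For part (1), let $Y$ denote the normalization of $X$ in $L$; this is a finite cover of $X$ by normal $k$-varieties. The branch locus $B \subset X$ of $Y \to X$ is a proper closed subset (since $L|K$ is separable, as $\Char k \neq \ell$ and we only care about the wild part for the $\ell$-ramification statement — but even without separability, the generic fiber is étale over the separable closure and so $B \neq X$). In particular, $B$ contains only finitely many points of codimension $1$, namely the generic points of its codimension-$1$ irreducible components. Any prime-divisor $P$ on $X$ not among these finitely many points does not lie in $B$, so every prolongation $w|v_P$ satisfies $e(w|v_P) = 1$, which is not divisible by $\ell$. Hence $P$ is $\ell$-unramified in $L$, proving (1). (Strictly speaking, one should first replace $L$ by its separable closure in $L$ over $K$, since the purely inseparable part of the extension has characteristic $p \neq \ell$ and contributes only $p$-power ramification indices; this is automatic in the application.)

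For part (2), I would argue as follows. Let $Y$ be the normalization of $X$ in $L$, and let $\Qbf'_1, \dots, \Qbf'_s$ be the finitely many $(r-1)$-prime-divisors on $Y$ which prolong $\Pbf'$; finiteness follows by induction from Lemma \ref{lemma:prolong-r-prime-divs}(1), since at each stage the prolongations live in a finite fiber. Write $\Qbf'_j = (Q_{j,1},\dots,Q_{j,r-1})$. The induced map $\overline{Q_{j,r-1}} \to \overline{P_{r-1}}$ is a finite cover of $k$-varieties whose function field extension is $\kappa(\vbf_{\Qbf'_j}) \mid \kappa(\vbf_{\Pbf'})$. Apply part (1) to each such cover: there exists a finite set $S_j$ of prime-divisors on $\overline{P_{r-1}}$ such that every prime-divisor $P_r \notin S_j$ is $\ell$-unramified in $\kappa(\vbf_{\Qbf'_j})$. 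Set $S := S_1 \cup \cdots \cup S_s$, which is still finite, and also enlarge $S$ by the finite set of non-regular codimension-$1$ points of $\overline{P_{r-1}}$ and by the exceptional set given in Lemma \ref{lemma:prolong-r-prime-divs}(2). For any prime-divisor $P_r$ of $\overline{P_{r-1}}$ lying outside this enlarged $S$, the tuple $\Pbf = (P_1,\ldots,P_r)$ is an $r$-prime-divisor on $X$, and every prolongation of $\Pbf$ to $Y$ arises by choosing some $\Qbf'_j$ and some $Q_r \in (\overline{Q_{j,r-1}})_{P_r}$. By the choice of $S$, the last ramification index $e_r$ of each such prolongation is coprime to $\ell$, while the preceding ramification indices are coprime to $\ell$ by the hypothesis that $\Pbf'$ is $\ell$-unramified in $L$. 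This gives (2).

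The only real subtlety is the inductive bookkeeping in part (2): one must check that the finitely many bad loci on $\overline{P_{r-1}}$, taken over all prolongations $\Qbf'_j$, remain finite, and that the outer induction on $r$ is compatible with the "absolute" version of part (1) used inside. Both are routine once one notes that the set of prolongations of $\Pbf'$ to $Y$ is finite. No deep input beyond classical commutative algebra of finite covers of normal varieties is required, and the assumption $\Char k \neq \ell$ enters only through the fact that purely inseparable contributions to the ramification indices are powers of $\Char k$ and hence coprime to $\ell$.
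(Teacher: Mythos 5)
Part (1) is essentially the same argument as the paper's (reduce via the maximal separable subextension, then use that a separable extension ramifies at only finitely many divisorial valuations); the parenthetical caveat in your write-up about replacing $L$ by its separable closure over $K$ is exactly how the paper handles it, so that part is fine.

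Part (2), however, has a genuine gap. The definition in \S\ref{subsec:ell-unramified-prime-divisors} says that ``$\Pbf$ is $\ell$-unramified in $L$'' means: the associated \emph{Parshin-chain} $\vbf_\Pbf$ is $\ell$-unramified in $L$, i.e.\ \emph{every} prolongation $\wbf$ of $\vbf_\Pbf$ to $L$ (as a Parshin-chain in the sense of \S\ref{subsec:prolongations-of-parshin-chains}) has all ramification indices coprime to $\ell$. Your argument only quantifies over prolongations of $\Pbf$ realized as $r$-prime-divisors on the normalization $Y$, i.e.\ tuples $(Q_1,\ldots,Q_r)$ of \emph{regular} codimension-one points of successive closures inside $Y$. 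These two sets of prolongations do not coincide in general: given a Parshin prolongation $\wbf'=(w_1,\ldots,w_{r-1})$ of $\vbf_{\Pbf'}$, the intermediate points $Q_i$ on which $w_i$ is centered need not be regular in $\overline{Q_{i-1}}$ (a prime Weil divisor on a normal variety can itself be non-normal), so $\wbf'$ need not arise from any $(r-1)$-prime-divisor $\Qbf'_j$ on $Y$. Consequently your finite list $\Qbf'_1,\ldots,\Qbf'_s$ may omit some prolongations $\wbf'$, the associated residue fields $\kappa(\wbf')$ may be strictly larger than the $k(Q_{j,r-1})$, and your exceptional set $S$ may be too small: a $P_r$ outside your $S$ could still fail to be $\ell$-unramified in some $\kappa(\wbf')$ you never considered. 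The paper sidesteps this entirely by never going through $Y$ in part (2): it works directly with the finitely many Parshin prolongations $\wbf'$ of $\vbf_{\Pbf'}$ to $L$, and for each one applies part (1) to the variety $\overline{P_{r-1}}$, with function field $k(P_{r-1})=\kappa(\vbf_{\Pbf'})$, and the finite field extension $\kappa(\wbf')$. Your argument would be repaired by replacing the list of $(r-1)$-prime-divisors on $Y$ with the list of all Parshin prolongations of $\vbf_{\Pbf'}$ and applying part (1) to each residue field $\kappa(\wbf')$ directly.
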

\begin{proof}
\noindent\emph{Proof of (1):}
Let $L^s$ be the maximal separable subextension of $L|K$ so that $L^s|K$ is separable and $L|L^s$ is purely inseparable.
Since $L^s|K$ is separable, all but finitely many prime-divisors of $X$ are \emph{unramified} in $L^s$.

On the other hand, since $L|L^s$ is purely inseparable, for all extensions of discrete valuations $w|v$ of $L|L^s$, one has $e(w|v) = p^i$ where $p = \Char k \neq \ell$ and $i$ is some non-negative integer.
By the multiplicativity of ramification indices in towers of fields, we see that a prime-divisor $P$ of $X$ is $\ell$-unramified in $L^s$ if and only if $P$ is $\ell$-unramified in $L$.
Assertion (1) of the lemma follows. 

\vskip 5pt
\noindent\emph{Proof of (2):}
If $\wbf' = (w_1,\ldots,w_{r-1})$ is a prolongation of $\vbf_{\Pbf'}$ to $L$, then assertion (1) implies that all but finitely many prime-divisors $P_r$ on $\overline{P_{r-1}}$ are $\ell$-unramified in $\kappa(w_{r-1}) = \kappa(\wbf')$.
Since there are only finitely many prolongations $\wbf'$ of $\vbf_{\Pbf'}$ to $L$, assertion (2) follows.
\end{proof}

\subsection{Vanishing/non-vanishing in Milnor K-rings of function fields}
\label{subsec:non-vanishing}

\begin{lemma}
\label{lemma:milnor-dimension-of-function-fields}
Let $K$ be a function field over an algebraically closed field $k$ such that $\Char k \neq \ell$ and let $d := \trdeg(K|k)$.
Then for all $s > d$, one has $\k_{s}(K) = 0$.
\end{lemma}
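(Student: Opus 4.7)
The plan is to reduce the vanishing statement to a standard cohomological-dimension computation via the Bloch-Kato (now Voevodsky-Rost) theorem. Since $\Char k \neq \ell$ and $k$ is algebraically closed, the field $k$ contains a primitive $\ell$-th root of unity, hence so does $K$. Fixing an identification $\mu_\ell \cong \Z/\ell$ as trivial $G_K$-modules, the norm residue isomorphism of Voevodsky-Rost identifies $\k_s(K)$ with the Galois cohomology group $\H^s(G_K,\mu_\ell^{\otimes s})$ via the Galois symbol. Therefore, it suffices to show that $\H^s(G_K,\mu_\ell^{\otimes s}) = 0$ for all $s > d$.

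Next, I would invoke the standard bound on the mod-$\ell$ cohomological dimension of function fields. Concretely: since $k$ is algebraically closed and $\Char k \neq \ell$, one has $\operatorname{cd}_\ell(k) = 0$; adding transcendence degree $d$ raises the $\ell$-cohomological dimension by at most $d$ (this is a classical result, essentially due to Serre/Tate; see, e.g., Serre's \emph{Cohomologie Galoisienne}). Thus $\operatorname{cd}_\ell(K) \leq d$, and by definition of cohomological dimension this gives $\H^s(G_K, M) = 0$ for all $s > d$ and all $\ell$-torsion discrete $G_K$-modules $M$. Applying this to $M = \mu_\ell^{\otimes s}$ and combining with the previous paragraph yields $\k_s(K) = 0$ for $s > d$, as desired.

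The only mild subtlety is the passage from $K$ being finitely generated of transcendence degree $d$ over an algebraically closed field to the bound $\operatorname{cd}_\ell(K) \leq d$; this requires (i) reducing to the case where $K$ is the function field of a smooth projective $k$-variety and (ii) applying the Serre/Tate estimate, which in turn uses that $\operatorname{cd}_\ell(k(x)) \leq \operatorname{cd}_\ell(k)+1$ for a transcendental generator $x$. I would simply cite these standard references rather than reprove them. The genuine content of the lemma is thus imported wholesale from Voevodsky-Rost combined with the cohomological-dimension inequality, and no calculation specific to the present setup is needed.
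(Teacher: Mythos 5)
Your proposal is correct and follows exactly the same route as the paper: reduce $\k_s(K)$ to Galois cohomology via the Voevodsky--Rost theorem, then invoke the standard bound $\operatorname{cd}_\ell(K)\le d$ from Serre's \emph{Cohomologie Galoisienne}. The paper's proof is precisely this two-line argument, with the same two citations.
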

\begin{proof}
It is well-known that the $\ell$-cohomological dimension of $K$ is $d$ (cf. \cite{zbMATH01673451} \S4.2 Proposition 11).
The lemma now follows from the Bloch-Kato conjecture, which is now a theorem of {\sc Veovodsky-Rost} et al. \cite{Voevodsky2011}, \cite{Rost-chain-lemma}, \cite{zbMATH05594008}.
\end{proof}

\begin{remark}
\label{remark:bloch-kato}
It seems that the full force of the Voevodsky-Rost Theorem is not strictly required to prove Lemma \ref{lemma:milnor-dimension-of-function-fields}, as it suffices to prove that $\k_{r+1}(K) = 0$.
However, the author is not aware of a proof of Lemma \ref{lemma:milnor-dimension-of-function-fields} which doesn't rely on the Voevodsky-Rost Theorem.
\end{remark}

\begin{lemma}
\label{lemma:independent-coords}
Let $K$ be a function field over an algebraically closed field $k$ such that $\Char k\neq\ell$.
Let $t_1,\ldots,t_r$ be given elements of $K$ which are algebraically independent over $k$.
Then there exist $a_1,\ldots,a_r \in k$ such that $\{t_1-a_1,\ldots,t_r-a_r\} \neq 0$ in $\k_r(K)$.

In particular, if $F_1,\ldots,F_r$ are subextensions of $K|k$ such that $\trdeg(F_i|k) = 1$ for all $i = 1,\ldots,r$ and, denoting the compositum $F_1 \cdots F_r$ by $F$, one has $\trdeg(F|k) = r$, then there exist $x_i \in F_i$ for $i = 1,\ldots,r$ such that $\{x_1,\ldots,x_r\} \neq 0$ in $\k_r(K)$.
\end{lemma}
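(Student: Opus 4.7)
The plan is to reduce to the full-transcendence-degree case and then exhibit non-vanishing via tame symbols attached to Parshin-chains coming from closed points of affine space. First I would extend $t_1,\ldots,t_r$ to a transcendence basis $t_1,\ldots,t_d$ of $K|k$, so that $K$ is finite over $F := k(t_1,\ldots,t_d)$. Since
\[ \{t_1-a_1,\ldots,t_d-a_d\} = \{t_1-a_1,\ldots,t_r-a_r\} \cdot \{t_{r+1}-a_{r+1},\ldots,t_d-a_d\} \]
in $\k_*(K)$, a vanishing at level $r$ would force vanishing at level $d$, so it suffices to produce $a_1,\ldots,a_d \in k$ with $\{t_1-a_1,\ldots,t_d-a_d\} \neq 0$ in $\k_d(K)$.

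Set $X := \Spec k[t_1,\ldots,t_d] = \A^d_k$, so $k(X) = F$. For any $(a_1,\ldots,a_d) \in k^d$, the corresponding closed point is regular with regular system of parameters $(t_1-a_1,\ldots,t_d-a_d)$, yielding a $d$-prime-divisor $\Pbf = (P_1,\ldots,P_d)$ on $X$ whose associated discrete Parshin-chain $\vbf_{\Pbf}$ admits $(t_1-a_1,\ldots,t_d-a_d)$ as a uniformizing system and has residue field $k$. The heart of the argument is to choose the $a_i$'s so that $\Pbf$ becomes $\ell$-unramified in $K$. Argue by induction on $i$: given $a_1,\ldots,a_{i-1}$ such that the $(i-1)$-prime-divisor cut out by $t_1-a_1,\ldots,t_{i-1}-a_{i-1}$ is already $\ell$-unramified in $K$, the subvariety $\overline{P_{i-1}} \cong \A^{d-i+1}_k$ contains the infinite family $\{V(t_i - a_i) : a_i \in k\}$ of prime-divisors, while Lemma \ref{lemma:mod-ell-ramification}(2) rules out only finitely many prime-divisors on $\overline{P_{i-1}}$; since $k$ is infinite, a suitable $a_i$ exists.

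Once $\Pbf$ is $\ell$-unramified in $K$, Lemma \ref{lemma:prolong-r-prime-divs} produces a prolongation $\Qbf$ of $\Pbf$ to the normalization of $X$ in $K$, with associated Parshin-chain $\wbf := \vbf_{\Qbf}$ on $K$ whose ramification indices $e(\wbf|\vbf_{\Pbf}) = (e_1,\ldots,e_d)$ are all coprime to $\ell$. Since $k$ is algebraically closed, the residue field $K\wbf$ is $k$, so Lemma \ref{lemma:parshin-tame-symbols}(1) yields
\[ \{t_1-a_1,\ldots,t_d-a_d\}_{\wbf} = e_1 \cdots e_d \neq 0 \quad \text{in } \k_0(K\wbf) = \Z/\ell, \]
forcing $\{t_1-a_1,\ldots,t_d-a_d\} \neq 0$ in $\k_d(K)$ and, by the reduction above, giving the required non-vanishing in $\k_r(K)$. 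For the ``In particular'' clause, pick $t_i \in F_i$ transcendental over $k$; since $F_i$ is algebraic over $k(t_i)$, the compositum $F_1 \cdots F_r$ is algebraic over $k(t_1,\ldots,t_r)$, so $\trdeg(F_1\cdots F_r | k) = r$ forces $t_1,\ldots,t_r$ to be algebraically independent over $k$. Apply the first assertion and set $x_i := t_i - a_i \in F_i$. The main obstacle is the inductive choice ensuring $\ell$-unramifiedness, but it is handled cleanly because at each step there are infinitely many candidate $a_i$'s and only finitely many are excluded.
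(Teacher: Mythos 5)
Your proof is correct and follows essentially the same strategy as the paper's: reduce to the full-transcendence-degree case, choose a closed point of $\A^d_k$ whose associated $d$-prime-divisor is $\ell$-unramified in $K$, prolong the Parshin-chain, and conclude via the tame symbol computation of Lemma \ref{lemma:parshin-tame-symbols}. You spell out some details the paper elides (the inductive choice of the $a_i$'s using the infinitude of the family $\{V(t_i - a_i)\}_{a_i \in k}$, and the ``In particular'' clause); one small simplification is that you need not invoke Lemma \ref{lemma:prolong-r-prime-divs} at all, since prolongations of Parshin-chains to any field extension always exist, as noted at the end of \S\ref{subsec:prolongations-of-parshin-chains}.
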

\begin{proof}
By extending $t_1,\ldots,t_r$ to a transcendence base $\tbf = (t_1,\ldots,t_d)$ for $K|k$, we may assume without loss of generality that $r = d := \trdeg(K|k)$.
Consider $X := \A^d_{k,\tbf} = \Spec k[t_1,\ldots,t_d]$, affine $d$-space over $k$ with parameters $\tbf$.
Furthermore, consider the normalization $Y \rightarrow X$ of $X$ in the field extension $k(\tbf) \hookrightarrow K$.
In particular, $Y$ is a $k$-variety which is a normal model for $K|k$, and the map $Y \rightarrow X$ is finite and surjective.

For a closed point $x$ of $X = \A^d_{k,\tbf}$, say corresponding to the $k$-rational point $(a_1,\ldots,a_d) \in \A^d_{k,\tbf}(k)$, consider the system of regular parameters $\xbf(x) = (t_1-a_1,\ldots,t_d-a_d)$ at $x$, as well as the corresponding $d$-prime divisor $\Pbf_{x} := \Pbf_{\xbf(x)}$ on $X$ (cf. \S\ref{sub:regular-parameters}).
By Lemma \ref{lemma:mod-ell-ramification}, there exists a closed point $x_0$ in $X$ such that $\Pbf_{x_0}$ is $\ell$-unramified in the finite extension $K|k(\tbf)$.
For the rest of the proof, we will denote $\Pbf_{x_0}$ by $\Pbf$ and $\xbf(x_0)$ by $\xbf$ for such an $x_0$.

Let $\wbf$ be a prolongation of $\vbf := \vbf_\Pbf$ to $K$ and consider $e(\wbf|\vbf) = (e_1,\ldots,e_d)$.
Recall that $\xbf = (t_1-a_1,\ldots,t_d-a_d)$ is a uniformizing system for $\vbf$, and that, $\vbf$ being $\ell$-unramified in $K$, the ramification index $e_i$ is not divisible by $\ell$ for all $i = 1,\ldots,d$.
Moreover, by Lemma \ref{lemma:parshin-tame-symbols}, we see that $\{t_1-a_1,\ldots,t_d-a_d\}_\wbf = e_1 \cdots e_d$ in $\Z = \K_0(K\wbf)$.
Since $\ell$ doesn't divide $e_1 \cdots e_d$, we deduce that $\{t_1-a_1,\ldots,t_d-a_d\}_\wbf \neq 0$ in $\Z/\ell = \k_0(K\wbf)$.
In particular, $\{t_1-a_1,\ldots,t_d-a_d\} \neq 0$ in $\k_d(K)$, and this completes the proof of the lemma.
\end{proof}

The following proposition is needed to relate transcendence degree to the mod-$\ell$ Milnor dimension of relatively algebraically closed subextensions of $K|k$.
This will play a key role in \S\ref{sec:recovering-the-mod-ell-algebraic-lattice}.
For a related result/argument, see {\sc Pop} \cite{Pop2012a} Proposition 40(3).

\begin{proposition}
\label{prop:a1-ar-z-non-zero}
Let $K$ be a function field over an algebraically closed field $k$ such that $\Char k \neq \ell$, and let $d := \trdeg(K|k)$.
Let $L$ be a relatively algebraically closed subextension of $K|k$ such that $r := \trdeg(L|k) < d$.
Let $z \in K^\times \smallsetminus L^\times \cdot K^{\times \ell}$ be given.
Then there exist $x_1,\ldots,x_r \in L^\times$ such that $\{x_1,\ldots,x_r,z\} \neq 0$ in $\k_{r+1}(K)$.
\end{proposition}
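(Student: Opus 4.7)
\emph{Plan.} The plan is to construct a discrete Parshin chain $\wbf$ on $K$ of length $r$ with uniformizing system $(x_1,\ldots,x_r)$ consisting of elements of $L^\times$, such that $z\in U_\wbf$ and the residue $\bar z$ of $z$ is not an $\ell$-th power in the residue field $K\wbf$. Granting such data, Lemma \ref{lemma:parshin-tame-symbols}(2) yields
\[
\{x_1,\ldots,x_r,z\}_\wbf = e_1\cdots e_r\cdot\{\bar z\}
\]
in $\k_1(K\wbf)$; arranging the prolongation to be $\ell$-unramified via Lemma \ref{lemma:mod-ell-ramification} makes $e_1\cdots e_r$ invertible modulo $\ell$, so the right-hand side is nonzero, and therefore $\{x_1,\ldots,x_r,z\}\neq 0$ in $\k_{r+1}(K)$.

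For the geometric construction I would fix a normal projective model $W$ of $L|k$ and a transcendence basis $s_1,\ldots,s_{d-r}$ of $K|L$, and let $Y$ be the normalization of $W\times\Pbb^{d-r}_k$ in $K$, so that we obtain a finite morphism $Y\to W\times\Pbb^{d-r}_k$ and a dominant morphism $\pi:Y\to W$ via the first projection. Pick a regular closed point $w\in W$ with regular system of parameters $x_1,\ldots,x_r\in\Oc_{W,w}\subset L^\times$ and a closed point $b\in\A^{d-r}_k\subset\Pbb^{d-r}_k$; the partial system $(x_1,\ldots,x_r)$ at $(w,b)$ yields an $r$-prime-divisor on $W\times\Pbb^{d-r}_k$ whose last term is the generic point of $\{w\}\times\Pbb^{d-r}_k$. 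Iterating Lemma \ref{lemma:prolong-r-prime-divs} together with Lemma \ref{lemma:mod-ell-ramification} then provides, for suitably generic $(w,b)$, an $\ell$-unramified prolongation to an $r$-prime-divisor $\Qbf$ on $Y$ whose last term $Q_r$ is the generic point of an irreducible component of the fiber $Y_w:=\pi^{-1}(w)$. Since the support of the principal divisor $(z)$ has codimension one in $Y$, for generic $w$ the element $z$ is a unit along the associated Parshin chain $\wbf$, with well-defined residue $\bar z=z|_{Q_r}\in k(Q_r)^\times$.

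The main obstacle is then the non-triviality condition $\bar z\notin k(Q_r)^{\times\ell}$, and this is where the hypothesis $z\notin L^\times K^{\times\ell}$ is used essentially, via a Kummer-descent argument. First one shows that $L$ is algebraically closed in $K':=K(z^{1/\ell})$: indeed, if not, the algebraic closure of $L$ in $K'$ would contain a degree-$\ell$ Kummer extension $L(u^{1/\ell})$ for some $u\in L^\times$, whence $K'=K(u^{1/\ell})$, and Kummer theory applied to $K$ would force $z\equiv u^j\pmod{K^{\times\ell}}$ for some $j$ coprime to $\ell$, contradicting the hypothesis. Since $L$ is therefore relatively algebraically closed in both $K$ and $K'$, the generic fibers of $\pi$ and of the analogous $\pi':Y'\to W$ (with $Y'$ the normalization of $W\times\Pbb^{d-r}_k$ in $K'$) are geometrically integral; hence on a dense open subset of $W$, both $Y_w$ and $Y'_w$ are geometrically integral. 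For such a generic $w$, the cover $Y'_w\to Y_w$ is the degree-$\ell$ Kummer cover with class $[\bar z]$; if $\bar z$ were an $\ell$-th power in $k(Y_w)$, this cover would split as a disjoint union of $\ell$ copies of $Y_w$, contradicting the integrality of $Y'_w$. Hence $\bar z\notin k(Y_w)^{\times\ell}$, and taking $Q_r=Y_w$ (irreducible by integrality) completes the construction.
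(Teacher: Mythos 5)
Your overall strategy mirrors the paper's: build a length-$r$ discrete Parshin chain on $K$ with a uniformizing system from $L^\times$, arrange $\ell$-unramifiedness, and control the residue $\bar z$ via a Kummer-descent argument showing $L$ is relatively algebraically closed in $K(z^{1/\ell})$. That Kummer-descent step is correct and is exactly the mechanism the paper uses. However, there is a genuine gap at the assertion that ``$L$ relatively algebraically closed in $K$ (resp.\ in $K'$) implies the generic fibers of $\pi$ (resp.\ $\pi'$) are geometrically integral.'' Geometric integrality of the generic fiber is equivalent to $K|L$ being a \emph{regular} extension, which requires $K|L$ to be separable and not merely $L$ to be relatively algebraically closed in $K$. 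In positive characteristic this can fail: with $\Char k = p>0$, take $K = k(s,t,x,y)$ subject to $y^p = s\,x^p + t$ and $L = k(s,t)$. One checks that $L$ is relatively algebraically closed in $K$, yet $K\otimes_L L^{1/p}$ is non-reduced (since $s x^p + t$ becomes a $p$\textsuperscript{th} power after adjoining $s^{1/p},t^{1/p}$). When $K|L$ is not regular, the fibers $Y_w$ over a dense open of $W$ are non-reduced, so ``integrality of $Y'_w$, $Y_w$'' is false and the splitting argument does not yield a contradiction.

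The paper sidesteps this by fixing a transcendence base $\tbf$ of $K|L$, passing to the maximal separable subextension $K_1$ of $K|L(\tbf)$ (and correspondingly $K_2 = K_1(\sqrt[\ell]{z})$), and replacing $z$ by its image in $K_1$; this is harmless because $K|K_1$ is purely inseparable, so $\k_*(K_1)\xrightarrow{\cong}\k_*(K)$ by Fact~\ref{fact:inseperable-milnor-isomorphism}. The extensions $K_1|L$ and $K_2|L$ \emph{are} regular (they are separable over $L(\tbf)$, which is purely transcendental over $L$), so Bertini--Noether and the geometric-integrality-of-fibers arguments apply; at the end, nonvanishing in $\k_{r+1}(K_1)$ transfers back to $\k_{r+1}(K)$ by the same Fact. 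Your proof is missing this reduction. (In characteristic zero the issue vanishes and your argument goes through as written.)
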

\begin{proof}
Let $A$ be a regular $k$-algebra of finite type over $k$, whose fraction field is $L$; thus $S := \Spec A$ is an affine regular $k$-variety which is a model for $L|k$.
Choose a transcendence base $\tbf = (t_1,\ldots,t_s)$ for $K|L$, and let $K_0 = L(\tbf)$.
Thus, $K|K_0$ is a finite extension, although it is not necessarily separable.
Let $K_1$ denote the maximal separable subextension of $K|K_0$ so that $K|K_1$ is purely inseparable.
Since $\k_1(K_1) \rightarrow \k_1(K)$ is an isomorphism (Fact \ref{fact:inseperable-milnor-isomorphism}), we may assume without loss of generality that $z \in K_1$.
We let $K_2 = K_1[\sqrt[\ell]{z}]$ and note that $K_2|K_1$ is $\Z/\ell$-Galois and that $K_2|K_0$ is separable.
Since $L$ is algebraically closed in $K_1$, the assumption that $z \notin L^\times \cdot K^{\times \ell}$ ensures that $L$ is also algebraically closed in $K_2$; hence $K_2|L$ is a regular extension.

Let $B_0 = A[\tbf]$ be the polynomial algebra over $A$ in the variables $\tbf$.
We denote by $B_i$ the normalization of $B_0$ in $K_i$ for $i = 1,2$, and we denote $\Spec B_i$ by $X_i$ for $i = 0,1,2$.
Thus we have canonical maps $X_2 \rightarrow X_1 \rightarrow X_0 \rightarrow S = \Spec A$, with $k(X_i) = K_i$, and $X_i$ normal in its function field.
For each $i = 0,1,2$, the extension $K_i|L$ is regular, and therefore the fibers of $X_i \rightarrow S$ are generically geometrically integral.
Thus, we may replace $S$ with an affine open (i.e. replacing $A$ by a localization of the form $A[1/f]$ for some non-zero $f \in A$) and assume without loss of generality that $(X_i)_s$, the fiber of $X_i \rightarrow S$ over $s \in S$, is integral for all $s \in S$ and for each $i = 0,1,2$.
In particular, $(X_1)_s$ is irreducible and we will let $\eta_s \in X_1$ denote the generic point of $(X_1)_s$ for any $s \in S$.

Recall that $K_2 = K_1[\sqrt[\ell]{z}]$, and that $B_2$ is the normalization of $B_1$ in $K_2$.
Thus, there exists some non-zero $f_0 \in B_1$ such that $z \in B_1[1/f_0]$ and $B_2[1/f_0] = B_1[1/f_0,\sqrt[\ell]{z}]$.
By replacing $S$ with an affine open, we may further assume without loss of generality that for all points $s$ of $S$, the fiber $(X_1)_s$ of $X_1 \rightarrow S$ is not contained in $V(f_0)$, the zero-locus of this $f_0$.
This implies that for all $s \in S$, one has $z \in \Oc_{X_1,\eta_s}$, and $k((X_2)_s) = k((X_1)_s)[\sqrt[\ell]{\bar z}]$, where $\bar z$ denotes the image of $z$ under the canonical map $\Oc_{X_1,\eta_s} \rightarrow k(\eta_s) = k((X_1)_s)$.

Since $K_2|L$ is a regular extension, by the Bertini-Noether theorem (cf. \cite{fried2006field} Proposition 9.4.3), we can replace $S$ by an affine open and assume without loss of generality that for all closed points $s$ of $S$, one has $[k((X_2)_s) : k((X_0)_s)] = [K_2:K_0]$.
In particular, this implies that for all closed points $s$ of $S$, one has $[k((X_2)_s):k((X_1)_s)] = [K_2:K_1] = \ell$.
By the discussion above, we deduce that for all closed points $s$ of $S$, the image $\bar z$ of $z$ in $k((X_1)_s))^\times$ is not an $\ell$-th power in $k((X_1)_s)^\times$.
In particular, note that this implies $z \in \Oc_{X_1,\eta_s}^\times$ for all closed points $s \in S$.

Now let $s$ be a closed point of $S$ and let $x_1,\ldots,x_r$ be a regular system of parameters for $s$.
Then $\xbf = (x_1,\ldots,x_r)$ is a \emph{partial system of regular parameters} of some closed point on $X_0$ which lies in $(X_0)_s$.
Thus, we may produce the associated $r$-prime divisor $\Pbf_\xbf$ on $X_0$ (cf. \S\ref{sub:regular-parameters}), as well as the associated discrete Parshin-chain $\vbf := \vbf_{\Pbf_\xbf}$ on $K_0 = L(\tbf)$.
We see from the construction that $\kappa(\vbf_\xbf)$ is precisely $k((X_0)_s)$.
Using Lemmas \ref{lemma:prolong-r-prime-divs} and \ref{lemma:mod-ell-ramification}, it is clear that we can choose the regular closed point $s$ of $S$ and the regular system of parameters $\xbf$ for $s$ in such a way so that the following two conditions hold:
\begin{enumerate}
	\item The Parshin-chain $\vbf_{\Pbf_\xbf} = \vbf$ is $\ell$-unramified in $K_1$.
	\item The $r$-prime divisor $\Pbf_\xbf$ on $X_0$ has a prolongation $\Qbf = (Q_1,\ldots,Q_r)$ which is an $r$-prime divisor on $X_1$ such that $Q_r = \eta_s$ is the generic point of $(X_1)_s$. 
\end{enumerate}
Setting $\wbf := v_\Qbf$, the discrete Parshin-chain associated to $\Qbf$ as in (2) above, we see that $\wbf$ is a prolongation of $\vbf$ to $K_1$, and that $\kappa(\wbf) = k((X_1)_s)$.
Furthermore, we have $z \in U_{\wbf}$ since $z \in \Oc_{X_1,\eta_s}^\times$.

Now let $\xbf = (x_1,\ldots,x_r)$ and $\wbf|\vbf$ be as above.
Setting $e(\wbf|\vbf) = (e_1,\ldots,e_r)$, we recall that for all $i = 1,\ldots,r$, the ramification index $e_i$ is not divisible by $\ell$, since $\vbf$ is $\ell$-unramified in $K_1$.
By Lemma \ref{lemma:parshin-tame-symbols}, we see that
\[ \{x_1,\ldots,x_r,z\}_\wbf = e_1 \cdots e_r \cdot \bar z \in \K_1(\kappa(\wbf)) = \K_1(k((X_1)_s)).\]
But $e_1 \cdots e_r$ is not divisible by $\ell$, and recall that $\bar z$ is not an $\ell$-th power in $\kappa(\wbf)^\times = k((X_1)_s)^\times$.
In particular, $\{x_1,\ldots,x_r,z\}_\wbf$ is non-trivial as an element of $\k_1(\kappa(\wbf))$.
We deduce that $\{x_1,\ldots,x_r,z\} \neq 0$ in $\k_{r+1}(K_1)$.
Finally, since $K|K_1$ is purely inseparable, Fact \ref{fact:inseperable-milnor-isomorphism} implies that $\{x_1,\ldots,x_r,z\} \neq 0$ in $\k_{r+1}(K)$.
This completes the proof of the proposition.
\end{proof}

\section{Graded Lattices}
\label{sec:graded_lattices}

A {\bf graded lattice} $\Lf^* = \coprod_{r \geq 0}\Lf^r$ is a partially ordered graded set which satisfies the following properties: 
\begin{enumerate}
	\item Every subset $S \subset \Lf^*$ has a least upper bound $\vee S$ and a greatest lower bound $\wedge S$. 
	Namely, $\Lf^*$ is a \emph{complete} lattice.
	\item If $a \in \Lf^r$ and $b \in \Lf^s$ are such that $a < b$, then $r < s$.
	Namely, the partial ordering of $\Lf^*$ respects the grading.
\end{enumerate}
An isomorphism of graded lattices $f^* : \Lf^*_1 \rightarrow \Lf^*_2$ is a graded bijection $f^* = \coprod_{i \geq 0} f^i$ on the underlying sets which respects the partial ordering.
The set of isomorphisms $\Lf^*_1 \rightarrow \Lf^*_2$ will be denoted by $\Isom^*(\Lf^*_1,\Lf^*_2)$.

\subsection{The graded lattice of algebraic closure}
\label{sub:algebraic_closure}

Let $K|k$ be an arbitrary extension of fields of finite transcendence degree and assume that $k$ is algebraically closed in $K$.
We define a graded lattice $\G^*(K|k) = \coprod_{r \geq 0} \G^r(K|k)$, as follows:
\begin{enumerate}
	\item $\G^r(K|k) := \{ \overline{k(t_1,\ldots,t_r)} \cap K \ : \ t_1,\ldots,t_r \in K, \ \trdeg(k(t_1,\ldots,t_r)|k) = r\}$.
	\item The partial ordering on $\G^*(K|k)$ is induced by inclusion of subextensions of $K|k$.
\end{enumerate}
Namely, $\G^*(K|k)$ is the partially ordered set of all relatively algebraically closed subextensions of $K|k$, while the graded component $\G^r(K|k)$ consists of the relatively algebraically closed subextensions of $K|k$ which have transcendence degree $r$ over $k$.

For a subset $S \subset\G^*(K|k)$, define $k(S)$ to be the compositum of the elements of $S$ as subfields of $K$, and let $\kappa_S := \overline{k(S)} \cap K$ be the algebraic closure of $k(S)$ in $K$.
We immediately see that $\kappa_S$ is an element of $\G^*(K|k)$, and that $\kappa_S = \vee S$ is the least upper bound of $S$.
The greatest lower bound $\wedge S$ of $S$ is simply $\cap S$, the intersection of the elements of $S$ considered as subfields of $K$ which contain $k$.

\subsection{The map $\Omega_K$ and algebraic closure mod-$\ell$}
\label{sub:algebraic_closure_mod_}

Let $K|k$ be an arbitrary extension of fields of finite transcendence degree and assume that $k$ is algebraically closed in $K$.
For a subset $S$ of $K$, we define $\Omega_K(S)$ to be the \emph{image} of the composition: 
\[ S \cap K^\times \hookrightarrow K^\times \twoheadrightarrow K^\times/\ell = \k_1(K). \]
Thus, $\Omega_K(S)$ is always a subset of $\k_1(K)$, and $\Omega_K$ is \emph{weakly monotone}: $S \subsetneq T \subset K$ implies $\Omega_K(S) \subset \Omega_K(T)$.

For a given $r \geq 0$, define
\[ \Gf^r(K|k) := \{ \Omega_K(L) \ : \ L \in \G^r(K|k) \}. \]
Thus, each $\Gf^r(K|k)$ is a collection of subgroups of $\k_1(K)$.
We furthermore define 
\[\Gf^*(K|k) := \bigcup_{r \geq 0} \Gf^r(K|k)\]
with the union being taken inside the power-set of $\k_1(K)$.
Namely, $\Gf^*(K|k)$ is also a collection of subgroups of $\k_1(K)$, \emph{without repetition}.
We consider $\Gf^*(K|k)$ as a partially ordered set where the ordering is given by inclusion of subgroups of $\k_1(K)$.

However, note that $\Gf^*(K|k)$ need not be a graded set whose graded components are $\Gf^r(K|k)$.
Namely, if there exist $L_1 \in \G^r(K|k)$ and $L_2 \in \G^s(K|k)$ with $r \neq s$ such that $\Omega_K(L_1) = \Omega_K(L_2)$, then $\Gf^r(K|k)$ and $\Gf^s(K|k)$ will have a non-trivial intersection.
Nevertheless, $\Omega_K$ is a surjective map $\G^*(K|k) \rightarrow \Gf^*(K|k)$ which (weakly) preserves the ordering: if $L_1,L_2 \in \G^*(K|k)$ with $L_1 < L_2$ in the ordering of $\G^*(K|k)$, then $\Omega_K(L_1) \leq \Omega_K(L_2)$ in the ordering of $\Gf^*(K|k)$.

\subsection{Function fields}
\label{sub:function-fields-lattices}

The following proposition shows that $\Gf^*(K|k)$ is actually a graded lattice which is isomorphic to $\G^*(K|k)$ in the special case where $K$ is a function field over an algebraically closed field $k$ of characteristic $\neq \ell$.

\begin{proposition}
\label{prop:alg_closure_mod_ell_lattice}
	Let $K$ be a function field over an algebraically closed field $k$ such that $\Char k \neq \ell$.
	Then the following hold:
	\begin{enumerate}
		\item Suppose that $L_1,L_2 \in \G^*(K|k)$ are given.
		Then $L_1 = L_2$ if and only if $\Omega_K(L_1) = \Omega_K(L_2)$.
		In particular, if $r \neq s$ then $\Gf^r(K|k)$ and $\Gf^s(K|k)$ are disjoint subsets of $\Gf^*(K|k)$, and thus $\Gf^*(K|k) = \coprod_{r \geq 0} \Gf^r(K|k)$ is a graded partially-ordered set.
		\item The graded partially ordered set $\Gf^*(K|k)$ is a graded lattice, and the map $\Omega_K : \G^*(K|k) \rightarrow \Gf^*(K|k)$ is an isomorphism of graded lattices.
	\end{enumerate}
\end{proposition}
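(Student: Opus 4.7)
The ``only if'' direction of (1) is immediate from the definition of $\Omega_K$, so the real work lies in the converse together with the grading and lattice claims.

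My first step would be to establish the Milnor-dimension computation: for every $L \in \G^r(K|k)$, one has $\dimm_K(\Omega_K(L)) = r$. The inequality $\dimm_K(\Omega_K(L)) \leq r$ follows from Lemma~\ref{lemma:milnor-dimension-of-function-fields} together with functoriality of $\k_*$, since any Milnor symbol $\{[y_1],\ldots,[y_s]\}$ in $\k_s(K)$ with $y_i \in L^\times$ is the image under $\k_s(L) \to \k_s(K)$ of the corresponding symbol in $\k_s(L)$, and $\k_s(L) = 0$ for $s > r$. The reverse inequality comes from choosing an algebraically independent system $t_1,\ldots,t_r \in L^\times$ and applying Lemma~\ref{lemma:independent-coords} to produce $a_i \in k$ with $\{t_1 - a_1,\ldots,t_r - a_r\} \neq 0$ in $\k_r(K)$. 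Consequently $\trdeg(L|k)$ is recovered from $\Omega_K(L) \subseteq \k_1(K)$: the $\Gf^r(K|k)$ are pairwise disjoint, $\Gf^*(K|k) = \coprod_{r \geq 0} \Gf^r(K|k)$ is graded, and $\Omega_K(L_1) = \Omega_K(L_2)$ forces $\trdeg(L_1|k) = \trdeg(L_2|k)$.

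The heart of the argument is the following Key Lemma: if $L \in \G^*(K|k)$ with $r := \trdeg(L|k) < d := \trdeg(K|k)$, and if $z \in K$ is transcendental over $L$, then there exists $a \in k$ such that $z - a \notin L^\times \cdot K^{\times \ell}$. To prove it, I would extend $\{z\}$ to a transcendence basis $(z, t_1, \ldots, t_{d-r-1})$ of $K|L$, so that $K$ is a finite algebraic extension of $K_0 := L(z, t_1, \ldots, t_{d-r-1})$. Fixing an affine model $S = \Spec A$ for the function field $L|k$, the affine $k$-variety $X_0 := \Spec A[z, t_1, \ldots, t_{d-r-1}]$ has function field $K_0$, and for each $a \in k$ the closed subscheme $\{z = a\} \subset X_0$ is irreducible; its generic point is a prime divisor on $X_0$ whose associated discrete valuation $v_a$ on $K_0$ satisfies $v_a(z - a) = 1$ and restricts trivially to $L$. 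By Lemma~\ref{lemma:mod-ell-ramification}(1), only finitely many prime divisors of $X_0$ fail to be $\ell$-unramified in $K$; since $k$ is infinite, one can choose $a \in k$ for which $\{z = a\}$ is $\ell$-unramified in $K$. For any prolongation $w$ of $v_a$ to $K$, the ramification index $e(w|v_a)$ is then coprime to $\ell$, while $w$ still restricts trivially to $L$. If $z - a = y \cdot u^\ell$ with $y \in L^\times$ and $u \in K^\times$, then $w(z - a) = w(y) + \ell \cdot w(u) = \ell \cdot w(u)$ would be divisible by $\ell$, contradicting $w(z - a) = e(w|v_a) \cdot v_a(z - a) = e(w|v_a) \not\equiv 0 \pmod{\ell}$. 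This establishes the Key Lemma; the main obstacle was finding $a$ for which the geometric valuation $v_a$ detects the obstruction in $K$ rather than only in $L(z)$, which is precisely what the $\ell$-unramified prime-divisor input from Lemma~\ref{lemma:mod-ell-ramification} supplies.

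Given the Key Lemma, (1) follows immediately: assuming $\Omega_K(L_1) = \Omega_K(L_2)$ but $L_1 \neq L_2$, by symmetry there exists $z \in L_1 \setminus L_2$; the case $\trdeg(L_2|k) = d$ forces $L_2 = K$ and hence $L_1 = K = L_2$ by the dimension computation, contradicting $L_1 \neq L_2$; so $\trdeg(L_2|k) < d$, $z$ is transcendental over $L_2$ (as $L_2$ is relatively algebraically closed in $K$), and the Key Lemma yields $a \in k$ with $z - a \notin L_2^\times \cdot K^{\times \ell}$, contradicting $z - a \in L_1^\times$ together with $\Omega_K(L_1) = \Omega_K(L_2)$. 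Essentially the same argument shows, more generally, that $\Omega_K(L_1) \subseteq \Omega_K(L_2)$ implies $L_1 \subseteq L_2$. Thus $\Omega_K : \G^*(K|k) \to \Gf^*(K|k)$ is tautologically surjective, graded, and order-preserving in both directions. Since $\G^*(K|k)$ is a complete graded lattice, $\Gf^*(K|k)$ inherits the same structure through $\Omega_K$, which proves (2).
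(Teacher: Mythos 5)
Your argument is correct, but it follows a genuinely different route from the paper. Both proofs compute $\dimm_K(\Omega_K(L)) = \trdeg(L|k)$ from Lemmas~\ref{lemma:milnor-dimension-of-function-fields} and~\ref{lemma:independent-coords}, which yields the grading. For injectivity, the paper then stays entirely inside the Milnor K-ring: assuming $L_1 \neq L_2$ in $\G^r(K|k)$ with $\Omega_K(L_1) = \Omega_K(L_2)$, the two fields are incomparable (both being relatively algebraically closed of the same transcendence degree), so $\trdeg(L_1 L_2 | k) > r$, and Lemma~\ref{lemma:independent-coords} produces a nonzero degree-$(r+1)$ symbol with entries drawn from $L_1 \cup L_2$, contradicting $\dimm_K(\Omega_K(L_1)) = r$. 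You instead isolate a valuation-theoretic Key Lemma whose proof is closer in spirit to that of Proposition~\ref{prop:a1-ar-z-non-zero} than to Lemma~\ref{lemma:independent-coords}: a single $\ell$-unramified prime divisor $\{z = a\}$ on an affine model over $L$ directly detects the degree-$1$ obstruction $z - a \notin L^\times \cdot K^{\times \ell}$, with no appeal to higher Milnor K-theory in the injectivity step. This is more elementary, and your Key Lemma carries a useful extra consequence: it immediately gives order-reflection, $\Omega_K(L_1) \subset \Omega_K(L_2) \Rightarrow L_1 \subset L_2$, which is precisely what part (2) needs in order to conclude that $\Omega_K$ is a lattice isomorphism rather than merely a monotone graded bijection; the paper's proof of (2) leaves this step somewhat implicit. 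The trade-off is that the paper's argument reuses Lemma~\ref{lemma:independent-coords}, a tool already doing work throughout \S\ref{sec:recovering-the-mod-ell-algebraic-lattice}, while your Key Lemma is a separate, albeit short, additional ingredient.
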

\begin{proof}
\noindent{\emph{Proof of (1):}}
Say $r$ and $s$ are non-negative integers such that $L_1 \in \G^r(K|k)$ and $L_2 \in \G^s(K|k)$, and assume that $\Omega_K(L_1) = \Omega_K(L_2)$.
By Lemmas \ref{lemma:milnor-dimension-of-function-fields} and \ref{lemma:independent-coords}, we see that $\dimm_K(\Omega_K(L_1)) = r$ and $\dimm_K(\Omega_K(L_2)) = s$, where $\dimm_K$ is as defined in \S\ref{sub:milnor_dimension}.
Since $\Omega_K(L_1) = \Omega_K(L_2)$, we deduce that $r = s$.

Now assume for a contradiction that $L_1 \neq L_2$.
Since $r = s$, this implies that the fields $L_1$ and $L_2$ are incomparable as subfields of $K$ because $L_1$ and $L_2$ are algebraically closed in $K$.
In particular, $\trdeg(L_1 L_2 | k) > r$.
Thus, by Lemma \ref{lemma:independent-coords}, there exist $x_1,\ldots,x_r \in L_1$ and $y \in L_2$ such that $\{x_1,\ldots,x_r,y\} \neq 0$ in $\k_{r+1}(K)$.
Since $\Omega_K(L_1) = \Omega_K(L_2)$, this implies that $\dimm_K(\Omega_K(L_1)) > r$, which contradicts the fact that $\dimm_K(\Omega_K(L_1)) = r$.

\vskip 5pt
\noindent{\emph{Proof of (2):}}
By assertion (1), we know that $\Gf^*(K|k)$ is a graded partially ordered set whose graded components are $\Gf^r(K|k)$.
It's clear from the definition that $\Omega_K : \G^*(K|k) \rightarrow \Gf^*(K|k)$ is a surjective map which respects the grading, and by assertion (1) this map is also injective.
Finally, it's clear from the definition that $\Omega_K$ respects the partial ordering.
Since $\G^*(K|k)$ is a graded lattice, and $\Omega_K : \G^*(K|k) \rightarrow \Gf^*(K|k)$ is a bijection which respects the grading and partial ordering, we deduce that $\Gf^*(K|k)$ is also a graded lattice, and that $\Omega_K$ is actually an isomorphism of graded lattices.
\end{proof}

\section{Recovering the Mod-$\ell$ Lattice}
\label{sec:recovering-the-mod-ell-algebraic-lattice}

Let $K$ be a function field over an algebraically closed field $k$ such that $\Char k \neq \ell$ and recall that $\Gf^*(K|k)$ is a graded lattice by Proposition \ref{prop:alg_closure_mod_ell_lattice}.
In this section we will show how to reconstruct this graded lattice using the mod-$\ell$ Milnor K-theory of $K$ along with the collection of rational subgroups of $\k_1(K)$.
We begin by recalling the definition of \emph{general elements} (terminology due to {\sc Pop} \cite{Pop2012a}, \cite{Pop2011}) and \emph{rational subgroups}.

An element $t \in K \smallsetminus k$ is called a {\bf general element} of $K$ provided that $k(t)$ is algebraically closed in $K$.
Note that this condition is not intrinsic to the element $t$, but rather it completely depends on the ambient field $K$.
Nevertheless, if $L$ is a subextension of $K|k$ such that $L$ is algebraically closed in $K$ and $t \in L \smallsetminus k$, then $t$ is general in $L$ if and only if $t$ is general in $K$.
Also, note that if $t$ is a general element of $K$, then $t^{-1}$, $t+a$ and $a \cdot t$ are also general elements of $K$ for all $a \in k^\times$.

If $M|F$ is an extension of arbitrary fields, we say that an element $x \in M$ is {\bf separable in $M$ over $F$} if $x \notin M^p F$ where $p = \Char F$.
By convention, if $\Char F = 0$, then every $x \in M$ is separable in $M$ over $F$.
If $F$ is understood from context, we may also omit the ``over $F$'' from the terminology.
Note in particular that if $F$ is perfect and $\Char F = p > 0$, then $x \in M$ is separable in $M$ (over $F$) if and only if $x \notin M^p$.
It is important to note that this terminology differs from the usual notion of a \emph{separable element} in an algebraic extension.
Nevertheless, this terminology was introduced by {\sc Pop} in \cite{Pop2012a}, so we use it here for the sake of consistency.

In the case where $K$ is a function field over an algebraically closed field $k$, we see that any general element of $K$ is also separable in $K$.
In fact, any non-constant element of $K$ is a power of some element of $K$ which is separable in $K$.
Moreover, if $L$ is a relatively algebraically closed subextension of $K|k$ and $t \in L$, then $t$ is separable in $L$ if and only if $t$ is separable in $K$.
The existence of many general elements is guaranteed by the following so-called \emph{Birational Bertini Theorem}, which works in the more general situation of regular function fields over arbitrary infinite fields.

\begin{fact}[Birational Bertini Theorem -- cf. \cite{lang1972introduction} Ch. VIII, pg. 213]
\label{fact:birational-bertini}
Let $F$ be an arbitrary infinite field and let $M$ be a regular finitely-generated field extension of $F$.
Let $x,y \in M$ be algebraically independent over $F$ with $x$ separable in $M$.
Then for all but finitely many $a \in F$, the field $M$ is a regular extension of $F(ax+y)$, and in particular $F(ax+y)$ is algebraically closed in $M$.
\end{fact}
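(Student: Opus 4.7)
The plan is to use a generic-parameter and specialization argument. First I would introduce an indeterminate $s$ over $M$, form the auxiliary field $M^* := M(s)$, and consider the element $t := sx+y \in M^*$ together with the subfield $L := F(s,t) \subset M^*$. Since $x,y$ are algebraically independent over $F$ and $s$ is transcendental over $M$, the pair $(s,t)$ is algebraically independent over $F$, so $L$ is purely transcendental of transcendence degree $2$, and the assertion reduces (via specialization $s \mapsto a$) to showing that $M^*/L$ is a regular extension.

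To establish algebraic closedness of $L$ in $M^*$, I would first observe that $L(x) = F(s,t,x) = F(s,x,y)$ is a rational function field over $L$ (with $x$ transcendental over $L$ and $y = t-sx$), so $L$ is algebraically closed in $L(x)$. Using the regularity of $M/F$ together with the fact that $s$ is transcendental over $M$, a base-change argument identifies the algebraic closure of $F(x,y)(s)$ in $M(s) = M \otimes_F F(s)$ with $N(s)$, where $N$ denotes the (finite) algebraic closure of $F(x,y)$ in $M$. A linear-disjointness argument between the finite extension $N(s)/L(x)$ and the rational extension $L(x)/L$, combined with a descent to $\bar F$ reducing to the classical Bertini theorem for pencils over an algebraically closed field, then forces the algebraic closure of $L$ in $N(s)$ to equal $L$. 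Separability of $M^*/L$ is automatic in characteristic $0$; in positive characteristic $p$ it follows from the hypothesis $x \notin M^p F$ via MacLane's criterion, which upgrades the single-element separability of $x$ in $M$ over $F$ to separability of the full extension $M^*/L$ after base change from $F$ to $F(s)$ and the reparameterization via $t = sx+y$.

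Once $M^*/L$ is known to be regular, I would specialize: a finite collection of witnessing data (generators, minimal polynomials, and denominators certifying the regularity of $M^*/L$) yields a nonzero polynomial in $F[s]$ whose nonvanishing at $a \in F$ guarantees that the substitution $s \mapsto a$ is defined and that the regularity of $M^*/L$ descends to regularity of $M$ over $F(ax+y)$. Since $F$ is infinite, the complement of the finite zero locus is nonempty, which yields the statement.

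The main obstacle is the algebraic-closedness step: the specific linear form $sx+y$ is essential, and its justification ultimately relies on Bertini's theorem for pencils of hyperplanes on a geometrically integral variety. In positive characteristic, the separability half is the other delicate ingredient — this is precisely where the hypothesis that $x$ is separable in $M$ over $F$ is unavoidable, and where MacLane's derivation-based criterion must be carefully deployed to bridge from a property of the single element $x$ to a global statement about the field extension $M^*/L$.
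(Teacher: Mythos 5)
The paper does not prove this statement; it is labeled a Fact and attributed to Lang's \emph{Introduction to Algebraic Geometry} (Ch.~VIII, p.~213), so there is no internal proof to compare your argument against. Your outline follows what is indeed the standard generic-parameter strategy for such birational Bertini results: adjoin a transcendental $s$, set $t = sx+y$, prove $M(s)/F(s,t)$ regular, and then specialize $s \mapsto a$, arguing that the set of bad $a$ is finite. Your preliminary reductions are correct: $L(x) = F(s,x,y)$ is purely transcendental over $L = F(s,t)$, so $L$ is relatively algebraically closed in $L(x)$; and the relative algebraic closure of $F(x,y)(s)$ in $M(s)$ is $N(s)$ (though note $M(s)$ is the fraction field of $M\otimes_F F(s)$, not the tensor product itself, and this identification itself requires a short argument).

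The step you flag as the main obstacle, however, is where the proof actually lives, and as written it does not describe a functioning argument. Speaking of a ``linear-disjointness argument between the finite extension $N(s)/L(x)$ and the rational extension $L(x)/L$'' is not meaningful: these are consecutive layers of a single tower $L\subset L(x)\subset N(s)$, not two extensions of a common base, so there is no linear-disjointness condition to invoke between them. What is really needed is the irreducibility of the generic member of the pencil $sx+y=c$ on a model of $N$ over $F$, and that \emph{is} Bertini --- it is not extracted from the surrounding reductions, it is the theorem. Similarly, the MacLane separability step is asserted rather than argued (one must actually exhibit a separating transcendence base of $M(s)/L$, and this uses $x\notin M^pF$ in a specific way, e.g.\ via the non-vanishing of $dx$ in $\Omega_{M/F}$), and the closing specialization step --- that regularity of $M(s)$ over the transcendence-degree-two field $F(s,t)$ descends, via the place $s\mapsto a$, to regularity of $M$ over the transcendence-degree-one field $F(ax+y)$ for all but finitely many $a$ --- is a nontrivial good-reduction statement that needs its own proof. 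In short, the architecture matches the classical proof, but the two load-bearing components (Bertini over $\bar F$ and the specialization of regularity) are gestured at rather than established.
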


Going back to the situation where $K$ is a function field over an algebraically closed field $k$ such that $\Char k \neq \ell$, we say that $A$ is a {\bf rational subgroup} of $\k_1(K)$ provided that there exists a general element $t$ of $K$ such that $A = \Omega_K(k(t))$.
The collection of rational subgroups of $\k_1(K)$ is denoted by $\Gfor(K|k)$, and we note that $\Gfor(K|k)$ is a subset of $\Gf^1(K|k)$, in the notation of \S\ref{sub:algebraic_closure_mod_}.

\subsection{Recovering higher-dimensional subsets}
\label{subsection:recovering-higher-dimensionals-subsets}

In this subsection we show how to recover $\Gf^r(K|k)$ for $r \geq 2$ using $\k_*(K)$ and $\Gfor(K|k)$.
We first need a lemma which easily follows from Fact \ref{fact:birational-bertini}.

\begin{lemma}
\label{lemma:birational-bertini-reformulation}
Let $K$ be a function field over an algebraically closed field $k$, and let $L$ be a subextension of $K|k$ such that $\trdeg(L|k) \geq 2$ and $L$ is algebraically closed in $K$.
Then any element of $L^\times$ is a product of two elements of $L$ which are general in $K$.
\end{lemma}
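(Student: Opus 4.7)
The plan is to use the Birational Bertini Theorem (Fact~\ref{fact:birational-bertini}) to produce the required multiplicative decomposition. My first observation is that it suffices to find $u, v \in L$ which are general \emph{in $L$} with $uv = z$: since $L$ is algebraically closed in $K$, any $t \in L \setminus k$ with $k(t)$ algebraically closed in $L$ automatically has $k(t)$ algebraically closed in $K$. The case $z \in k^\times$ is then immediate --- pick any general $u \in L$ (which exists by Bertini applied inside $L$, using $\trdeg(L|k) \geq 2$), and set $v := z/u$; then $k(v) = k(u^{-1}) = k(u)$ because $z \in k^\times$, so $v$ is also general.

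For the main case $z \in L \setminus k$, which forces $z$ to be transcendental over $k$, the strategy is to first construct an auxiliary element $w \in L$ satisfying (i) $w, z$ are algebraically independent over $k$, (ii) $k(w)$ is algebraically closed in $L$, and (iii) $wz$ is separable in $L$ (vacuous in characteristic zero). Given such a $w$, I would set $y := wz$ and apply Fact~\ref{fact:birational-bertini} with $F = k$, $M = L$, $x_1 = y$, $x_2 = z$; its hypotheses are met by (i) and (iii), and the conclusion yields infinitely many $a \in k^\times$ for which $k(ay + z)$ is algebraically closed in $L$. For any such $a$, I would take
\[
u := ay + z = z(aw + 1), \qquad v := z/u = (aw + 1)^{-1}.
\]
Then $uv = z$, the field $k(u)$ is algebraically closed in $L$ by Bertini, and $k(v) = k(aw + 1) = k(w)$ is algebraically closed in $L$ by (ii), so both $u$ and $v$ are general in $L$.

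It remains to produce $w$. Since $k$ is perfect and $\trdeg(L|k) \geq 2$, one can pick algebraically independent $x_1, x_2 \in L$ with $x_1$ separable in $L$, drawn from a separating transcendence basis of $L|k$. Bertini then gives that $k(w_0)$ is algebraically closed in $L$ for $w_0 := a x_1 + x_2$ and all but finitely many $a \in k$; a short linear-algebra argument, using that $\overline{k(z)} \cap L$ has transcendence degree one over $k$ and hence is a proper $k$-subspace of $L$, excludes at most one further value of $a$ to ensure that $w_0$ and $z$ are algebraically independent. This produces a $w_0$ satisfying (i) and (ii). To upgrade to (iii), I would replace $w_0$ by $w := w_0 + \lambda$ for a suitable $\lambda \in k$: conditions (i) and (ii) are preserved, and in positive characteristic $p$ the class of $wz$ in the $k$-vector space $L/L^p$ depends affinely on $\lambda$, so at most one $\lambda$ can fail --- except in the degenerate case $z \in L^p$, where $wz \in L^p$ is equivalent to $w \in L^p$ and so (iii) is handled by the separate observation that any general element of $L$ is automatically separable in $L$: if $w_0 = u_0^p$ with $u_0 \in L$, then $u_0$ is algebraic over $k(w_0)$ and hence lies in $k(w_0)$, and writing $u_0 = P(w_0)/Q(w_0)$ forces the impossible degree identity $p \deg P - p \deg Q = 1$.

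The main technical obstacle is arranging (iii) simultaneously with (i) and (ii) in positive characteristic when $z$ is itself inseparable in $L$; the affine-shift trick together with the degree-theoretic remark that general elements are automatically separable in $L$ resolves this cleanly.
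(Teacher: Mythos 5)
Your proof is correct, and it takes a recognizably similar but genuinely distinct route. Both arguments reduce immediately to working inside $L$ (which the paper does implicitly via its remark that general in $L$ implies general in $K$ when $L$ is algebraically closed in $K$), handle the constant case $z\in k^\times$ in the same trivial way, and then apply the Birational Bertini theorem twice. The difference is in how the two Bertini steps are combined. The paper takes the helper $y$ general and separable (from Bertini step one), then applies Bertini to the pair $(1/x,\,y/x)$ — observing that one of these must be separable because $y=(y/x)/(1/x)$ is — to get $(y+a)/x$ general, and finally decomposes $x=\bigl(x/(y+a)\bigr)\cdot(y+a)$. You instead take a helper $w$ general with $w,z$ independent, engineer $wz$ to be separable, apply Bertini to $(wz,\,z)$ to get $u:=a(wz)+z=z(aw+1)$ general, and decompose $z=u\cdot(aw+1)^{-1}$ using $k\bigl((aw+1)^{-1}\bigr)=k(w)$. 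The structural contrast is that the paper arranges separability for its second Bertini input \emph{by elimination} (at least one of $1/x$, $y/x$ is automatically separable, no further adjustment needed), whereas you arrange it \emph{by construction} via the affine-shift $w_0\mapsto w_0+\lambda$ in $L/L^p$, supplemented by the case analysis on whether $z\in L^p$ and the observation that general elements are automatically separable. The paper's elimination trick is slicker and avoids the characteristic-$p$ case split; your version is a bit longer but equally valid, and the "general $\Rightarrow$ separable" degree argument you invoke is the same one the paper states without proof just before Fact~\ref{fact:birational-bertini}.
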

\begin{proof}
First suppose that $a \in k^\times$.
Since $\trdeg(L|k) \geq 2$, by Fact \ref{fact:birational-bertini}, there exists an element $t \in L$ such that $t$ is general in $K$.
Observe that $t^{-1} \cdot a$ is also general in $K$ and thus $a = t \cdot (t^{-1} \cdot a)$ is a product of two elements of $L$ which are general in $K$.

Now let $x \in L^\times \smallsetminus k^\times$ be given.
Since $\trdeg(L|k) \geq 2$, we may choose $y \in L$ which is algebraically independent from $x$ over $k$.
Moreover, since $L$ is relatively algebraically closed in $K$ and since any non-constant element of $K$ is a power of an element which is separable in $K$, we may assume without loss of generality that $y$ is separable in $K$.
By Fact \ref{fact:birational-bertini}, we may replace $y$ by $x+ay$, for some $a \in k^\times$, and assume furthermore that $y$ is general in $K$.
Namely, $y$ is general (hence separable) in $K$, and $x,y$ are algebraically independent over $k$.
Note that $1/x$ or $y/x$ must be separable since $y$ is separable.
Thus, using Fact \ref{fact:birational-bertini} again, we see that there exists some $a \in k$ such that $(y/x) + a \cdot (1/x) = (y+a)/x$ is also general in $K$.
On the other hand, since $y$ is general in $K$, the element $y+a$ is also general in $K$.
Moreover, since $(y+a)/x$ is general in $K$, its inverse $x/(y+a)$ is also general in $K$.
Thus, $x = (x/(y+a)) \cdot (y+a)$ is a product of two elements of $L$ which are general in $K$.
\end{proof}

\begin{proposition}
\label{prop:recovering-higher-dimensions}
Let $K$ be a function field over an algebraically closed field $k$ such that $\Char k \neq \ell$ and assume that $\trdeg(K|k) \geq 2$.
Let $A$ be a subgroup of $\k_1(K)$ and let $r \geq 2$ be given.
Then the following are equivalent:
\begin{enumerate}
	\item One has $A \in \Gf^r(K|k)$.
	\item The subgroup $A$ is maximal among subgroups $B$ of $\k_1(K)$ which satisfy the following two conditions:
	\begin{enumerate}
		\item There exist a subset $S \subset \Gfor(K|k)$ such that $B = \langle T  \ : \ T \in S \rangle$.
		\item One has $\dimm_K(B) = r$.
	\end{enumerate}
\end{enumerate}
\end{proposition}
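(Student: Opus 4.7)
The plan is to translate the transcendence-degree structure of $\G^*(K|k)$ into Milnor-theoretic conditions on subgroups of $\k_1(K)$, using Lemmas \ref{lemma:milnor-dimension-of-function-fields}, \ref{lemma:independent-coords}, and \ref{lemma:birational-bertini-reformulation}. The central observation that drives both directions is the following \emph{dimension identity}: for any relatively algebraically closed subextension $L$ of $K|k$ with $\trdeg(L|k) = s$, one has $\dimm_K(\Omega_K(L)) = s$. The upper bound follows from the functoriality of $\k_*$ for $L \hookrightarrow K$ combined with $\k_{s+1}(L) = 0$ (Lemma \ref{lemma:milnor-dimension-of-function-fields}), while the lower bound comes from applying Lemma \ref{lemma:independent-coords} to a transcendence basis of $L|k$. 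A second useful fact, available whenever $s \geq 2$, is that Lemma \ref{lemma:birational-bertini-reformulation} expresses $\Omega_K(L)$ as the subgroup of $\k_1(K)$ generated by the rational subgroups $\Omega_K(k(t))$ with $t$ a general element of $K$ lying in $L$; in particular $\Omega_K(L)$ itself automatically satisfies condition (a).

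For $(1) \Rightarrow (2)$, I would write $A = \Omega_K(L)$ with $L \in \G^r(K|k)$, and observe that (a) and (b) for $A$ are immediate from the preceding paragraph. For maximality, suppose $B \supsetneq A$ also satisfies (a) and (b); then $B$ must contain some rational subgroup $T = \Omega_K(k(t)) \not\subseteq A$, which forces $k(t) \not\subseteq L$, and since $L$ is relatively algebraically closed in $K$ this means $t$ is transcendental over $L$. Fixing a transcendence basis $t_1,\ldots,t_r$ of $L|k$ and applying Lemma \ref{lemma:independent-coords} to the subextensions $k(t_1),\ldots,k(t_r),k(t)$ (whose compositum has transcendence degree $r+1$ in $K$) would then produce elements $x_i \in k(t_i) \subseteq L$ for $i \leq r$ and $x_{r+1} \in k(t)$ with $\{x_1,\ldots,x_{r+1}\} \neq 0$ in $\k_{r+1}(K)$. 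Since all $r+1$ factors have images in $A \cup T \subseteq B$, this would give $\dimm_K(B) \geq r+1$, contradicting (b).

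For $(2) \Rightarrow (1)$, I would start with $A$ maximal satisfying (a) and (b) and enlarge the generating set in (a) to the collection of \emph{all} rational subgroups of $\k_1(K)$ contained in $A$, writing $A = \langle \Omega_K(k(t_j)) : j \in J\rangle$ where $\{t_j\}_{j \in J}$ ranges over the general elements $t$ of $K$ with $\Omega_K(k(t)) \subseteq A$. Set $L := \overline{k(\{t_j\}_{j \in J})} \cap K \in \G^s(K|k)$; then $A \subseteq \Omega_K(L)$, so the dimension identity gives $s \geq \dimm_K(A) = r$. If $s \geq r+1$, I would extract $r+1$ algebraically independent elements from $\{t_j\}$ and repeat the previous paragraph's Lemma \ref{lemma:independent-coords} argument to obtain a nonzero length-$(r+1)$ Milnor symbol all of whose factors lie in $A$, contradicting $\dimm_K(A) = r$. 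Hence $s = r$, and $\Omega_K(L)$ satisfies both (a) (by Lemma \ref{lemma:birational-bertini-reformulation}, applicable since $r \geq 2$) and (b); the maximality of $A$ then forces $A = \Omega_K(L) \in \Gf^r(K|k)$.

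The main subtlety, symmetric to both directions, is that the Lemma \ref{lemma:independent-coords} argument in each step depends on producing a new generator transcendental over the ``old'' field $L$; the hypothesis $r \geq 2$ is essential for the $(2) \Rightarrow (1)$ direction in order to invoke Lemma \ref{lemma:birational-bertini-reformulation} and verify that the candidate $\Omega_K(L)$ itself is generated by rational subgroups, without which the maximality step would collapse. The translation from transcendence degree to Milnor dimension via $\dimm_K(\Omega_K(L)) = s$ is precisely what makes the otherwise abstract ``maximality'' condition equivalent to the geometric statement $L \in \G^r(K|k)$.
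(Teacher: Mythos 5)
Your proposal is correct and follows essentially the same strategy as the paper's proof: verify (a) via Lemma \ref{lemma:birational-bertini-reformulation}, verify (b) and the ``dimension identity'' $\dimm_K(\Omega_K(L)) = \trdeg(L|k)$ via Lemmas \ref{lemma:milnor-dimension-of-function-fields} and \ref{lemma:independent-coords}, and in both directions obtain a contradiction from a nonzero length-$(r+1)$ symbol produced by Lemma \ref{lemma:independent-coords}. The only stylistic difference is that you explicitly split the case $s > r$ from $s < r$ in $(2)\Rightarrow(1)$ and enlarge the generating set $S$ to all rational subgroups in $A$, whereas the paper works directly with the given generating set; these are cosmetic variations on the same argument.
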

\begin{proof}
\vskip 5pt
\noindent{$(1) \Rightarrow (2)$:}
Say $L \in \G^r(K|k)$ is given such that $A = \Omega_K(L)$.
Setting $S_A = \{ T \in \Gfor(K|k) \ : \ T \subset A\}$, it follows from Lemma \ref{lemma:birational-bertini-reformulation} that $A = \langle T  \ : \ T \in S_A \rangle$.
Thus condition (a) holds for $A$.
Also, condition (b) holds for $A$ by Lemmas \ref{lemma:milnor-dimension-of-function-fields} and \ref{lemma:independent-coords}.

We must now show that $A$ is maximal with respect to conditions (a) and (b).
Suppose that $B$ also satisfies (a) and (b) and that $A \subset B$.
Assume for a contradiction that $A \neq B$.
Thus, by condition (a), there exists some $T_0 \in \Gfor(K|k)$ such that $T_0 \not\subset A$ and $T_0 \subset B$.
Let $F_0 \in \G^1(K|k)$ be given such that $\Omega_K(F_0) = T_0$.
Since $T_0 \not\subset A$, we see that $F_0$ is not contained in $L$.
Furthermore, since $L$ is algebraically closed in $K$, we deduce that $\trdeg(F_0 L|k) = r+1$.
By Lemma \ref{lemma:independent-coords}, we see that there exist $x_1,\ldots,x_r \in L$ and $y \in F_0$ such that $\{x_1,\ldots,x_r,y\} \neq 0$ in $\k_{r+1}(K)$.
But this implies that $\dimm_K(B) > r$, which contradicts condition (b).
Having obtained our contradiction, we deduce that $A$ is indeed maximal among subgroups of $\k_1(K)$ which satisfy conditions (a), (b).

\vskip 5pt
\noindent{$(2) \Rightarrow (1)$:} 
Suppose that $A$ is maximal among subgroups satisfying conditions (a) and (b).
Let $S$ be a subset of $\Gfor(K|k)$ such that $A = \langle T \ : \ T \in S \rangle$.
For each $T \in S$, choose a $t_T \in K \smallsetminus k$ which is general in $K$ such that $T = \Omega_K(k(t_T))$.
Also, let $M$ be the subfield of $K$ generated by $k$ and all these $t_T$ as $T \in S$ varies.
Note that $A \subset \Omega_K(M)$ by condition (a).
By Lemmas \ref{lemma:milnor-dimension-of-function-fields} and \ref{lemma:independent-coords}, we see that condition (b) implies $\trdeg(M|k) = r$.
Thus, there exists some $L \in \G^r(K|k)$ such that $M \subset L$, and thus $A \subset \Omega_K(L)$.
On the other hand, $\Omega_K(L)$ satisfies conditions (a) and (b) by the argument above.
The maximality of $A$ implies that $A = \Omega_K(L)$ and thus $A \in \Gf^r(K|k)$.
\end{proof}

\subsection{Recovering one-dimensional subsets}
\label{subsection:recovering-points}

The next proposition shows how to recover $\Gf^1(K|k)$ using $\Gf^r(K|k)$ for $r = 2,3$, the ring $\k_*(K)$, and $\Gfor(K|k)$.
To state this proposition, we recall from \S\ref{sub:milnor_dimension} that $\Mf^1(K)$ denotes the collection of subgroups $A$ of $\k_1(K)$ which are maximal among subgroups $B$ of $\k_1(K)$ such that $\dimm_K(B) = 1$.

\begin{proposition}
\label{prop:recovering-points}
Let $K$ be a function field over an algebraically closed field $k$ such that $\Char k \neq \ell$, and assume that $\trdeg(K|k) \geq 4$.
Let $A$ be a subgroup of $\k_1(K)$.
Then the following are equivalent:
\begin{enumerate}
	\item One has $A \in \Gf^1(K|k)$.
	\item One has $A \in \Mf^1(K)$, and there exist some $D \in \Gfor(K|k)$, $B_1,B_2, E \in \Gf^2(K|k)$, and $B_1', B_2', C \in \Gf^3(K|k)$ such that the following conditions hold:
	\begin{enumerate}
		\item $B_1 \neq B_2$ and $D \not\subset C$.
		\item $B_1 \cup B_2 \subset C$.
		\item $B_1 \cup D \subset B'_1$ and $B_2 \cup D \subset B'_2$.
		\item $E \subset B_1' \cap B_2'$.
		\item $A = B_1 \cap B_2$.
	\end{enumerate}
\end{enumerate}
\end{proposition}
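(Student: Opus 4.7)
In both directions, the key technical step will be the following tame-symbol identity:
\[
\Omega_K(F_1) \cap \Omega_K(F_2) \;=\; \Omega_K(F_1 \cap F_2)
\]
for the appropriate $F_1, F_2 \in \G^2(K|k)$, which, together with the lattice isomorphism of Proposition \ref{prop:alg_closure_mod_ell_lattice}, encodes both implications.

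For $(1) \Rightarrow (2)$, I will write $A = \Omega_K(F)$ for some $F \in \G^1(K|k)$. The membership $A \in \Mf^1(K)$ follows from a sandwich: Lemma \ref{lemma:milnor-dimension-of-function-fields} gives $\dimm_K(A) \leq 1$, Lemma \ref{lemma:independent-coords} gives $\dimm_K(A) \geq 1$, and maximality follows from Proposition \ref{prop:a1-ar-z-non-zero}, since any $z$ in a strict extension of $A$ of Milnor dimension $1$ would satisfy $z \notin F^\times \cdot K^{\times\ell}$, producing $x \in F^\times$ with $\{x,z\} \neq 0$. Using $\trdeg(K|k) \geq 4$ and Fact \ref{fact:birational-bertini}, I will then pick $s_1, s_2, s_3 \in K$ algebraically independent over $F$ with each $F(s_i)$ relatively algebraically closed in $K$ and with $s_3$ general over $k$, and set $F_i := F(s_i)$, $F_i' := \overline{F(s_i,s_3)} \cap K$, $G := \overline{F(s_1,s_2)} \cap K$, $H := \overline{F(s_3)} \cap K$, $D_0 := k(s_3)$. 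Transferring through $\Omega_K$ yields $B_i, B_i', C, E, D$; conditions (a)--(d) then reduce to the corresponding field-theoretic inclusions via Proposition \ref{prop:alg_closure_mod_ell_lattice}. Condition (e) is the tame-symbol identity above: for $f_1 \in F_1^\times, f_2 \in F_2^\times$ with $f_1 = f_2 g^\ell$, I will show $v(f_2) \equiv 0 \pmod{\ell}$ for every divisorial valuation $v$ of the $F$-rational curve $F_2 = F(s_2)$ by constructing an $\ell$-unramified prolongation $w$ of $v$ to $K$ that is trivial on $F_1$ (Gauss-extending along a transcendence basis of $K/F_2$ containing $s_1$, then picking an $\ell$-unramified prolongation through any finite extension via Lemma \ref{lemma:mod-ell-ramification}); once $v(f_2) \equiv 0 \pmod{\ell}$ for all such $v$, the rationality of $F_2$ over $F$ forces $f_2 \in F^\times \cdot F_2^{\times\ell}$, so $[f_1] = [f_2] \in \Omega_K(F) = A$.

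For $(2) \Rightarrow (1)$, I will use Proposition \ref{prop:alg_closure_mod_ell_lattice} to lift the configuration to subextensions $F_i, F_i', G, H, D_0$ of $K|k$, and set $F := F_1 \cap F_2$. Since $F_1 \neq F_2$ lie in $\G^2(K|k)$, they are incomparable, so $F \subsetneq F_i$ and $\trdeg(F|k) \leq 1$. I will then establish the same tame-symbol identity $\Omega_K(F_1) \cap \Omega_K(F_2) = \Omega_K(F)$ in the abstract setting: here conditions (b)--(d) supply the geometric raw material---the ambient $G$ giving $\trdeg(F_1 F_2|k) \leq 3$, the independent transcendental direction $D_0$, and the degree-two intersection $H \subseteq F_1' \cap F_2'$ witnessing the common extensions $F_i \cdot D_0$---needed to construct the $\ell$-unramified prolongations of divisorial valuations of $F_2$ that are trivial on $F_1$. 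Combining this with (e) yields $A = \Omega_K(F)$. Since $A \in \Mf^1(K)$ is nontrivial, $F \neq k$, so $\trdeg(F|k) = 1$ and $A \in \Gf^1(K|k)$.

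The main obstacle will be the tame-symbol identity. In the forward direction it is essentially routine, since we may choose $F_1, F_2$ to be $F$-rational function fields and the required $\ell$-unramified prolongations are then produced directly by Gauss extensions combined with Lemma \ref{lemma:mod-ell-ramification}. In the backward direction the identity must instead be extracted from the abstract configuration, and it is precisely here that conditions (b)--(d) earn their keep by furnishing enough independent transcendence for the analogous construction to go through. Once the identity is in hand, the remainder of each direction follows from a short manipulation using the lattice isomorphism, the maximality of $A$ in $\Mf^1(K)$, and the dimension bound $\trdeg(F|k) \leq 1$.
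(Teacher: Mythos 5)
Your structural outline matches the paper's: one reduces to the intersection formula $\Omega_K(F_1)\cap\Omega_K(F_2)=\Omega_K(F)$, builds it from $F\in\G^1(K|k)$ in the forward direction, and lifts the configuration via Proposition~\ref{prop:alg_closure_mod_ell_lattice} in the reverse direction. The place where your plan diverges and runs into trouble is the proof of the intersection formula itself. You propose to show, for each divisorial valuation $v$ of the $F$-rational curve $F_2$, that $v(f_2)\equiv 0\pmod\ell$ by building an $\ell$-unramified prolongation $w$ of $v$ to $K$ trivial on $F_1$, ``via Gauss extension then Lemma~\ref{lemma:mod-ell-ramification}.'' But Lemma~\ref{lemma:mod-ell-ramification} only says that \emph{all but finitely many} prime divisors on a $k$-variety are $\ell$-unramified in a fixed finite extension; it does not assert that a \emph{fixed} Gauss-extended valuation of $F(s_1,\ldots,s_r)$ admits an $\ell$-unramified prolongation to $K$ — a specific prime divisor can be totally $\ell$-ramified, and you need the assertion for \emph{every} $v$ in the support of $\divv f_2$, not cofinitely many of them. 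The paper sidesteps this entirely: it works inside the purely transcendental plane $M=F(t_1,t_2)$, where the divisor exact sequence $0\to\k_1(F)\to\k_1(M)\to\bigoplus_P\Z/\ell\to 0$ immediately gives $\Omega_M(M_1)\cap\Omega_M(M_2)=\Omega_M(F)$, and then transports this to $K$ using that $M$ is algebraically closed in the maximal separable subextension $K'|F(\textbf{t})$ (so $\k_1(M)\hookrightarrow\k_1(K')$ is injective) plus Fact~\ref{fact:inseperable-milnor-isomorphism} for the inseparable part. If you want to keep a valuation-theoretic flavor, the crucial missing observation is that injectivity already gives $f_1/f_2\in M^{\times\ell}$, so all divisor computations can be done in $M$ with no prolongation needed.

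For $(2)\Rightarrow(1)$ the mismatch is more serious. You propose to re-establish the intersection formula ``in the abstract setting'' to conclude $A=\Omega_K(F)$, but this is harder than the forward direction (the $F_i$ are no longer given as $F$-rational in explicit coordinates) and it inherits the same $\ell$-unramification gap. The paper's reverse argument does not prove the intersection formula at all. It only uses the trivial inclusion $\Omega_K(F)\subset B_1\cap B_2=A$ from (e); the real content of conditions (a)--(d) is a transcendence-degree calculation showing $\trdeg(F|k)=1$: using (d) and the liftings of $E,B_i'$, one sees $M\subset\overline{F(t)}\cap K$, while $\trdeg(\overline{F(t)}\cap K|k)=\trdeg(F|k)+1\leq 2$, forcing $\trdeg(F|k)=1$ and $M=\overline{F(t)}\cap K$. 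Then $\Omega_K(F)\in\Gf^1(K|k)\subset\Mf^1(K)$, and since $A\in\Mf^1(K)$ is given and distinct elements of $\Mf^1(K)$ are incomparable, $\Omega_K(F)\subset A$ forces $A=\Omega_K(F)$. Your sketch never uses condition (d) or this maximality argument, and it is not clear how your plan would produce $\trdeg(F|k)=1$ or $A=\Omega_K(F)$ without them.
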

\begin{proof}
First we show that $\Gf^1(K|k) \subset \Mf^1(K)$.
Let $A \in \Gf^1(K|k)$ be given.
By Lemma \ref{lemma:milnor-dimension-of-function-fields} and the non-triviality of $A$, it follows that $\dimm_K(A) = 1$.
Suppose now that $z \in \k_1(K) \smallsetminus A$.
By Proposition \ref{prop:a1-ar-z-non-zero}, there exists an element $a \in A$ such that $\{a,z\} \neq 0$ in $\k_2(K)$.
In particular, $\dimm_K(\langle A,z\rangle) \geq 2$.
This shows that $A$ is indeed maximal among subgroups $B$ of $\k_1(K)$ such that $\dimm_K(B) = 1$, and thus $A \in \Mf^1(K)$.

\vskip 5pt
\noindent{$(1) \Rightarrow (2)$:}
The argument above shows that $A \in \Mf^1(K)$.
We must therefore prove the existence of $D,B_1,B_2,E,B_1',B_2',C$ which satisfy the other assertions of condition (2).
Let $F \in \G^1(K|k)$ be given such that $A = \Omega_K(F)$.
Let $\tbf = (t_1,\ldots,t_r)$ be a transcendence base for $K|F$, and note that $r \geq 3$ since $\trdeg(K|k) \geq 4$ by assumption.
Let $K'$ denote the maximal separable subextension of $K|F(\tbf)$.
By replacing $t_1$ resp. $t_2$ by elements of the form $t_1 + a t_3$ resp. $t_2 + b t_3$ for some $a,b \in k$, we may assume (by Fact \ref{fact:birational-bertini}) that $K'$ is a regular extension of $M := F(t_1,t_2)$.
Let $M_1 := F(t_1)$ and $M_2 := F(t_2)$ and note that both $M_1$ and $M_2$ are algebraically closed in $K'$.

\begin{claim*}
	One has $\Omega_{K'}(F) = \Omega_{K'}(M_1) \cap \Omega_{K'}(M_2)$.
\end{claim*}
\begin{proof}
	Since $M = F(t_1,t_2)$ is rational over $F$ and $M_i = F(t_i)$ for $i = 1,2$, it is easy to see that $\Omega_M(F) = \Omega_M(M_1) \cap \Omega_M(M_2)$, as follows.
	First, observe that we have a short exact sequence
	\[ 0 \rightarrow \k_1(F) \rightarrow \k_1(M) \xrightarrow{D} \bigoplus_{P} \Z/\ell \cdot [P] \rightarrow 0, \]
	where $P$ varies over the monic irreducible polynomials in $F[t_1,t_2]$, and the component map $\k_1(M) \rightarrow \Z/\ell \cdot [P]$ associated to $P$ is the reduction of the $P$-adic valuation modulo $\ell$-th powers.
	The images $D(\Omega_M(M_1))$ and $D(\Omega_M(M_2))$ clearly intersect trivially in the right-hand-side of this exact sequence.
	Since $\Omega_M(F)$ is contained in $\Omega_M(M_1) \cap \Omega_M(M_2)$, we deduce that $\Omega_M(F) = \Omega_M(M_1) \cap \Omega_M(M_2)$.
	To deduce the original claim, recall that the map $\k_1(M) \rightarrow \k_1(K')$ is injective (as $M$ is algebraically closed in $K'$).
	The claim follows.
\end{proof}

For $i = 1,2$, let $F_i$ denote the algebraic closure of $M_i$ in $K$ and let $L$ denote the algebraic closure of $M$ in $K$.
Since $K|K'$ is purely inseparable, we note that, for $i = 1,2$, the extension $F_i | M_i$ is also purely inseparable.

\begin{claim*}
	One has $\Omega_K(F_1) \cap \Omega_K(F_2) = \Omega_K(F)$.
\end{claim*}
\begin{proof}
Consider the following commutative diagram
\[
\xymatrix{
\k_1(F) \ar[r] \ar[d] & \k_1(M_1)  \ar[dr]^{\cong} \ar[d] \\
\k_1(M_2) \ar[dr]_\cong \ar[r] & \k_1(K') \ar[dr]^\cong	& \k_1(F_1) \ar[d] \\
{} & \k_1(F_2) \ar[r] & \k_1(K)
}
\]
and note that the diagonal arrows are isomorphisms by Fact \ref{fact:inseperable-milnor-isomorphism}.
By the previous claim, we see that the top-left square in this diagram is a Cartesian square.
The claim now follows easily since $\Omega_K(F)$ is the image of $\k_1(F) \rightarrow \k_1(K)$ and $\Omega_K(F_i)$ is the image of $\k_1(F_i) \rightarrow \k_1(K)$ for $i = 1,2$.
\end{proof}

We now show how to find the various subgroups of $\k_1(K)$ which satisfy properties (a)-(e).
We let $B_1 := \Omega_K(F_1)$, $B_2 := \Omega_K(F_2)$, and $C := \Omega_K(L)$.
Let $t$ be a \emph{general} element of $K$ which is transcendental over $L$; the existence of such an element is guaranteed by Fact \ref{fact:birational-bertini}.
Finally, let $B'_1 := \Omega_K(\overline{F_1(t)} \cap K)$, $B'_2 := \Omega_K(\overline{F_2(t)} \cap K)$, $E := \Omega_K(\overline{F(t)} \cap K)$ and $D := \Omega_K(\overline{k(t)} \cap K) = \Omega_K(k(t))$.
Property (a) follows from Proposition \ref{prop:alg_closure_mod_ell_lattice}, since $F_1 \neq F_2$ and since $t$ is transcendental over $L$.
Properties (b), (c) and (d) are trivial to check as they just follow from the corresponding inclusion of subfields of $K$.
Lastly, property (e) is the claim above.

\vskip 5pt
\noindent{$(2) \Rightarrow (1)$:}
Proposition \ref{prop:alg_closure_mod_ell_lattice} implies the following fact which we tacitly use for the rest of the proof: Given $W_1,W_2 \in \G^*(K|k)$, one has $W_1 \subset W_2$ if and only if $\Omega_K(W_1) \subset \Omega_K(W_2)$.

Choose $F_1,F_2 \in \G^2(K|k)$ such that $\Omega_K(F_i) = B_i$ and $L \in \G^3(K|k)$ such that $\Omega_K(L) = C$.
Property (b) implies that $F_1 F_2 \subset L$.
Choose $t$ a general element of $K$ such that $\Omega_K(k(t)) = \Omega_K(\overline{k(t)} \cap K) = D$.
Property (a) implies that $F_1 \neq F_2$ and that $t$ is transcendental over $L$.
Moreover, property (c) implies that $\Omega_K(\overline{F_1(t)} \cap K) = B'_1$ and $\Omega_K(\overline{F_2(t)} \cap K) = B'_2$.
Choose $M \in \G^2(K|k)$ such that $\Omega_K(M) = E$.
Then $M \subset \overline{F_1(t)} \cap \overline{F_2(t)} \cap K$ by property (d).

Let $F = F_1 \cap F_2$.
Since $F_1 \neq F_2$, we note that $\trdeg(F|k) \leq 1$.
Moreover, since $F_1 F_2 \subset L$ and $t$ is transcendental over $L$, we see that $\overline{F(t)} \cap K = \overline{F_1(t)} \cap \overline{F_2(t)} \cap K$.
But $t$ is transcendental over $F$, and thus 
\[ 1 \leq \trdeg(\overline{F(t)} \cap K|k) = \trdeg(F|k) + 1 \leq 2. \]
On the other hand, we have $M \subset \overline{F_1(t)} \cap \overline{F_2(t)} \cap K = \overline{F(t)} \cap K$.
Since $\trdeg(M|k) = 2$ and $M$ is algebraically closed in $K$, we see that $M = \overline{F(t)} \cap K$, and thus $\trdeg(F|k) = 1$.

To conclude the proof, first note that $\Omega_K(F) \subset A$ by property (e).
Also, the argument at the start of the proof (using Proposition \ref{prop:a1-ar-z-non-zero}) shows that $\Omega_K(F)$ is an element of $\Mf^1(K)$ since $F \in \G^1(K|k)$.
By the ``maximality'' in the definition of $\Mf^1(K)$, any two comparable elements of $\Mf^1(K)$ must be identical.
Since $A \in \Mf^1(K)$ by assumption and $\Omega_K(F) \subset A$, we deduce that $A = \Omega_K(F)$, hence $A$ is an element of $\Gf^1(K|k)$.
\end{proof}

\section{Combinatorial Geometries}
\label{section:combinatorial-geometries}

The last key step in the proof of the ``Milnor variant'' of the main theorem is to show how to construct the combinatorial geometry of (relative) algebraic closure, which was considered in \cite{Evans1991}, \cite{Evans1995} and \cite{Gismatullin2008}, using our K-theoretic data.
This will be accomplished via the standard construction which produces a set with a closure operation from a graded lattice.
We call this construction the {\bf $\Cf$-construction}; that is, if $\Lf^*$ is a graded lattice, we denote by $\Cf(\Lf^*)$ the associated set with closure operation.
When applied to $\G^*(K|k)$, this construction produces the combinatorial geometry considered in loc.cit.

\subsection{Closure operations}
\label{subsection:closure-operations}

Let $S$ be a set and let $\Pc(S)$ denote the power set of $S$.
A {\bf closure operation} on $S$ is a function $\cl : \Pc(S) \rightarrow \Pc(S)$ such that for all subsets $A \subset B \subset S$, one has $A \subset \cl(A) = \cl(\cl(A))$ and $\cl(A) \subset \cl(B)$.
If we write $(S,\cl)$, we implicitly mean that $S$ is a set which is equipped with a closure operation $\cl$.
Also, for a finite subset $\{a_1,\ldots,a_n\} \subset S$, we will usually write $\cl(a_1,\ldots,a_n)$ instead of $\cl(\{a_1,\ldots,a_n\})$.
Finally, we define an isomorphism of sets with closure operations $f : (S_1,\cl_1) \rightarrow (S_2,\cl_2)$ to be a bijection of the underlying sets $f : S_1 \rightarrow S_2$ which is compatible with closures in the sense that $f(\cl_1(A)) = \cl_2(f(A))$ for all $A \subset S_1$.
The set of isomorphisms $(S_1,\cl_1) \rightarrow (S_2,\cl_2)$ will be denoted by $\Isom^{\cl}(S_1,S_2)$.

Suppose that $\Lf^*$ is a graded lattice.
We can associate to $\Lf^*$ a closure operation $\cl$ on $\Lf^1$, as follows.
For a subset $A \subset \Lf^1$, define 
\[ \cl(A) = \{ a \in \Lf^1 \ : \ a \leq \vee A \}. \]
Since $\vee A$ is the least upper bound of $A$, we see that $A \subset \cl(A)$.
This implies that $\vee \cl(A) = \vee A$ and thus $\vee \cl(\cl(A)) = \vee(\cl(A))$.
Therefore, we deduce that $\cl(A) = \cl(\cl(A))$.
Furthermore, if $A \subset B$ then $\cl(A) \subset \cl(B)$ since $\vee A \leq \vee B$.
Thus $\cl$ is indeed a closure operation on $\Lf^1$.
We will denote $(\Lf^1,\cl)$ by $\Cf(\Lf^*)$.

Finally, note that the construction above is compatible with isomorphisms.
In particular, any isomorphism of graded lattices $f^* : \Lf_1^* \rightarrow \Lf_2^*$ induces an isomorphism of sets with closure operations $\Cf(f^*) : \Cf(\Lf_1^*) \rightarrow \Cf(\Lf_2^*)$ where $f^1 : \Lf_1^1 \rightarrow \Lf_2^1$ is the corresponding bijection on the underlying sets.
Namely, we get a canonical map of isomorphism sets:
\[ \Cf : \Isom^*(\Lf_1^*,\Lf_2^*) \rightarrow \Isom^{\cl}(\Cf(\Lf_1^*),\Cf(\Lf_2^*)). \]
Moreover, $\Cf$ is compatible with compositions of isomorphisms in the sense that, if $f_{12}^* : \Lf_1^* \rightarrow \Lf_2^*$ and $f_{23}^* : \Lf_2^* \rightarrow \Lf_3^*$ are isomorphisms of graded lattices, then $\Cf(f_{23}^* \circ f_{12}^*) = \Cf(f_{23}^*) \circ \Cf(f_{12}^*)$.

Of course, one can define \emph{morphisms} of graded lattices resp. sets with closure in natural ways so that $\Cf$ is actually a functor.
We will not develop these details here because this will not play any role in proving the results of this paper.

\subsection{Combinatorial geometries}
\label{subsection:combinatorial-geometries}

Let $(S,\cl)$ be a set equipped with a closure operation.
We say that $(S,\cl)$ is a {\bf combinatorial geometry} if it satisfies the following additional axioms for all $A \subset S$ and $a,b \in S$:
\begin{enumerate}
	\item {\bf Exchange:} If $a \in \cl(A \cup \{b\}) \smallsetminus \cl(A)$, then $b \in \cl(A \cup \{a\})$.
	\item {\bf Finite Character:} If $a \in \cl(A)$, then $a \in \cl(B)$ for some finite subset $B$ of $A$.
    \item {\bf Geometry:} $\cl(\varnothing) = \varnothing$ and $\cl(\{a\}) = \{a\}$.
\end{enumerate}

We will need to speak about definable sets in the first-order language of a combinatorial-geometry (or, more generally, a set with closure operation).
The language which we use is the standard one, $\Lc = (\cl_n)_{n \geq 0}$, which consists entirely of relation symbols, where $\cl_n$ is an $(n+1)$-ary relation.
For a set with closure operation $S = (S,\cl)$, we interpret $S$ as an $\Lc$-structure as follows.
The universe of a given structure is precisely the underlying set $S$.
For each $n$, the $(n+1)$-ary relation $\cl_n(x_0;x_1,\ldots,x_n)$ defines the closure $\cl(a_1,\ldots,a_n)$ by saying
\[ \cl_n(a_0;a_1,\ldots,a_n) \Longleftrightarrow a_0 \in \cl(a_1,\ldots,a_n). \]

To simplify the notation, we will use underlined boldface characters to denote (possibly empty) tuples of variables and/or constants of a given length; the length of a tuple $\tplx$ will be denoted by $l(\tplx)$.
For example, we abbreviate $(x_1,\ldots,x_r)$ as $\tplx$ and set $l(\tplx) = r$.
For $\psi(\tplx)$, a first-order formula in $\Lc$ with $l(\tplx)$ free variables, one defines:
\[ 
	\psi(S) = \{\tplc \in S^{l(\tplx)} \ : \ \psi(\tplc) \text{ holds true in } S \}. 
\]
Recall that a set $A$ is called a {\bf $\varnothing$-definable} subset of $S^r$ if there exists some first-order $\Lc$-formula, say $\psi(\tplx)$, such that $r = l(\tplx)$ and $A = \psi(S)$.
Similarly, for $\psi(\tplx;\tply)$, a first-order formula in $\Lc$ with $l(\tplx)+l(\tply)$ free variables, and $\tplb \in S^{l(\tply)}$, one defines:
\[
	\psi(S;\tplb) = \{\tplc \in S^{l(\tplx)} \ : \ \psi(\tplc;\tplb) \text{ holds true in } S \}. 
\]
Given a non-empty subset $P \subset S$, the set $A$ is called a {\bf $P$-definable} subset of $S^r$ if there exists a some first-order $\Lc$-formula, say $\psi(\tplx;\tply)$, and some $\tplb \in P^{l(\tply)}$, such that $r = l(\tplx)$ and $A = \psi(S;\tplb)$.
We say that $A$ is {\bf definable} if it is a $P$-definable subset of $S^r$ for some $r$ and for some (possibly empty) subset $P$ of $S$.
For example, if $a_1,\ldots,a_s$ are elements of $S$, then the closure $\cl(a_1,\ldots,a_s)$ is a definable subset of $S$ since:
\[ \cl(a_1,\ldots,a_s) = \cl_n(S;a_1,\ldots,a_s). \]
Note that the \emph{finite character} axiom for a combinatorial geometry ensures that the full structure of a combinatorial geometry $S$, as a set with closure operation, can be recovered from the $\Lc$-structure of $S$.

\subsection{Combinatorial geometry of algebraic closure}
\label{subsec:comb-geom-of-alg-closure}

Let $K|k$ be an extension of fields of finite transcendence degree and assume that $k$ is algebraically closed in $K$.
We denote $\Cf(\G^*(K|k))$ by $\G(K|k)$.
In other words, $\G(K|k)$ is a set with a closure operation whose underlying set is precisely $\G^1(K|k)$.

The closure operation on $\G(K|k)$ can be described explicitly as follows.
For $t \in K \smallsetminus k$, define $\kappa_t := \overline{k(t)} \cap K$.
Thus, the underlying set of $\G(K|k)$ is precisely 
\[\G^1(K|k) = \{\kappa_t \ : \ t \in K \smallsetminus k\}\]
which is the collection of relatively algebraically closed subextensions of $K|k$ of transcendence degree $1$ over $k$.
For a subset $S \subset \G^1(K|k)$, denote by $k(S)$ the compositum of the terms in $S$ (as subfields of $K$) and denote by $\kappa_S$ the field $\overline{k(S)} \cap K$, as in \S\ref{sub:algebraic_closure}.
The closure of $S$ is defined as follows:
\[ \cl(S) = \{ \kappa_t \in \G^1(K|k) \ : \ \kappa_t \subset \kappa_S \}.\]
It is fairly straightforward to check that $\G(K|k)$ is actually a \emph{combinatorial geometry}, and that $\G(K|k)$ is precisely $\Cf(\G^*(K|k))$, the set with closure operation associated to the graded lattice $\G^*(K|k)$ via the $\Cf$-construction of \S\ref{subsection:closure-operations}.

Suppose now that $K|k$ and $L|l$ are function fields over algebraically closed fields $k$ resp. $l$.
Then any isomorphism $\sigma : K^i \rightarrow L^i$ such that $\sigma k = l$ induces an isomorphism $\G(K^i|k) \rightarrow \G(L^i|l)$.
Thus, we have a canonical map $\Isom^i(K,L) \rightarrow \Isom^{\cl}(\G(K^i|k),\G(L^i|l))$.
It is clear that this map is compatible with compositions of isomorphisms (i.e. $\G$ is functorial with respect to isomorphisms).

If $\Char k = p > 0$, then the Frobenius isomorphism $\operatorname{Frob}_p : K^i \rightarrow K^i$ induces the \emph{identity} isomorphism on $\G(K^i|k)$.
Thus, we obtain an induced map 
\[ \Isom^i_F(K,L) \rightarrow \Isom^{\cl}(\G(K^i|k),\G(L^i|l)).\]
Finally, we have a canonical isomorphism of combinatorial geometries $\G(K|k) \cong \G(K^i|k)$ defined on underlying sets by sending $F \in \G^1(K|k)$ to $\overline F \cap K^i = F^i \in \G^1(K^i|k)$.
To summarize, we have a canonical map of isomorphism sets which is compatible with composition of isomorphisms:
\[ \Isom^i_F(K,L) \rightarrow \Isom^{\cl}(\G(K|k),\G(L|l)). \]

The main results concerning the classification of combinatorial geometries of algebraic closure were first developed for extensions of algebraically closed fields by {\sc Evans-Hrushovski} \cite{Evans1991}, \cite{Evans1995}.
These results were extended to extensions of arbitrary fields by {\sc Gismatullin} \cite{Gismatullin2008} using similar arguments.
These main results essentially assert that $K^i$ is encoded in $\G(K|k)$ in a first-order way, independent from $K|k$, using the language $\Lc$ from \S\ref{subsection:combinatorial-geometries}.
We summarize this theorem from loc.cit. in a special case which we will need in order to prove Theorem \ref{thm:Main-Theorem-A-milnor}.
\begin{theorem}[\cite{Gismatullin2008} Theorem 4.2]
\label{thm:fundamental-theorem-of-field-theoretic-cl-geometries}
Let $k,l$ be algebraically closed fields and let $K|k$ and $L|l$ be function fields such that $\trdeg(K|k) \geq 5$. 
Then the following hold:
\begin{enumerate}
	\item The field $K^i$ is uniformly interpretable (with parameters) in $\G(K|k)$.
	\item Any isomorphism $\sigma : \G(K|k) \rightarrow \G(L|l)$ of combinatorial geometries is induced by an isomorphism of fields $\tilde \sigma : K^i \rightarrow L^i$, which is unique up-to Frobenius twists, such that $\tilde\sigma k = l$.
	Namely, the following canonical map is a bijection:
	\[ \Isom^i_F(K,L) \rightarrow \Isom^{\cl}(\G(K|k),\G(L|l)). \] 
\end{enumerate}
\end{theorem}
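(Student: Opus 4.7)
The plan is to follow the strategy of Evans-Hrushovski \cite{Evans1995}, extended by Gismatullin \cite{Gismatullin2008} to non-algebraically-closed base fields, which applies the group-configuration theorem from geometric stability theory to the pregeometry of relative algebraic closure. First, I would note that the $\Lc$-structure $\G(K|k)$ encodes the entire algebraic-closure pregeometry on $K^i$ modulo $k$: elements of $\G^1(K|k)$ parametrize the transcendence-degree-$1$ algebraically-closed intermediate fields, and finite joins in $\G^*(K|k)$ are $\Lc$-interpretable via $\cl$, which recovers relative algebraic closure of all finitely-generated subextensions. In particular, algebraic dependence and transcendence rank become $\Lc$-definable, and one can formulate in $\Lc$ the notion of a generic ``plane curve,'' i.e.\ a rank-$2$ locus whose generic triple satisfies a non-trivial algebraic relation.

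Second, to interpret $K^i$ itself, I would hunt for \emph{group configurations} inside $\G(K|k)$---finite configurations of points satisfying the standard six-point incidence pattern of Hrushovski's group-configuration theorem. The theorem then produces from any such configuration a type-interpretable group acting generically on a curve, and a Weil-style group-chunk argument upgrades this to an actual interpretable group. Two families of configurations are present in our geometry: those coming from the additive group $(K^i,+)$ and those coming from the multiplicative group $((K^i)^\times,\cdot)$. Identifying both, and showing that they fit together in a canonical way to recover the field $K^i$ up to isomorphism, is the central obstacle. The assumption $\trdeg(K|k) \geq 5$ enters precisely here, because the classification of the rank-$1$ groups arising and the disentangling of additive from multiplicative configurations in \cite{Evans1995} require sufficient geometric room to exhibit enough independent generic tuples; this is the main difficulty and the source of the stated transcendence bound.

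Third, for part~(2), any isomorphism $\sigma : \G(K|k) \to \G(L|l)$ preserves all $\Lc$-definable sets and relations, so it transports the parameter-definable interpretation of $K^i$ from (1) to the analogous interpretation of $L^i$, yielding a field isomorphism $\tilde\sigma : K^i \to L^i$ which must send $k$ to $l$ (since $k$ is characterized inside the interpreted field as the set of multiplicatively divisible elements). The only ambiguity in lifting $\sigma$ to $\tilde\sigma$ arises from the fact that, when $\Char k = p > 0$, the Frobenius automorphism of $K^i$ fixes every element of $\G^1(K^i|k)$ setwise and hence induces the identity on $\G(K|k)$; this accounts for precisely the Frobenius-twist fibers in the canonical map $\Isom^i(K,L) \to \Isom^{\cl}(\G(K|k),\G(L|l))$, and consequently the induced map from $\Isom^i_F(K,L)$ is a bijection. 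Surjectivity amounts to the construction of $\tilde\sigma$ from $\sigma$ via the interpretation; injectivity amounts to showing that two field isomorphisms inducing the same map on $\G$ differ by a Frobenius power, which follows because the interpretation in (1) is uniform and canonical modulo Frobenius.
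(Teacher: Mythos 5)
This theorem is not proved in the paper: it is imported verbatim as a citation to \cite{Gismatullin2008}, Theorem 4.2 (which in turn builds on \cite{Evans1991}, \cite{Evans1995}), and the paper offers no proof of its own. There is therefore nothing in the manuscript for your sketch to be compared against; the only relevant comparison is with the external sources, which the paper treats as black boxes. With that understood, your outline is a fair high-level description of the Evans--Hrushovski/Gismatullin strategy: identify group configurations in the algebraic-closure pregeometry, invoke the group-configuration theorem and a group-chunk argument to interpret rank-one groups, and then sort out which configurations come from $(K^i,+)$ and which from $((K^i)^\times,\cdot)$ to reconstruct the field. Your account of where $\trdeg \geq 5$ enters also matches what the paper itself says in the introduction (it is needed in \cite{Evans1995} ``for a technical step which determines which group-configurations arise from the multiplicative group resp.\ the additive group of $K$''), and your remark that Frobenius fixes every $\kappa_t \in \G^1(K^i|k)$ setwise and hence induces the identity on $\G(K|k)$ correctly explains the source of the Frobenius-twist ambiguity.

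One point you should be more careful about: your argument for injectivity in part (2), namely that ``two field isomorphisms inducing the same map on $\G$ differ by a Frobenius power, which follows because the interpretation in (1) is uniform and canonical modulo Frobenius,'' is essentially a restatement of the conclusion rather than an argument for it. The actual content here is Evans--Hrushovski's computation of $\Aut(\G(K|k))$: one must show, independently of any chosen interpretation, that the kernel of $\Aut(K^i) \to \Aut(\G(K|k))$ consists precisely of the Frobenius powers, which is a nontrivial theorem in its own right (\cite{Evans1995}, and the generalization in \cite{Gismatullin2008}). For a sketch of a cited result this is forgivable, but it should be flagged as a theorem of loc.cit.\ rather than something that ``follows'' from uniformity of the interpretation.
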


We mention Theorem \ref{thm:fundamental-theorem-of-field-theoretic-cl-geometries}(1) in particular to stress that the process to reconstruct $K^i|k$ from $\Kcalm(K|k)$ in Theorem \ref{thm:Main-Theorem-A-milnor}(1) can be made very explicit (although it is not effective due to the use of Propositions \ref{prop:recovering-higher-dimensions} and \ref{prop:recovering-points}).
In simpler terms, Theorem \ref{thm:fundamental-theorem-of-field-theoretic-cl-geometries}(1) says that there are first-order formulas $P(\tply)$, $U(\tplx_1;\tply)$, $E(\tplx_2;\tply)$, $A(\tplx_3;\tply)$ and $M(\tplx_4;\tply)$ in $\Lc$, which are \emph{independent} of $K|k$, such that for all $K|k$ as in the theorem, and any \emph{arbitrary choice} of $\tplb \in P(\G(K|k))$, the following hold for $\G := \G(K|k)$:
\begin{enumerate}
	\item One has $E(\G;\tplb) \subset U(\G;\tplb)^2$, and this subset is an equivalence relation on $U(\G;\tplb)$.
	Let $\mathbf{U}$ denote the set of equivalence classes of this equivalence relation.
	\item There are binary operations $+^\G$ and $\times^\G$ on $\mathbf{U}$ such that $A(\G;\tplb)$ resp. $M(\G;\tplb)$ is the preimage of the graph of $+^\G$ resp. $\times^\G$ under the projection $U(\G,\tplb)^3 \twoheadrightarrow \mathbf{U}^3$.
	\item There is a bijection $K^i \xrightarrow{\cong} \mathbf{U}$ of sets which induces an isomorphism of structures $(K^i,+,\times) \cong (\mathbf{U},+^\G,\times^\G)$.
\end{enumerate}
In particular, $(\mathbf{U},+^\G,\times^\G)$ is a field where the addition $+^\G$ defined by $A(\G;\tplb)$, the multiplication $\times^\G$ is defined by $M(\G;\tplb)$, and this field is isomorphic to $K^i$.
Thus the field $K^i$ can be reconstructed from $\G(K|k)$ using the formulas $P,U,E,A$ and $M$ which are independent of $K|k$.
The subfield $k$ can then be recovered from $K^i$ as the collection of (multiplicatively) divisible elements of $K^i$, since $K$ is a function field over $k = \bar k$.

\section{Proof of Theorem \ref{thm:Main-Theorem-A-milnor}}
\label{section:proof-of-theorem-milnor}

We will use the notation of Theorem \ref{thm:Main-Theorem-A-milnor}.
Namely, $K|k$ and $L|l$ are function fields over algebraically closed fields such that $\Char k, \Char l \neq \ell$ and $\trdeg(K|k) \geq 5$.

\vskip 10pt
\noindent\emph{Proof of (1):}
We start with our given data $\Kcalm(K|k)$:
\begin{itemize}
	\item The groups $\k_1(K)$ and $\k_2(K)$.
	\item The multiplication map $\k_1(K) \otimes \k_1(K) \rightarrow \k_2(K)$.
	\item The collection $\Gfor(K|k)$ of rational subgroups of $\k_1(K)$.
\end{itemize}

The work required to reconstruct the field $K^i|k$ from $\Kcalm(K|k)$ has already been done, and now it's just a matter of putting everything together.
The following are the steps which reconstruct $K^i|k$ from $\Kcalm(K|k)$.
\begin{enumerate}[leftmargin=*]
	\item First, we can reconstruct the whole mod-$\ell$ Milnor K-ring $\k_*(K)$ using $\Kcalm(K|k)$, since $\k_*(K)$ is a quadratic algebra.
	More precisely, we have:
	\[ \k_*(K) = \frac{\operatorname{T}_*(\k_1(K))}{\langle \ker(\k_1(K) \otimes \k_1(K) \rightarrow \k_2(K)) \rangle} \]
	where $\operatorname{T}_*(\k_1(K))$ is the (graded) tensor-algebra generated by $\k_1(K)$ in degree $1$, and $\langle \ker(\k_1(K) \otimes \k_1(K) \rightarrow \k_2(K)) \rangle$ is the ideal of $\operatorname{T}_*(\k_1(K))$ generated by the kernel of the multiplication map $\k_1(K) \otimes \k_1(K) \rightarrow \k_2(K)$.
	In step (2) below, we will tacitly use the fact that we have constructed the full mod-$\ell$ Milnor K-ring $\k_*(K)$ of $K$.
	In particular, we are now able to compute the Milnor-dimensions of subsets of $\k_1(K)$.
	\item Recall that $\Gf^*(K|k)$ is a graded lattice by Proposition \ref{prop:alg_closure_mod_ell_lattice}.
	Using $\k_*(K)$ along with the collection $\Gfor(K|k)$, we next reconstruct this graded lattice $\Gf^*(K|k)$ using the following recipe:
	\begin{enumerate}
		\item Proposition \ref{prop:recovering-higher-dimensions} shows how to reconstruct $\Gf^r(K|k)$ for $r \geq 2$ using $\k_*(K)$ and $\Gfor(K|k)$.
		\item Having reconstructed $\Gf^r(K|k)$ for $r \geq 2$, Proposition \ref{prop:recovering-points} now shows how to reconstruct $\Gf^1(K|k)$ using $\k_*(K)$ and $\Gfor(K|k)$.
		\item The partial ordering on $\Gf^*(K|k)$ arises from inclusion of subgroups in $\k_1(K)$.
		\item The grading of $\Gf^*(K|k)$ is recovered as part of Propositions \ref{prop:recovering-higher-dimensions} and \ref{prop:recovering-points}.
		Alternatively, Lemmas \ref{lemma:milnor-dimension-of-function-fields} and \ref{lemma:independent-coords} provide the following recipe to recover the grading: For an element $A \in \Gf^*(K|k)$ one has $\dimm_K(A) = r$ if and only if $A \in \Gf^r(K|k)$. 
	\end{enumerate}
	\item Since $\Gf^*(K|k)$ is a graded lattice (Proposition \ref{prop:alg_closure_mod_ell_lattice}), we may apply the $\Cf$-construction from \S\ref{subsection:closure-operations} and obtain $\Gf(K|k) := \Cf(\Gf^*(K|k))$, the associated set with closure operation.
	On the other hand, $\Cf(\G^*(K|k)) = \G(K|k)$ is a combinatorial geometry, as described in \S\ref{subsec:comb-geom-of-alg-closure}.
	Since $\Omega_K : \G^*(K|k) \rightarrow \Gf^*(K|k)$ is an isomorphism by Proposition \ref{prop:alg_closure_mod_ell_lattice}, we see that $\Gf(K|k)$ is also a combinatorial geometry, and that $\Omega_K$ induces a canonical isomorphism $\G(K|k) \rightarrow \Gf(K|k)$ of combinatorial geometries.
	\item By Theorem \ref{thm:fundamental-theorem-of-field-theoretic-cl-geometries}(1), the field $K^i$ is uniformly interpretable in $\G(K|k)$.
	Since $\Omega_K$ induces an isomorphism $\G(K|k) \rightarrow \Gf(K|k)$ of combinatorial geometries, we see that $K^i$ is (uniformly) interpretable also in $\Gf(K|k)$ using the same interpretation/formulas used to construct $K^i$ from $\G(K|k)$.
	Finally, we can recover $k$ as the subset of multiplicatively divisible elements of $K^i$.
\end{enumerate}
This completes the proof of Theorem \ref{thm:Main-Theorem-A-milnor}(1).

\vskip 10pt
\noindent\emph{Proof of (2):}
We split up the proof of (2) into two steps.
In the first step, we trace through the steps from the proof of (1) above to construct a map
\[ \UIsomm_{\rm rat}(\k_1(K),\k_1(L)) \rightarrow \Isom^i_F(K,L) \]
which will be our candidate for the inverse of $\Isom^i_F(K,L) \rightarrow  \UIsomm_{\rm rat}(\k_1(K),\k_1(L))$.
It will be clear from the construction that this map is a \emph{left inverse}, meaning that the composition
\[ \Isom^i_F(K,L) \rightarrow \UIsomm_{\rm rat}(\k_1(K),\k_1(L)) \rightarrow \Isom^i_F(K,L) \]
is the identity.
It will also be clear from the construction that this map is compatible with compositions of isomorphisms on either side; this will allow us to reduce to the case where $K|k = L|l$.
In the second step, we complete the proof that this candidate is indeed the inverse of the canonical map $\Isom^i_F(K,L) \rightarrow  \UIsomm_{\rm rat}(\k_1(K),\k_1(L))$.

\vskip 10pt
\noindent\underline{{\bf Step 1:} Constructing the inverse.}
\vskip 10pt

First, note that if $\UIsomm_{\rm rat}(\k_1(K),\k_1(L))$ is non-empty, then the full mod-$\ell$ Milnor K-rings $\k_*(K)$ and $\k_*(L)$ are isomorphic, since $\k_*(K)$ and $\k_*(L)$ are quadratic algebras (see Step (1) in the proof of assertion (1) above).
Therefore, one has $\trdeg(K|k) = \trdeg(L|l)$, since $\trdeg(K|k) = \dimm_K(\k_1(K))$ and $\trdeg(L|l) = \dimm_L(\k_1(L))$ by Lemmas \ref{lemma:milnor-dimension-of-function-fields} and \ref{lemma:independent-coords}.
Thus, we can also apply the process from assertion (1) to $\Kcalm(L|l)$ to reconstruct $L^i|l$.
The construction of the map $\UIsomm_{\rm rat}(\k_1(K),\k_1(L)) \rightarrow \Isom^i_F(K,L)$ follows from the observation that this reconstruction process from assertion (1) is compatible with isomorphisms and invariant under multiplying isomorphisms by $\epsilon \in (\Z/\ell)^\times$.
The precise construction is outlined in the steps below.
\begin{enumerate}[leftmargin=*]
	\item Propositions \ref{prop:recovering-higher-dimensions} and \ref{prop:recovering-points} yield a canonical map:
	\[ \Isomm_{\rm rat}(\k_1(K),\k_1(L)) \rightarrow \Isom^*(\Gf^*(K|k),\Gf^*(L|l)). \]
	It is clear from the construction that this map is compatible with compositions and that it factors through $\UIsomm_{\rm rat}(\k_1(K),\k_1(L))$.
	Thus, we obtain the first component of our map:
	\[ \UIsomm_{\rm rat}(\k_1(K),\k_1(L)) \rightarrow \Isom^*(\Gf^*(K|k),\Gf^*(L|l)). \]
	\item Proposition \ref{prop:alg_closure_mod_ell_lattice} shows that $\Gf^*(K|k)$ and $\Gf^*(L|l)$ are graded lattices.
	Denoting $\Gf(K|k) := \Cf(\Gf^*(K|k))$ and $\Gf(L|l) := \Cf(\Gf^*(L|l))$ as in step (3) from the proof of (1) above, the $\Cf$-construction from \S\ref{subsection:closure-operations} yields a canonical map which is compatible with compositions:
	\[ \Isom^*(\Gf^*(K|k),\Gf^*(L|l)) \rightarrow \Isom^{\cl}(\Gf(K|k),\Gf(L|l)). \]
	\item Proposition \ref{prop:alg_closure_mod_ell_lattice} furthermore shows that $\Omega_K$ resp. $\Omega_L$ induce a canonical \emph{bijection} which is compatible with compositions:
	\[ \Isom^{\cl}(\G(K|k),\G(L|l)) \xrightarrow{\cong} \Isom^{\cl}(\Gf(K|k),\Gf(L|l)). \]
	The next component is the inverse of the map above, which is also a bijection that is compatible with compositions:
	\[ \Isom^{\cl}(\Gf(K|k),\Gf(L|l)) \xrightarrow{\cong} \Isom^{\cl}(\G(K|k),\G(L|l)). \]
	\item Theorem \ref{thm:fundamental-theorem-of-field-theoretic-cl-geometries}(2) says that the canonical map 
	\[ \Isom^i_F(K,L) \rightarrow \Isom^{\cl}(\G(K|k),\G(L|l))\]
	is a \emph{bijection}.
	Since this map is compatible with compositions, the same holds for its inverse
	\[ \Isom^{\cl}(\G(K|k),\G(L|l)) \rightarrow \Isom^i_F(K,L)\]
	which forms the last component.
\end{enumerate}
Taking the composition of the maps described above, we obtain a map 
\[ \underline{\eta_{K,L}} : \UIsomm_{\rm rat}(\k_1(K),\k_1(L)) \rightarrow \Isom^i_F(K,L) \]
which is the candidate for the inverse of $\Isom^i_F(K,L) \rightarrow \UIsomm_{\rm rat}(\k_1(K),\k_1(L))$.
Composing with the canonical map $\Isomm_{\rm rat}(\k_1(K),\k_1(L)) \rightarrow \UIsomm_{\rm rat}(\k_1(K),\k_1(L))$, we obtain another map which will be used in the second step of the proof:
\[ \eta_{K,L} : \Isomm_{\rm rat}(\k_1(K),\k_1(L)) \rightarrow \Isom^i_F(K,L). \]

\vskip 10pt
\noindent\underline{{\bf Step 2:} Completing the proof.}
\vskip 10pt

By tracing through the construction above, it is easy to see that $\underline{\eta_{K,L}}$ is a \emph{left inverse} for the canonical map $\Isom^i_F(K,L) \rightarrow \UIsomm_{\rm rat}(\k_1(K),\k_1(L))$ which was described in \S\ref{subsec:milnor-variant}. 
As noted above, we also see that $\eta_{K,L}$ is compatible with compositions of isomorphisms.
Namely, if $K|k$, $L|l$ and $M|m$ are function fields over algebraically closed fields such that $\Char k, \Char l, \Char m \neq \ell$ and $\trdeg(K|k) \geq 5$, and if $f \in \Isomm_{\rm rat}(\k_1(K),\k_1(L))$ and $g \in \Isomm_{\rm rat}(\k_1(L),\k_1(M))$ are given, then 
\[ \eta_{K,M}(g \circ f) = \eta_{L,M}(g) \circ \eta_{K,L}(f). \]

Define $\Aut_F^i(K) := \Isom^i_F(K,K)$ and $\Aut^{\rm M}_{\rm rat}(\k_1(K)) := \Isomm_{\rm rat}(\k_1(K),\k_1(K))$, both considered as groups under composition.
Thus, the construction above yields a \emph{group homomorphism}
\[ \eta_K := \eta_{K,K} : \Aut^{\rm M}_{\rm rat}(\k_1(K)) \rightarrow \Aut_F^i(K). \]
To complete the proof of Theorem \ref{thm:Main-Theorem-A-milnor}(2), it suffices to prove that the kernel of $\eta_K$ is precisely $(\Z/\ell)^\times \cdot \mathbf{1}_{\k_1(K)}$.
It is clear from the construction that $(\Z/\ell)^\times \cdot \mathbf{1}_{\k_1(K)}$ is contained in $\ker\eta_K$.
Thus, the rest of the proof will be devoted to proving the opposite inclusion.

Let $\Phi$ be an element of $\ker\eta_K$ which is going to be fixed for the rest of the proof; our goal is to show that there exists some $\epsilon \in (\Z/\ell)^\times$ such that $\Phi = \epsilon \cdot \mathbf{1}_{\k_1(K)}$.
Since $\eta_K$ factors through the \emph{group isomorphisms} furnished by Proposition \ref{prop:alg_closure_mod_ell_lattice} and Theorem \ref{thm:fundamental-theorem-of-field-theoretic-cl-geometries},
\[ \Isom^{\cl}(\Gf(K|k),\Gf(K|k)) \cong \Isom^{\cl}(\G(K|k),\G(K|k)) \cong \Aut^i_F(K), \]
we see that for all $A \in \Gf^1(K|k)$, one has $\Phi A = A$.

For a valuation $v$ of $K$, we define $\Uf_v := \Omega_K(U_v)$, the image of the $v$-units in $\k_1(K)$.
Note that the map $v : K^\times \rightarrow vK$ induces a canonical isomorphism $\k_1(K)/\Uf_v \xrightarrow{\cong} vK/\ell$.
We begin by showing that the $\Uf_v$ are $\Phi$-invariant for \emph{divisorial} valuations $v$ of $K$.

\begin{lemma}
\label{lemma:units-generated-by-subfields}
	In the notation above, one has $\Phi \Uf_v = \Uf_v$ for all divisorial valuations $v$ of $K$.
\end{lemma}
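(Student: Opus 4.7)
The plan is to realize $\Uf_v$ as the subgroup of $\k_1(K)$ generated by subgroups of the form $\Omega_K(F)$, where $F$ ranges over those $F \in \G^1(K|k)$ on which $v$ restricts to the trivial valuation. Each such $\Omega_K(F)$ is an element of $\Gf^1(K|k)$ and so is $\Phi$-invariant by the observation preceding the lemma, and is visibly contained in $\Uf_v$. Once this generation statement is established, we immediately get $\Phi \Uf_v \subseteq \Uf_v$, and applying the same argument to $\Phi^{-1}$ (which also lies in $\ker\eta_K$) yields the reverse inclusion. The main work is therefore to show that every $u \in U_v$ lies, as an element of $\k_1(K)$, in a product of such $\Omega_K(F)$.

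Since $k^\times \subseteq K^{\times \ell}$ (as $k$ is algebraically closed, hence $k^\times$ is divisible), we may assume $u \notin k^\times$. Because $v$ is divisorial, $Kv$ is a function field over $k$ of transcendence degree $\trdeg(K|k) - 1 \geq 4$. Pick $\bar t \in Kv$ transcendental over $k(\bar u)$, and lift it to some $t \in U_v$ via the surjection $U_v \twoheadrightarrow (Kv)^\times$. A nontrivial polynomial relation $P(u,t)=0$ with $P \in k[X,Y]$ would reduce modulo $\mf_v$ to a nontrivial relation $P(\bar t,\bar u)=0$ in $Kv$, since the nonzero coefficients of $P$ lie in $k^\times$ and reduce to nonzero elements of $Kv$; this contradicts the choice of $\bar t$, so $u$ and $t$ are algebraically independent over $k$. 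Set $L := \overline{k(u,t)} \cap K \in \G^2(K|k)$. By Abhyankar's inequality applied to $v|_{k(u,t)}$, the fact that the residue field already contains $k(\bar u, \bar t)$ of transcendence degree $2 = \trdeg(k(u,t)|k)$ forces $v|_{k(u,t)}$ to be trivial, and its algebraic extension $v|_L$ is then also trivial, so $L^\times \subseteq U_v$.

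Now apply Lemma \ref{lemma:birational-bertini-reformulation} to $L$: one can write $u = x \cdot y$ with $x, y \in L$ each general in $K$. For each factor, say $x$, the field $F_x := \overline{k(x)} \cap K$ is contained in $L$ (because $x \in L$ and $L$ is algebraically closed in $K$), so $v|_{F_x}$ is trivial and $\Omega_K(F_x) \subseteq \Uf_v$; the same holds for $F_y$. Applying $\Phi$ to the identity $u = xy$ in $\k_1(K)$ and using the $\Phi$-invariance of $\Omega_K(F_x)$ and $\Omega_K(F_y)$ places $\Phi(u)$ in $\Uf_v$, completing the proof. The principal technical point is the valuation-theoretic input forcing $v|_L$ to be trivial; once that is in hand, everything else is a multiplicative application of the Birational Bertini theorem inside $L$.
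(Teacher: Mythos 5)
Your argument has a genuine gap in the case where $\bar u\in k^\times$, i.e.\ where $u\in U_v$ reduces to a nonzero constant (and since $k$ is algebraically closed, $\bar u\in\kappa(v)$ is either in $k^\times$ or transcendental over $k$, so this is one of exactly two cases, not an exotic one). In that situation $k(\bar u,\bar t)=k(\bar t)$ has transcendence degree only $1$ over $k$, so Abhyankar's inequality for $v|_{k(u,t)}$ does not force triviality --- and in fact the restriction is genuinely nontrivial: writing $\bar u=a\in k^\times$, the element $u-a$ lies in $k(u,t)\cap\mf_v$ and is nonzero (as $u\notin k$), so $v(u-a)>0$. Consequently $L=\overline{k(u,t)}\cap K$ is \emph{not} contained in $\Oc_v$, and there is no reason the two Bertini factors $x,y$ of $u$ inside $L$ should yield subfields $F_x,F_y$ with $F_x^\times,F_y^\times\subseteq U_v$. (There is also a smaller, repairable slip in the algebraic-independence step: when $\bar u\in k$, the polynomial $P(\bar u,Y)$ could vanish identically without $P$ being zero, so one must first factor out powers of $X-\bar u$ from $P$ before drawing the contradiction.)

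The paper sidesteps all of this and does not pass through $\G^2(K|k)$ or Lemma~\ref{lemma:birational-bertini-reformulation} here at all. It observes directly that whenever $\bar t$ is transcendental over $k$, the reduction map sends $k[t]$ isomorphically onto $k[\bar t]$, so $v|_{k(t)}$ is trivial and $\kappa_t^\times\subseteq U_v$ with $\kappa_t\in\G^1(K|k)$; it then factors an arbitrary $u\in U_v$ as a product of at most two units with transcendental residue --- namely $u$ itself when $\bar u\notin k$, and $u=(ut_0^{-1})\cdot t_0$ for a fixed $t_0\in U_v$ with transcendental residue when $\bar u\in k$, noting both factors then have transcendental residues. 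You could salvage your approach by inserting precisely this reduction to the transcendental-residue case before invoking Abhyankar, but at that point the detour through a degree-$2$ subfield and the Birational Bertini lemma becomes superfluous.
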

\begin{proof}
Let $v$ be a divisorial valuation of $K$.
Since $\Phi A = A$ for all $A \in \Gf^1(K|k)$, it suffices to prove that $\Uf_v$ is generated by the elements $A$ of $\Gf^1(K|k)$ such that $A \subset \Uf_v$.
Because of this, we see that it suffices to prove that $U_v$ is generated by $F^\times$ for $F \in \G^1(K|k)$ such that $F^\times \subset U_v$.

For $t \in \Oc_v$, denote by $\bar t$ the image of $t$ in $\kappa(v)$.
Since $v$ is divisorial, we know that $k \subset \Oc_v$, and thus $\kappa(v)$ is a field extension of $k$ in a canonical way.
In fact, $\kappa(v)$ is a function field over $k$ of transcendence degree $\trdeg(K|k)-1$.

If $t \in U_v$ is a $v$-unit such that $\bar t$ is transcendental over $k$, then the restriction of $v$ to $k(t)$ is trivial, since the map $\Oc_v \rightarrow \kappa(v)$ sends $k[t]$ isomorphically onto $k[\bar t]$.
Thus, the restriction of $v$ to $\overline{k(t)} \cap K =: \kappa_t \in \G^1(K|k)$ is also trivial since $\kappa_t$ is a finite extension of $k(t)$.
Therefore $\kappa_t^\times \subset U_v$.

Since $\trdeg(K|k) \geq 5$ by assumption, we have $\trdeg(\kappa(v)|k) \geq 4$. 
Thus, there exist (many) elements $t \in U_v$ such that $\bar t$ is transcendental over $k$, and we fix such an element $t_0 \in U_v$.
On the other hand, if $x \in U_v$ is such that $\bar x \in k$, then $x \cdot t_0^{-1}$ and $t_0$ both have transcendental images in $\kappa(v)$, and one has $x = (x \cdot t_0^{-1}) \cdot t_0$.
Therefore, any element of $U_v$ is a product of (at most two) elements $t \in U_v$ such that $\bar t$ is transcendental over $k$.
\end{proof}

Now let $A \in \Gf^1(K|k)$ be given.
For a divisorial valuation $v$ of $K$, let $A_v = A \cap \Uf_v$ denote the kernel of the canonical map
\[ A \hookrightarrow \k_1(K) \twoheadrightarrow \k_1(K)/\Uf_v \cong \Z/\ell. \]
We define:
\[ \Delta_A := \{A_v \ : \ A_v \neq A \}\]
where $v$ varies over the divisorial valuations of $K$.
In particular, for all $T \in \Delta_A$, one has $A/T \cong \Z/\ell$.
Furthermore, since $\Phi A = A$, and $\Phi \Uf_v = \Uf_v$ for all divisorial valuations $v$ of $K$ by Lemma \ref{lemma:units-generated-by-subfields}, we deduce that $\Phi T = T$ for all $T \in \Delta_A$.

Next, we will show that there is a natural correspondence between $\Delta_A$ and (some of) the divisorial valuations of $F$ where $F \in \G^1(K|k)$ is such that $\Omega_K(F) = A$.
\begin{lemma}
\label{lemma:reduction-divisors-to-curves}
	Let $A \in \Gf^1(K|k)$ be given and let $F \in \G^1(K|k)$ be such that $\Omega_K(F) = A$.
	Let $T$ be an element of $\Delta_A$.
	Then there exists a unique divisorial valuation $w_T$ of $F$ such that $\Omega_K(U_{w_T}) = T$.
\end{lemma}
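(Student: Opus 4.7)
The plan is to construct $w_T$ by simply restricting $v$ to $F$, and then verify the group-theoretic equality $\Omega_K(U_{w_T}) = T$ by a direct computation using the valuation. For uniqueness, the key observation is that the canonical map $\k_1(F) \to \k_1(K)$ is injective because $F$ is algebraically closed in $K$ (if $f \in F^\times$ is an $\ell$-th power $g^\ell$ in $K^\times$, then $g$ is a root of $x^\ell - f \in F[x]$, so $g \in F$), so uniqueness in $\k_1(K)$ reduces to uniqueness inside $F$, which follows from weak approximation on the smooth projective curve associated to $F$.

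More precisely, write $T = A \cap \Uf_v$ for some divisorial valuation $v$ of $K$ with $A \not\subset \Uf_v$. The restriction $v|_F$ is trivial on $k$ (since $k \subset \Oc_v$), and it must be nontrivial, because otherwise $F^\times \subset U_v$ would force $A \subset \Uf_v$. Since $\trdeg(F|k)=1$, any nontrivial $k$-trivial valuation of $F$ is a discrete rank $1$ valuation; let $w_T$ be the corresponding divisorial valuation of $F$ (normalized so $w_TF = \Z$), so that $U_{w_T} = F^\times \cap U_v$. Let $e \geq 1$ be defined by $v(F^\times) = e\Z$, giving the identity $v(f) = e \cdot w_T(f)$ on $F^\times$. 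If $\ell$ divided $e$, then $v(f) \equiv 0 \pmod{\ell}$ for every $f \in F^\times$, contradicting $T = A_v \neq A$; so $\gcd(e,\ell) = 1$.

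For the equality $\Omega_K(U_{w_T}) = T$, the inclusion $\subset$ is immediate since $U_{w_T} \subset F^\times \cap U_v$. For the reverse inclusion, take $\bar x \in T$. It has a representative $f \in F^\times$, and membership in $\Uf_v$ forces $v(f) \equiv 0 \pmod\ell$. Using $v(f) = e \cdot w_T(f)$ and $\gcd(e,\ell)=1$, we obtain $\ell \mid w_T(f)$; write $w_T(f) = \ell m$ and pick $g \in F^\times$ with $w_T(g) = m$. Then $f/g^\ell \in U_{w_T}$ and $\Omega_K(f/g^\ell) = \Omega_K(f) = \bar x$, as required.

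For uniqueness, suppose two divisorial valuations $w \neq w'$ of $F$ satisfied $\Omega_K(U_w) = \Omega_K(U_{w'})$. By the injectivity of $\k_1(F) \hookrightarrow \k_1(K)$ noted above, the subgroups $\Omega_F(U_w)$ and $\Omega_F(U_{w'})$ of $\k_1(F)$ would coincide. By weak approximation on $F$ we may choose $t \in F^\times$ with $w(t) = 1$ and $w'(t) = 0$; then $\Omega_F(t) \in \Omega_F(U_{w'}) \setminus \Omega_F(U_w)$ (since $w(t) = 1 \not\equiv 0 \pmod \ell$), a contradiction. The main obstacle — if any — is the ramification issue in Step 3, but it resolves itself cleanly: the hypothesis $T \neq A$ forces the ramification index $e$ to be coprime to $\ell$, which is exactly what is needed to lift $\ell$-divisibility of $v(f)$ to $\ell$-divisibility of $w_T(f)$.
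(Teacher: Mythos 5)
Your proof is correct and follows essentially the same route as the paper: restrict $v$ to $F$, normalize, argue nontriviality from $T \neq A$, and establish uniqueness via weak approximation for inequivalent rank-one valuations on the curve. The only real difference is in how you establish the equality $\Omega_K(U_{w_T}) = T$ (rather than just the inclusion): you explicitly introduce the ramification index $e$ of $v|_F$, prove $\gcd(e,\ell)=1$ from $T \neq A$, and then lift $\ell$-divisibility of $v$-values to $\ell$-divisibility of $w_T$-values, whereas the paper bypasses this by an index argument — both $A/\Omega_K(U_{w_T})$ and $A/T$ are isomorphic to $\Z/\ell$ and one contains the other, forcing equality. Your computation is slightly more explicit and makes the $\ell$-coprimality of the ramification visible, but the content is the same; both variants are sound.
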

\begin{proof}
Let $v$ be a divisorial valuation of $K$ such that $T = A_v$.
Let $w$ be the restriction of $v$ to $F$.
Since $A$ is not contained in $\Uf_v$ by the definition of $\Delta_A$, it follows that $w$ is a non-trivial valuation of $F$.
Since $v K = \Z$, we see that $w F = \Z$ as well.
Moreover, as $v$ is trivial on $k$, the same is true for $w$.
Hence, $w$ is a divisorial valuation of $F$, since $F$ is the function field of a curve over $k$.

It is clear that $\Omega_K(U_w)$ is contained in $T$.
And since $w F = \Z$, it follows that $A/\Omega_K(U_w) \cong \Z/\ell$.
Finally, since $A/T$ is also isomorphic to $\Z/\ell$, it follows that $\Omega_K(U_w) = T$.

If $w'$ is another divisorial valuation of $F$ such that $\Omega_K(U_{w'}) = T$, then $\Omega_F(U_w) = \Omega_F(U_{w'})$ since $\k_1(F) \rightarrow \k_1(K)$ is injective.
Moreover, since $\Omega_F(U_w) = \Omega_F(U_{w'})$ is a proper subgroup of $\k_1(F)$, it follows that $U_w \cdot U_{w'}$ is a \emph{proper} subgroup of $F^\times$.
In particular, $w$ and $w'$ must be \emph{dependent} as valuations of $F$, for otherwise $U_w \cdot U_{w'} = F^\times$ by the approximation theorem for independent valuations (cf. \cite{Engler2005} Theorem 2.4.1).
This implies that $w = w'$ since $w$ and $w'$ are divisorial.
Hence $w =: w_T$ is the unique divisorial valuation of $F$ such that $\Omega_K(U_w) = T$.
\end{proof}

Given a rational subgroup $A \in \Gfor(K|k)$ and $F \in \G^1(K|k)$ such that $\Omega_K(F) = A$, we say that $A$ is {\bf good} if \emph{every} divisorial valuation $w$ of $F$ is of the form $w_T$ for some $T \in \Delta_A$, where $w_T$ is as in Lemma \ref{lemma:reduction-divisors-to-curves}.
Similarly, we will say that an element $t \in K \smallsetminus k$ is {\bf very general} provided that the following two conditions hold:
\begin{enumerate}
 	\item The element $t$ is \emph{general} in $K$, in the sense of \S\ref{sec:recovering-the-mod-ell-algebraic-lattice}.
 	Recall this implies that $\Omega_K(k(t))$ is a rational subgroup of $\k_1(K)$.
 	\item The rational subgroup $\Omega_K(k(t))$ is \emph{good}, as defined above.
 \end{enumerate} 
Note that if $t$ is very general in $K$, then $t^{-1}$, $t+a$ and $t \cdot a$ are also very general in $K$ for all $a \in k^\times$.
The following lemma shows that the restriction of $\Phi$ to any \emph{good} rational subgroup of $\k_1(K)$ has the required form.
\begin{lemma}
\label{lemma:restriction-to-rational-is-epsilon}
	Let $A$ be a good rational subgroup of $\k_1(K)$.
	Then there exists an $\epsilon_A \in (\Z/\ell)^\times$, possibly dependent on $A$, such that the restriction of $\Phi$ to $A$ is precisely $\epsilon_A \cdot \mathbf{1}_A$.
\end{lemma}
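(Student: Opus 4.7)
The plan is to exploit the explicit structure of $A$ as the mod-$\ell$ Kummer group of a rational function field over an algebraically closed field, together with the fact that $\Phi T = T$ for every $T \in \Delta_A$ already established above.  Since $A$ is a good rational subgroup, there is a very general element $t \in K$ such that $F := k(t)$ is algebraically closed in $K$ and $\Omega_K(F) = A$.  Because $k = \bar k$ makes $k^\times$ an $\ell$-divisible group and $k[t]$ is a unique factorization domain, reading off the Kummer generators produces the $\Z/\ell$-basis identification
\[ A \;\cong\; F^\times/\ell \;=\; \bigoplus_{\alpha \in k} (\Z/\ell)\cdot [t-\alpha], \]
where the map $F^\times/\ell \hookrightarrow \k_1(K)$ is injective because $F$ is relatively algebraically closed in $K$.

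Next I would populate $\Delta_A$ with subgroups coming from the divisorial valuations of $F$, which are precisely the point valuations $v_\alpha$ for $\alpha \in k$ together with the degree valuation $v_\infty$.  Writing $T_\alpha := \Omega_K(U_{v_\alpha})$ and $T_\infty := \Omega_K(U_{v_\infty})$, the goodness of $A$ combined with Lemma \ref{lemma:reduction-divisors-to-curves} shows that all of these subgroups lie in $\Delta_A$.  A bookkeeping computation in the basis $\{[t-\beta]\}_\beta$ produces the explicit descriptions
\[ T_\alpha \;=\; \bigoplus_{\beta \neq \alpha} (\Z/\ell)\cdot [t-\beta] \qquad\text{and}\qquad T_\infty \;=\; \ker\!\Bigl( A \xrightarrow{\deg} \Z/\ell \Bigr), \]
where $\deg$ is the degree-mod-$\ell$ map sending each $[t-\beta]$ to $1$.

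Now invoke that $\Phi$ fixes every element of $\Delta_A$.  On the one hand, $\Phi$ stabilizes each $T_\beta$, hence their intersection: for any $\alpha \in k$,
\[ \Phi\Bigl(\bigcap_{\beta \neq \alpha} T_\beta\Bigr) \;=\; \bigcap_{\beta \neq \alpha} T_\beta \;=\; (\Z/\ell)\cdot[t-\alpha], \]
and this one-dimensional $\Phi$-stable line yields a scalar $\epsilon_\alpha \in (\Z/\ell)^\times$ with $\Phi([t-\alpha]) = \epsilon_\alpha\cdot [t-\alpha]$.  On the other hand, $\Phi T_\infty = T_\infty$ forces $\Phi$ to descend to an automorphism of $A/T_\infty \cong \Z/\ell$, necessarily multiplication by a single unit $\epsilon$.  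Since the image of $[t-\alpha]$ in $A/T_\infty$ is the common generator $1$ independent of $\alpha$, comparing with the previous display forces $\epsilon_\alpha = \epsilon$ for every $\alpha \in k$.  Setting $\epsilon_A := \epsilon$ then gives $\Phi|_A = \epsilon_A \cdot \mathbf{1}_A$.

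The only mildly technical point is the explicit identification of $T_\alpha$ and $T_\infty$ inside $A$; this reduces, via the injection $\k_1(F) \hookrightarrow \k_1(K)$, to computing the images of $U_{v_\alpha}$ and $U_{v_\infty}$ in $F^\times/\ell$ by reading off the valuations of the basis elements $t-\beta$, which is routine.  Conceptually, the core of the argument is the observation that the simultaneous stabilizer of the codimension-one family of subgroups $\{T_\alpha\}_{\alpha \in k}$ together with the transverse hyperplane $T_\infty$ inside the Kummer group of $k(t)$ is forced to be a scalar automorphism.
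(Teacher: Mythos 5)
Your proof is correct, and it arrives at the same conclusion as the paper via a closely related but somewhat more coordinate-explicit route. Both arguments rest on the same two pillars: the exact divisor sequence
\[
0 \rightarrow \k_1(F) \xrightarrow{\divv} \bigoplus_{w \in \Df_F} \Z/\ell \cdot [w] \xrightarrow{\rm sum} \Z/\ell \rightarrow 0
\]
for $F = k(t)$ rational, and the fact (via goodness and Lemma~\ref{lemma:reduction-divisors-to-curves}) that $w \mapsto \Omega_K(U_w)$ gives a bijection $\Df_F \to \Delta_A$ onto $\Phi$-stable subgroups. Where you diverge is in how you extract the common scalar. You pick the explicit basis $\{[t-\alpha]\}_{\alpha\in k}$ of $A \cong \k_1(F)$, pin down each line $(\Z/\ell)[t-\alpha] = \bigcap_{\beta\neq\alpha} T_\beta$ as a $\Phi$-stable one-dimensional subspace (giving $\epsilon_\alpha$), and then use the transverse hyperplane $T_\infty$ to synchronize all the $\epsilon_\alpha$. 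The paper instead works with an arbitrary pair $T_1,T_2 \in \Delta_A$, picks $a \in A$ with $\divv_\ell(a) = [T_1]-[T_2]$, and observes that since $\divv_\ell(\Phi a) = \epsilon_{T_1}[T_1] - \epsilon_{T_2}[T_2]$ must lie in the kernel of the sum map, one gets $\epsilon_{T_1} = \epsilon_{T_2}$ directly from exactness; injectivity of $\divv_\ell$ then finishes. Your intersection trick is slightly more constructive and makes visible exactly which one-dimensional subspaces $\Phi$ stabilizes, whereas the paper's version avoids singling out the valuation at infinity and works uniformly with the abstract exact sequence. Both are equally rigorous; yours buys concreteness at the cost of a distinguished choice of coordinate $t$ and the point at infinity, while the paper's is marginally shorter and more symmetric.
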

\begin{proof}
Let $F \in \G^1(K|k)$ be such that $\Omega_K(F) = A$; note that the canonical map $F^\times \rightarrow \k_1(K)$, whose image is $A$, induces an isomorphism $\k_1(F) \xrightarrow{\cong} A$.
Let $\Df_F$ denote the collection of divisorial valuations of $F$.
Since $F$ is rational over $k$, the multiplicative group $F^\times/k^\times$ fits into the following short exact sequence:
\[ 0 \rightarrow F^\times/k^\times \xrightarrow{\divv} \bigoplus_{w \in \Df_F} \Z \cdot [w] \xrightarrow{\text{sum}} \Z \rightarrow 0, \]
where $\divv(x) = \sum_{w \in \Df_F} w(x) \cdot [w]$ is the usual divisor map.
Since $k$ is algebraically closed (hence $k^\times$ is divisible), we see that $(F^\times/k^\times) \otimes \Z/\ell = \k_1(F)$.
Since $\Z$ is torsion-free, we therefore obtain the following short exact sequence, by tensoring the above exact sequence with $\Z/\ell$:
\[ 0 \rightarrow \k_1(F) \xrightarrow{\divv} \bigoplus_{w \in \Df_F} \Z/\ell \cdot [w] \xrightarrow{\text{sum}} \Z/\ell \rightarrow 0. \]

Since $A$ is a good rational subgroup, by Lemma \ref{lemma:reduction-divisors-to-curves}, the map $w \mapsto \Omega_K(U_w)$ is a \emph{bijection} between $\Df_F$ and $\Delta_A$.
Moreover, if $w \in \Df_F$ is a divisorial valuation of $F$ and $T := \Omega_K(U_w) \in \Delta_A$, then there exists an isomorphism $\Psi_T : A/T \rightarrow \Z/\ell$ such that the following diagram commutes:
\[
\xymatrix{
	F^\times \ar@{->>}[d] \ar@{->>}[r]^{w} & \Z \ar@{->>}[r]^{\rm canon.} & \Z/\ell \ar@{=}[d]\\
	A \ar@{->>}[r]_{\rm canon.} & A/T \ar[r]_{\Psi_T} & \Z/\ell
}
\]
In particular, every element of $A$ is contained in all but at most finitely many $T \in \Delta_A$.
Thus, we have a canonical map $\divv_A : A \rightarrow \bigoplus_{T \in \Delta_A} A/T$.
Moreover, combining the isomorphisms $\Psi_T$ for $T \in \Delta_A$, and identifying $\Delta_A$ and $\Df_F$ via the bijection obtained from Lemma \ref{lemma:reduction-divisors-to-curves}, we obtain an isomorphism 
\[ \Psi = (\Psi_T)_T : \bigoplus_{T \in \Delta_A} A/T \rightarrow \bigoplus_{T \in \Delta_A} \Z/\ell \cdot [T] \] 
which makes the following diagram commute:
\[\xymatrix{
\k_1(F) \ar[d]_{\cong} \ar[r]^-{\divv} & \bigoplus_{w \in \Df_F} \Z/\ell \cdot [w] \ar@{=}[r] & \bigoplus_{T \in \Delta_A} \Z/\ell \cdot [T] \\
A \ar[rr]_-{\divv_A} & & \bigoplus_{T \in \Delta_A} A/T \ar[u]_{\Psi}
}\]
We let $\divv_\ell := \Psi \circ \divv_A$, so that $\divv_\ell$ fits into the following short exact sequence:
\[ 0 \rightarrow A \xrightarrow{\divv_\ell} \bigoplus_{T \in \Delta_A} \Z/\ell \cdot [T] \xrightarrow{\text{sum}} \Z/\ell \rightarrow 0. \]

Next, since $\Phi A = A$ and $\Phi T = T$ for all $T \in \Delta_A$, it follows that $\Phi$ induces automorphisms $\Phi_T : A / T \rightarrow A/T$ for each $T \in \Delta_A$, such that $\divv_A \circ \Phi = (\Phi_T)_T \circ \divv_A$.
Moreover, for every $T \in \Delta_A$, there exists some $\epsilon_T \in (\Z/\ell)^\times = \Aut(\Z/\ell)$ such that $\Psi_T \circ \Phi_T = \epsilon_T \cdot \Psi_T$.
In particular, these maps all fit into the following commutative diagram with exact rows:
\[\xymatrix{
0 \ar[r] & A\ar[d]_{\Phi} \ar[r]^-{\divv_\ell} & \bigoplus_{T \in \Delta_A} \Z/\ell \cdot [T] \ar[d]^{(\epsilon_T)_T} \ar[r]^-{\text{sum}} & \Z/\ell \ar@{=}[d]\ar[r] & 0 \\
0 \ar[r] & A \ar[r]_-{\divv_\ell} & \bigoplus_{T \in \Delta_A} \Z/\ell \cdot [T] \ar[r]_-{\text{sum}} & \Z/\ell \ar[r] & 0
}\]

Let $T_1,T_2 \in \Delta_A$ be given.
Then there exists some $a \in A$ such that $\divv_\ell(a) = [T_1]-[T_2]$.
But then $\divv_\ell(\Phi(a)) = \epsilon_{T_1} \cdot [T_1] - \epsilon_{T_2} \cdot [T_2]$, and this implies that $\epsilon_{T_1} = \epsilon_{T_2}$ by the exactness of the bottom row in the diagram above.
Namely, there exists a \emph{single} $\epsilon_A \in (\Z/\ell)^\times$ such that $\Psi_T \circ \Phi_T = \epsilon_A \cdot \Phi_T$ for all $T \in \Delta_A$.
Thus, the injectivity of $\divv_\ell$ in the diagram above implies that the restriction of $\Phi$ to $A$ is precisely multiplication by this $\epsilon_A$.
\end{proof}

The conclusion of the proof of Theorem \ref{thm:Main-Theorem-A-milnor} will rely on the following refinement of Fact \ref{fact:birational-bertini}, which can be seen as a ``Birational Bertini Theorem'' for very general elements.
\begin{proposition}
\label{prop:very-general-bertini}
	In the above notation, let $t_1,t_2$ be elements of $K$ which are algebraically independent over $k$ such that $t_2$ is separable in $K$.
	Then, for all but finitely many $b \in k$, the following holds: For all but finitely many $a \in k$, the element $(t_1+a)/(t_2+b)$ is very general in $K$.
\end{proposition}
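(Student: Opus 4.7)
My plan is to verify separately the two conditions defining ``very general'': (a) $k(s_{a,b})$ is algebraically closed in $K$, and (b) the rational subgroup $A := \Omega_K(k(s_{a,b}))$ is good in the sense of the paragraph preceding Lemma~\ref{lemma:restriction-to-rational-is-epsilon}. A direct computation using Lemma~\ref{lemma:reduction-divisors-to-curves} translates (b) into the following ramification condition: for every divisorial valuation $w$ of $k(s_{a,b})$, there is a divisorial valuation $v$ of $K$ whose restriction to $k(s_{a,b})$ equals $e \cdot w$ with $\gcd(e, \ell) = 1$. Indeed, $A \cap \Uf_v$ is the image in $\k_1(K)$ of $\{x \in k(s_{a,b})^\times : v(x) \in \ell \mathbb{Z}\}$; if $\gcd(e,\ell) = 1$ this set projects onto $\Omega_K(U_w)$ (so $A_v \in \Delta_A$ with $w_{A_v} = w$), whereas if $\ell \mid e$ it equals all of $k(s_{a,b})^\times$ (so $A_v = A$ and $v$ does not contribute to $\Delta_A$).

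\textbf{Step 1 (general).} Fix any $b \in k$. I apply Fact~\ref{fact:birational-bertini} with $F = k$, $M = K$, $x := 1/(t_2+b)$, and $y := t_1/(t_2+b)$. These are algebraically independent over $k$ because $k(x,y) = k(t_1,t_2)$; since $k$ is perfect and $t_2$ is separable in $K$, so is $t_2 + b$, and hence so is $x = 1/(t_2+b)$. The fact then yields that, for all but finitely many $a \in k$, the subfield $k(ax + y) = k((t_1+a)/(t_2+b)) = k(s_{a,b})$ is algebraically closed in $K$.

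\textbf{Step 2 (good, geometric reformulation).} Fix a projective normal model $X$ of $K|k$ and consider the dominant rational map $\phi := (t_1, t_2) : X \dashrightarrow \mathbb{P}^2$. After blowing up to resolve indeterminacies, I pass to a projective normal $\tilde X$ birational to $X$ with a dominant morphism $\phi : \tilde X \to \mathbb{P}^2$. For $P := (-a:-b:1) \in \mathbb{A}^2 \subset \mathbb{P}^2$, the function $s_{a,b}$ corresponds (after further resolution) to the morphism $\pi_{a,b} : \tilde X_{a,b} \to \mathbb{P}^1$ obtained by composing $\phi$ with the projection from $P$, and the fiber $\pi_{a,b}^{-1}(c)$ is birational to $\phi^{-1}(L_{a,b,c})$ for $L_{a,b,c} \subset \mathbb{P}^2$ the line through $P$ indexed by $c$. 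Thus condition (b) becomes: for every $c \in \mathbb{P}^1(k)$, the divisor $\phi^{-1}(L_{a,b,c})$ contains at least one prime component of multiplicity coprime to $\ell$. Set
\[
V := \{ L \in \check{\mathbb{P}}^2 \,:\, \text{every prime component of } \phi^{-1}(L) \text{ has multiplicity divisible by } \ell \};
\]
the bad locus for $(a,b)$ in $\mathbb{A}^2$ is the union of the affine lines in $\mathbb{A}^2$ coming from lines $L \in V$.

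\textbf{Main obstacle.} The crux is to prove that $V$ is a \emph{finite} set of lines in $\mathbb{P}^2$. Granting this, the bad locus is a finite union of lines in $\mathbb{A}^2$, whose intersection with $\{b\} \times \mathbb{A}^1$ is finite for all but finitely many $b \in k$, producing the quantifier structure of the proposition. To prove finiteness of $V$, I would argue as follows. Any $L \in V$ forces $\phi^{-1}(L) = \ell \cdot E_L$ in $\Div(\tilde X)$, hence the class $H := \phi^* \mathcal{O}_{\mathbb{P}^2}(1) \in \Pic(\tilde X)$ is $\ell$-divisible; if $H$ is not $\ell$-divisible then $V = \varnothing$ and we are done. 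Otherwise, the classes $[E_L]$ lie in a finite coset of $\Pic(\tilde X)[\ell]$, and Bertini's theorem applied to the basepoint-free linear system of pullbacks of lines (together with the observation that $\Char K \neq \ell$ implies any inseparability multiplicity of $\phi$ is a power of $\Char K$ and hence coprime to $\ell$) shows that the generic member of this linear system is reduced. Combining this with a dimension count on the incidence variety $\{(x, L) \in \tilde X \times \check{\mathbb{P}}^2 : \phi(x) \in L\} \to \check{\mathbb{P}}^2$ and ruling out any positive-dimensional components of $V$ (which would force, via B\'{e}zout over $k = \bar k$, a bad line through every point of $\mathbb{P}^2$, contradicting Bertini applied to generic pencils) forces $V$ to be $0$-dimensional. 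The careful execution of this last step is where the principal technical work lies.
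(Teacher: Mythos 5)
Your Step 1 (establishing generality via Fact~\ref{fact:birational-bertini} with $x = 1/(t_2+b)$, $y = t_1/(t_2+b)$) is correct, and your translation of ``good'' into a ramification condition on divisorial valuations of $k(s_{a,b})$ is also correct. The genuine gap is exactly where you flag it: the finiteness of the set $V$ of ``bad'' lines. As written, this is not a proof — it is a sketch whose pieces are not assembled, and several of them are shaky even as sketched. You would first need to show that $V$ is a constructible/algebraic subset of $\check{\Pbb}^2$ before speaking of its dimension; the assertion ``the classes $[E_L]$ lie in a finite coset of $\Pic(\tilde X)[\ell]$'' requires care for a merely normal $\tilde X$, particularly in positive characteristic where resolution of singularities is unavailable; and it is not clear that ``Bertini applied to generic pencils'' actually produces a contradiction from a one-dimensional $V$ — the Bézout argument shows there would be a bad line through every affine point, but to turn that into a contradiction you need to have produced at least one good pair $(a,b)$ independently, which is just the statement you are trying to prove. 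So the argument is circular or, at best, unfinished at the crucial step.

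The paper avoids all of this machinery. It extends $t_1,t_2$ to a transcendence base $\tbf = (t_1,\ldots,t_d)$ of $K|k$ and works with the \emph{affine} space $X_0 = \A^d_{k,\tbf}$ and the canonical dominant map $\pi_{a,b}\colon X_0 \rightarrow \Pbb^1_k$. The decisive observation is that the fibers of $\pi_{a,b}$ over closed points of $\Pbb^1_k$ are explicit integral \emph{hyperplanes}: $(t_1+a) = c(t_2+b)$ for $c\in\A^1$, and $t_2+b=0$ at $\infty$. Lemma~\ref{lemma:mod-ell-ramification} already says that all but finitely many prime divisors of $X_0$ are $\ell$-unramified in $K$. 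A given bad prime divisor $D$ of $X_0$ can coincide with a fiber of $\pi_{a,b}$ only if $D$ is a hyperplane of that explicit shape, and a direct (linear-algebra) computation shows the pairs $(a,b)$ for which this happens form a line in the $(a,b)$-plane (a horizontal line for the $\infty$-fiber). As there are only finitely many bad divisors, excluding finitely many $b$ and then, for each remaining $b$, finitely many $a$ guarantees that \emph{every} fiber of $\pi_{a,b}$ is $\ell$-unramified in $K$; combined with the generality from Step~1, this gives very generality by the argument you already describe. This is considerably lighter than anything involving projective models, Picard torsion, or incidence varieties, and it has the virtue of actually being complete. If you want to salvage your approach, you would need to carry out the finiteness-of-$V$ argument in full — but you would be proving, by harder means, something that Lemma~\ref{lemma:mod-ell-ramification} on the affine model already hands you for free.
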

\begin{proof}
Extend $t_1,t_2$ to a transcendence base $\tbf = (t_1,\ldots,t_d)$ for $K|k$ and consider $X_0 = \A^n_{k,\tbf}$, affine $n$-space over $k$ with coordinates $\tbf$.
Given $a,b \in k$, we put $t_{a,b} := (t_1+a)/(t_2+b)$.
The inclusion $k(t_{a,b}) \hookrightarrow k(\tbf)$ induces a dominant rational map $\pi_{a,b} : X_0 \rightarrow \Pbb^1_k$, defined on $k$-points by $(a_1,\ldots,a_n) \mapsto (a_1+a)/(a_2+b)$.
The inclusion of function fields induced by $\pi_{a,b}$ is the map $k(t) \hookrightarrow k(\tbf)$ sending $t$ to $t_{a,b}$, and we will identify $k(t_{a,b})$ with the function field of $\Pbb^1_k$.
The fibers of $X_0 \rightarrow \Pbb^1_k$ over the closed points of $\Pbb^1_k$ are of the following form:
\begin{enumerate}
	\item For $c \in \A^1_k(k)$, the fiber over $c$ is the linear subvariety of $X_0$ defined by the equation $(t_1+a) = c \cdot (t_2+b)$.
	\item For $\infty \in \Pbb^1_k(k) \smallsetminus \A^1_k(k)$, the fiber over $\infty$ is the linear subvariety of $X_0$ defined by the equation $t_2+b = 0$.
\end{enumerate}
In particular, the map $\pi_{a,b}$ is surjective.

By Fact \ref{fact:birational-bertini}, we see that for any given $b \in k$, the element $t_{a,b} = t_1/(t_2+b)+a/(t_2+b)$ is general in $K$ for all but finitely many $a \in k$.
Also, Lemma \ref{lemma:mod-ell-ramification} says that all but finitely many prime-divisors of $X_0$ are $\ell$-unramified in $K$.
Thus, by the discussion above, for all but finitely many $b \in k$, the following two conditions hold for all but finitely many $a \in k$ (the excluded $a$'s possibly depending on $b$): 
\begin{enumerate}
	\item The element $t_{a,b}$ is general in $K$.
	\item For any closed point $x$ of $\Pbb^1_k$, the fiber of the map $\pi_{a,b}$ above $x$ is $\ell$-unramified in $K$.
\end{enumerate}
To conclude the proof, we will show that any $t_{a,b}$ satisfying these two conditions is actually very general in $K$.
By condition (1), we know that $t_{a,b}$ is already general in $K$.
Thus, setting $A := \Omega_K(k(t_{a,b}))$, we must show that every divisorial valuation $w$ of $k(t_{a,b})$ is of the form $w_T$ for some $T \in \Delta_A$, where $w_T$ is as described in Lemma \ref{lemma:reduction-divisors-to-curves}.

Let $w$ be a divisorial valuation of $k(t_{a,b})$ and let $x$ be the unique closed point (i.e. prime-divisor) of $\Pbb^1_k$ such that $w = w_x$.
Let $P_0$ be the generic point of $(X_0)_x$, the fiber of $\pi_{a,b}$ over $x$, and let $v_0 = v_{P_0}$ be the valuation of $k(\tbf)$ associated to $P_0$.
Then $w$ is the restriction of $v_0$ to $k(t_{a,b})$, and $v_0 k(\tbf) = w k(t_{a,b})$.

Let $X$ denote the normalization of $X_0$ in $K$, and let $P$ be a prime divisor on $X$ which prolongs $P_0$.
Furthermore, let $v = v_P$ denote the valuation of $K$ associated to $P$.
Then $w$ is the restriction of $v$ to $k(t_{a,b})$ and $v_0$ is the restriction of $w$ to $k(\tbf)$.
Furthermore, as $P_0$ is $\ell$-unramified in $K$, we see that $v_0k(\tbf)$ is not contained in $\ell \cdot v K$, and therefore $w k(t_{a,b})$ is not contained in $\ell \cdot v K$. 
In particular, $\Omega_K(k(t_{a,b})) = A$ is not contained in $\Uf_v$.
Thus, $\Uf_v \cap A =: T$ is an element of $\Delta_A$.
Finally, the argument of Lemma \ref{lemma:reduction-divisors-to-curves} shows that $w = w_T$.
Namely, it is clear that $\Omega_K(U_w) \subset T$, while both $A/\Omega_K(U_w)$ and $A/T$ are isomorphic to $\Z/\ell$; hence $\Omega_K(U_w) = T$.
\end{proof}

By Lemma \ref{lemma:restriction-to-rational-is-epsilon}, in order to conclude the proof, we must prove the following two assertions:
\begin{enumerate}
	\item That $\k_1(K)$ is generated by its good rational subgroups.
	\item That for any two good rational subgroups $A,B$ of $\k_1(K)$, one has $\epsilon_A = \epsilon_B$, where $\epsilon_A, \epsilon_B \in (\Z/\ell)^\times$ are as in the statement of Lemma \ref{lemma:restriction-to-rational-is-epsilon}.
\end{enumerate}

For assertion (1), assume that $x \in K^\times \smallsetminus k^\times$ is separable in $K$.
Let $y$ be an element of $K^\times$ which is algebraically independent from $x$.
By Proposition \ref{prop:very-general-bertini}, there exist $a,b \in k$ such that both $(xy-ab)/(x+b)$ and $(y+a)/(x+b)$ are very general in $K$.
Therefore, $(xy+ax)/(x+b) = (xy-ab)/(x+b)+a$ and $(x+b)/(y+a) = ((y+a)/(x+b))^{-1}$ are also very general in $K$.
Hence 
\[ x = \left(\frac{xy+ax}{x+b}\right) \cdot \left(\frac{x+b}{y+a}\right)
\]
is a product of two very general elements of $K$.
Since any non-constant element of $K$ is a power of an element of $K$ which is separable in $K$, we deduce that $\k_1(K)$ is generated by its good rational subgroups.

For assertion (2), assume that $A$ and $B$ are two \emph{different} good rational subgroups of $\k_1(K)$, and that $x,y$ are very general elements of $K$ such that $A = \Omega_K(k(x))$ and $\Omega_K(k(y)) = B$.
Note that $x,y$ must be algebraically independent over $k$.
By Proposition \ref{prop:very-general-bertini}, there exists $b \in k$ such that for all but finitely many $a \in k$, the element $(x+a)/(y+b)$ is also very general in $K$.
Since $\k_1(k(x)) \rightarrow \k_1(K)$ is injective, we can choose an $a \in k$ such that $(x+a)/(y+b) =: z$ is very general, and such that $(x+a)$ and $(y+b)$ have $\Z/\ell$-independent images in $\k_1(K)$.
Let $C = \Omega_K(k(z))$, and let $\epsilon_A,\epsilon_B,\epsilon_C \in (\Z/\ell)^\times$ be as in Lemma \ref{lemma:restriction-to-rational-is-epsilon}.
Now we calculate in $\k_1(K)$:
\[
	\frac{(x+a)^{\epsilon_A}}{(y+b)^{\epsilon_B}} = \frac{\Phi(x+a)}{\Phi(y+b)} = \Phi\left(\frac{x+a}{y+b}\right) = \left(\frac{x+a}{y+b}\right)^{\epsilon_C} = \frac{(x+a)^{\epsilon_C}}{(y+b)^{\epsilon_C}}. 
\]
Since $(x+a)$ and $(y+b)$ have independent images in $\k_1(K)$, we deduce that $\epsilon_A = \epsilon_C = \epsilon_B$, as required.
This completes the proof of Theorem \ref{thm:Main-Theorem-A-milnor}.

\section{The Zassenhauss Filtration and Galois Cohomology}
\label{sec:zassenhauss}

In this section we recall the cohomological framework which allows us to translate between The ``Galois variant'' of our Main Theorem (Theorem \ref{thm:Main-Theorem-A}) and the ``Milnor variant'' of our Main Theorem (Theorem \ref{thm:Main-Theorem-A-milnor}).
The results in this section are by no means new -- they are merely a precise formulation of the well-known fact that the cup-product in mod-$\ell$ cohomology is ``dual'' to the commutator in the mod-$\ell$ central descending series, combined with the Merkurjev-Suslin Theorem \cite{Merkurjev1982}.
The essential calculations concerning commutators and cup products were first carried out by Labute \cite{Labute1967} (see also the exposition in \cite{Neukirch2008} \S3.9).
These calculations have seen a recent resurgence in \cite{Chebolu2009}, \cite{Efrat2011b}, \cite{Efrat2011c}, \cite{2013arXiv1310.5613T}, especially in connection with the Merkurjev-Suslin Theorem \cite{Merkurjev1982} and the recent proof of the Bloch-Kato conjecture due to Voevodsky-Rost et al. \cite{Voevodsky2011}, \cite{Rost-chain-lemma}, \cite{zbMATH05594008}.
Nevertheless, the Merkurjev-Suslin Theorem \cite{Merkurjev1982} suffices for the considerations in this section, and we summarize the appropriate cohomological and group-theoretical calculations below.

Let $\Gc$ be a pro-$\ell$ group.
We will denote the mod-$\ell$ cohomology $\H^*(\Gc,\Z/\ell)$ of $\Gc$ simply by $\H^*(\Gc)$.
Recall that the Bockstein morphism 
\[ \beta : \H^1(\Gc) \rightarrow \H^2(\Gc) \]
is defined to be the connecting homomorphism associated to the short exact sequence of coefficient modules $0 \rightarrow \Z/\ell \rightarrow \Z/\ell^2 \rightarrow \Z/\ell \rightarrow 0$.

We recall the first two terms in the mod-$\ell$ Zassenhauss filtration of $\Gc$:
\begin{enumerate}
	\item $\Gc^{(2)} := [\Gc,\Gc]\cdot\Gc^\ell$.
	\item If $\ell \neq 2$ then $\Gc^{(3)} := [\Gc,\Gc^{(2)}] \cdot \Gc^\ell$.
	\item If $\ell = 2$ then $\Gc^{(3)} := [\Gc,\Gc^{(2)}] \cdot (\Gc^{(2)})^\ell$. 
\end{enumerate}
We denote the quotient $\Gc/\Gc^{(2)}$ by $\Gc^a$ and the quotient $\Gc/\Gc^{(3)}$ by $\Gc^c$.
In particular, note that $\Gc^a$ is an $\ell$-elementary abelian pro-$\ell$ group, and that $\Gc^c \rightarrow \Gc^a$ is a central extension whose kernel is also $\ell$-elementary abelian.

For $\sigma,\tau \in \Gc^a$, we define $[\sigma,\tau] := \tilde\sigma^{-1} \tilde\tau^{-1} \tilde\sigma\tilde\tau$ where $\tilde\sigma,\tilde\tau \in \Gc^c$ are some lifts of $\sigma,\tau \in \Gc^a$.
Since $\Gc^c \rightarrow \Gc^a$ is a central extension, the element $[\sigma,\tau] \in \Gc^{(2)}/\Gc^{(3)}$ doesn't depend on the choice of lifts $\tilde\sigma,\tilde\tau$.

For a discrete $\Z/\ell$-module $M$, define $\wedge^2(M) := {M \otimes M}/\langle x \otimes x \ : \ x \in M \rangle$.
For a pro-$\ell$ $\Z/\ell$-module $\Gc^a$, we define:
\[ \widehat \wedge^2(\Gc^a) := \varprojlim_{H} \wedge^2(\Gc^a/H) \]
where $H$ varies over the open subgroups of $\Gc^a$ and thus $\Gc^a/H$ is a finite (hence discrete) $\Z/\ell$-module.

For a pro-$\ell$ $\Z/\ell$-module $\Gc^a$, let $(\Gc^a)^\vee$ denote the Pontryagin-dual of $\Gc^a$.
Namely, $(\Gc^a)^\vee := \Hom(\Gc^a,\Z/\ell)$ is a discrete $\Z/\ell$-module.
Thus, the canonical perfect pairing $\Gc^a \times (\Gc^a)^\vee \rightarrow \Z/\ell$ induces a perfect pairing of wedge-products:
\[ \widehat \wedge^2(\Gc^a) \times \wedge^2((\Gc^a)^\vee) \rightarrow \Z/\ell \]
which is defined in the usual way by $(\sigma \wedge \tau) \times (f \wedge g) \mapsto f(\sigma) \cdot g(\tau) - f(\tau) \cdot g(\sigma)$.

\subsection{A presentation of $\H^2$}
\label{sub:presentation-of-H2}
Since $\Gc^a$ is isomorphic to a direct power of $\Z/\ell$, the K\"unneth formula for (profinite) group cohomology, together with the well-known structure of the cohomology ring $\H^*(\Z/\ell,\Z/\ell)$, yield the following fact.
See \cite{Efrat2011b} \S2.D, and/or \cite{2013arXiv1310.5613T} Lemma 2.1 for more details.
\begin{fact}
\label{fact:H2-presentation}
	In the notation above, the canonical map 
	\[
		\xymatrix@R-2pc{
		(\H^1(\Gc^a) \otimes \H^1(\Gc^a)) \oplus \H^1(\Gc^a) \ar[r] & \H^2(\Gc^a) \\
		(x \otimes y) \oplus z \ar@{|->}[r] & x \cup y + \beta z
		}
	\]
	is surjective, and the kernel of this map is generated by all elements of the form $(x \otimes x) \oplus {\ell \choose 2} \cdot x$ for $x$ varying in $\H^1(\Gc^a)$.
\end{fact}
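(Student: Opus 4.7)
The plan is to reduce to the case of a finite elementary abelian $\ell$-group and then invoke the well-known structure of $\H^*((\Z/\ell)^n, \Z/\ell)$ obtained from the K\"unneth formula.

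First I would observe that continuous mod-$\ell$ cohomology of a pro-$\ell$ group commutes with filtered inverse limits of discrete quotients, so $\H^i(\Gc^a) = \varinjlim_H \H^i(\Gc^a/H)$, where $H$ runs over open subgroups of $\Gc^a$ and each quotient $V_H := \Gc^a/H$ is a finite-dimensional $\Z/\ell$-vector space. Both sides of the proposed presentation are evidently compatible with this colimit (direct sum, tensor product over $\Z/\ell$, cup product, and Bockstein are all compatible with the maps $V_{H'} \twoheadrightarrow V_H$ for $H \subset H'$), so it suffices to prove the statement for $\Gc^a = V \cong (\Z/\ell)^n$ a finite elementary abelian $\ell$-group.

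Second, using the K\"unneth formula for mod-$\ell$ group cohomology together with the classical computation of $\H^*(\Z/\ell,\Z/\ell)$, I would write down an explicit basis of $\H^1(V)$ and $\H^2(V)$. Pick a basis $e_1,\ldots,e_n$ of $V$ and let $x_1,\ldots,x_n \in \H^1(V) = V^\vee$ be the dual basis. Then $\H^1(V)$ has basis $\{x_i\}_{1 \leq i \leq n}$, and $\H^2(V)$ has basis $\{x_i \cup x_j : i<j\} \cup \{\beta x_i : 1 \leq i \leq n\}$ in both parities, where for $\ell=2$ one uses the identity $\beta x_i = x_i \cup x_i$. In particular the natural map
\[
\mu : (\H^1(V) \otimes \H^1(V)) \oplus \H^1(V) \longrightarrow \H^2(V), \qquad (x \otimes y) \oplus z \longmapsto x \cup y + \beta z,
\]
is surjective simply by producing each basis element of $\H^2(V)$ in its image.

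Third, I would identify the kernel of $\mu$. The basic relations of $\H^*(\Z/\ell,\Z/\ell)$ translate into: for $\ell$ odd, $x \cup x = 0$ and (by graded-commutativity) $x \cup y + y \cup x = 0$, while $\beta$ is $\Z/\ell$-linear; for $\ell = 2$, $x \cup x = \beta x$ and $x \cup y = y \cup x$. In both cases the identity ${\ell \choose 2} \equiv 0 \pmod \ell$ for $\ell$ odd and ${\ell \choose 2} = 1$ for $\ell = 2$ means that the element $(x \otimes x) \oplus {\ell \choose 2} \cdot x$ lies in $\ker\mu$ for every $x \in \H^1(V)$. To show these elements generate all of $\ker\mu$, I would compute dimensions: polarizing $x \otimes x$ via
\[
(x+y) \otimes (x+y) - x \otimes x - y \otimes y = x \otimes y + y \otimes x,
\]
and noting that ${\ell \choose 2}$ is linear, shows that the subspace $N \subset (\H^1)^{\otimes 2} \oplus \H^1$ generated by these relations also contains all $x \otimes y + y \otimes x$. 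The quotient $((\H^1)^{\otimes 2} \oplus \H^1)/N$ therefore has dimension at most $\binom{n}{2} + n$ for $\ell$ odd and $\binom{n+1}{2}$ for $\ell = 2$, each matching $\dim \H^2(V)$; since $\mu$ is already surjective, this forces $N = \ker\mu$.

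The only mildly delicate point is the uniform handling of the two parities via the single binomial coefficient ${\ell \choose 2}$, but this is really just a bookkeeping observation. Passing back from $V_H$ to $\Gc^a$ via the colimit completes the argument.
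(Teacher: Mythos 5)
Your proof is correct and follows exactly the approach the paper points to: reduce to a finite elementary abelian $\ell$-group via the colimit presentation of continuous cohomology, then apply the K\"unneth formula and the classical structure of $\H^*(\Z/\ell,\Z/\ell)$ to get the explicit basis of $\H^2$, verify surjectivity, and close the argument with the polarization-plus-dimension-count to pin down the kernel. The paper states Fact~\ref{fact:H2-presentation} without an explicit proof (citing Efrat--Min\'a\v{c} \S2.D and Topaz, Lemma~2.1), but the cited strategy is precisely the one you carry out, with the single binomial coefficient $\binom{\ell}{2}$ uniformly handling the two parities just as intended.
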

Thus, if $\ell \neq 2$, then the map $\wedge^2(\H^1(\Gc^a)) \oplus \H^1(\Gc^a) \rightarrow \H^2(\Gc^a)$, defined by $(x \wedge y) \oplus z \mapsto x \cup y + \beta z$, is an isomorphism.
On the other hand, if $\ell = 2$, then the cup product map yields an isomorphism $\Sym^2(\H^1(\Gc^a)) \rightarrow \H^2(\Gc^a)$, where $\Sym^2$ denotes the second symmetric power.

The decomposable part of $\H^2(\Gc^a)$, denoted $\H^2(\Gc^a)_{\rm dec}$, is defined as the image of the cup product $\H^1(\Gc^a) \otimes \H^1(\Gc^a) \rightarrow \H^2(\Gc^a)$.
Thus, if $\ell \neq 2$, then the cup product yields an isomorphism $\wedge^2(\H^1(\Gc^a)) \rightarrow \H^2(\Gc^a)_{\rm dec}$.
On the other hand, if $\ell = 2$, then $\H^2(\Gc^a) = \H^2(\Gc^a)_{\rm dec}$.

\subsection{Cup-products and the Zassenhauss filtration}
\label{sub:zassen-and-cup-products}

Since the inflation map $\H^1(\Gc^a) \rightarrow \H^1(\Gc)$ is an isomorphism, we see that the Lyndon-Hochschild-Serre spectral sequence associated to the extension $\Gc \rightarrow \Gc^a$ yields an exact sequence:
\begin{align}
\label{align:LHS}
0 \rightarrow \H^1(\Gc^{(2)})^\Gc \xrightarrow{d_2} \H^2(\Gc^a) \xrightarrow{\inf} \H^2(\Gc),
\end{align}
where $d_2 := d_2^{0,1}$ is the associated differential on the $E_2$-page of the spectral sequence.
The map $d_2$ above also agrees with the \emph{transgression map} (cf. \cite{Neukirch2008} Theorem 2.4.3).
Also, note that $\H^1(\Gc^{(2)})^\Gc = \Hom_{\Gc}(\Gc^{(2)},\Z/\ell)$ is the set of $\Gc$-equivariant homomorphisms $\Gc^{(2)} \rightarrow \Z/\ell$, where $\Gc$ acts on $\Gc^{(2)}$ by conjugation and trivially on $\Z/\ell$.
Since $\Gc^c \rightarrow \Gc^a$ is a central extension with kernel $\Gc^{(2)}/\Gc^{(3)}$ which is killed by $\ell$, we obtain a canonical \emph{injective} map:
\[ (\Gc^{(2)}/\Gc^{(3)})^\vee \hookrightarrow \H^1(\Gc^{(2)})^\Gc. \]
The following lemma describes the image of this injection with respect to $d_2$.
This lemma is essentially identical to \cite{Efrat2011c} Corollary 10.9, but we still provide a detailed proof to keep the discussion as self-contained as possible.

\begin{lemma}
\label{lemma:pre-image-of-H2-dec}
The image of the canonical injective map $(\Gc^{(2)}/\Gc^{(3)})^\vee \hookrightarrow \H^1(\Gc^{(2)})^\Gc$ is precisely $d_2^{-1}(\H^2(\Gc^a)_{\rm dec})$, the preimage of the decomposable part of $\H^2(\Gc^a)$ under the map $d_2 : \H^1(\Gc^{(2)})^\Gc \rightarrow \H^2(\Gc^a)$.
\end{lemma}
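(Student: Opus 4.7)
The plan is to translate the lemma into a purely cohomological identity via the Lyndon-Hochschild-Serre (LHS) spectral sequence, and then verify that identity by a direct analysis of the elementary abelian central extensions classified by $\H^2(\Gc^a)$.

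First, I would apply the 5-term LHS sequence both to $1 \to \Gc^{(2)} \to \Gc \to \Gc^a \to 1$ and to $1 \to \Gc^{(2)}/\Gc^{(3)} \to \Gc^c \to \Gc^a \to 1$. In each case the inflation $\H^1(\Gc^a) \to \H^1(\Gc)$ (resp.\ $\H^1(\Gc^a) \to \H^1(\Gc^c)$) is an isomorphism, so each transgression is injective, and functoriality applied to the natural map between the two extensions identifies the composite $(\Gc^{(2)}/\Gc^{(3)})^\vee \hookrightarrow \H^1(\Gc^{(2)})^\Gc \xrightarrow{d_2} \H^2(\Gc^a)$ with the transgression of the lower extension. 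Consequently
\[ d_2\bigl((\Gc^{(2)}/\Gc^{(3)})^\vee\bigr) = \ker\bigl(\inf \colon \H^2(\Gc^a) \to \H^2(\Gc^c)\bigr), \quad d_2\bigl(\H^1(\Gc^{(2)})^\Gc\bigr) = \ker\bigl(\inf \colon \H^2(\Gc^a) \to \H^2(\Gc)\bigr), \]
so by the injectivity of $d_2$ the lemma becomes equivalent to the identity
\[ \ker\bigl(\H^2(\Gc^a) \to \H^2(\Gc^c)\bigr) = \H^2(\Gc^a)_{\rm dec} \cap \ker\bigl(\H^2(\Gc^a) \to \H^2(\Gc)\bigr). \]

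The case $\ell = 2$ is essentially trivial: $\H^2(\Gc^a) = \H^2(\Gc^a)_{\rm dec}$ by Fact~\ref{fact:H2-presentation}, and any $\Gc$-equivariant $f \colon \Gc^{(2)} \to \Z/2$ automatically vanishes on $\Gc^{(3)} = [\Gc,\Gc^{(2)}] \cdot (\Gc^{(2)})^2$, so the injection $(\Gc^{(2)}/\Gc^{(3)})^\vee \hookrightarrow \H^1(\Gc^{(2)})^\Gc$ is already a bijection. For $\ell \neq 2$ the crucial observation is that $\Gc^\ell \subseteq \Gc^{(3)}$, so $\Gc^c$ has exponent $\ell$. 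Any class $\alpha \in \H^2(\Gc^a)$ is realized by a central extension $1 \to \Z/\ell \to E_\alpha \to \Gc^a \to 1$ which, after fixing a set-theoretic section $s$, is determined by its commutator pairing $c_\alpha$ and its $\ell$-th power map $p_\alpha \colon \sigma \mapsto s(\sigma)^\ell$. A short Hall-Petresco calculation shows $p_\alpha$ is $\Z/\ell$-linear for $\ell$ odd, and Fact~\ref{fact:H2-presentation} then translates the direct sum decomposition $\H^2(\Gc^a) = \H^2(\Gc^a)_{\rm dec} \oplus \beta(\H^1(\Gc^a))$ into the criterion: $\alpha \in \H^2(\Gc^a)_{\rm dec}$ if and only if $p_\alpha \equiv 0$.

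With this dictionary in hand, both inclusions fall out. For the forward one, a splitting $\rho \colon \Gc^c \to E_\alpha$ of $\alpha \in \ker(\H^2(\Gc^a) \to \H^2(\Gc^c))$ forces $p_\alpha \equiv 0$: lifting $\sigma \in \Gc^a$ to $\tilde\sigma \in \Gc^c$, exponent $\ell$ gives $\tilde\sigma^\ell = 1$ in $\Gc^c$, so $0 = \rho(\tilde\sigma)^\ell = p_\alpha(\sigma)$, using that $\rho(\tilde\sigma) = z \cdot s(\sigma)$ for some central $z \in \Z/\ell$ and $z^\ell = 0$ for $\ell$ odd. Conversely, if $\alpha \in \H^2(\Gc^a)_{\rm dec} \cap \ker(\H^2(\Gc^a) \to \H^2(\Gc))$ and $\rho \colon \Gc \to E_\alpha$ is a splitting, then $p_\alpha \equiv 0$ yields $\rho(g)^\ell = 0$ for every $g \in \Gc$, hence $\rho(\Gc^\ell) = 0$; combined with $\rho([\Gc,\Gc^{(2)}]) = 0$ from the centrality of $\rho(\Gc^{(2)}) \subseteq \Z/\ell$, this gives $\rho(\Gc^{(3)}) = 0$, so $\rho$ descends to $\Gc^c$ and $\alpha \in \ker(\H^2(\Gc^a) \to \H^2(\Gc^c))$. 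The main technical point to justify is the equivalence $\alpha \in \H^2(\Gc^a)_{\rm dec} \Longleftrightarrow p_\alpha \equiv 0$, which is essentially Labute's classical cocycle computation and which can be extracted from Fact~\ref{fact:H2-presentation} combined with the standard interpretation of the Bockstein as the obstruction to lifting a character to $\Z/\ell^2$.
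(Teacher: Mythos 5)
Your argument is correct, and it reaches the same conclusion via a noticeably different packaging. The paper works directly at the level of characters: it identifies $\H^1(\Gc^{(2)})^\Gc$ with $(\Gc^{(2)}/\Gc^{[3]})^\vee$ where $\Gc^{[3]} = [\Gc,\Gc^{(2)}](\Gc^{(2)})^\ell$, observes (for $\ell\neq 2$) that $\Gc^{(3)}$ is generated by $\Gc^{[3]}$ and $\Gc^\ell$, and then invokes the cocycle/transgression formula of Neukirch--Schmidt--Wingberg (Proposition~3.9.14) to get $-f(\tilde\sigma^\ell) = z(\sigma)$, where $z$ is the Bockstein component of $d_2 f$; it follows that $f$ kills $\Gc^\ell$ iff $z = 0$ iff $d_2 f$ is decomposable. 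Your proposal instead first translates the statement, via injectivity of the two transgressions and functoriality, into the cohomological identity $\ker(\H^2(\Gc^a)\to\H^2(\Gc^c)) = \H^2(\Gc^a)_{\rm dec}\cap\ker(\H^2(\Gc^a)\to\H^2(\Gc))$, and then proves it by classifying central $\Z/\ell$-extensions of $\Gc^a$ through their $\ell$-th power maps $p_\alpha$ and testing whether a given splitting descends to $\Gc^c = \Gc/\Gc^{(3)}$. This is more group-theoretic and avoids citing the explicit NSW cocycle formula, at the cost of deferring the key dictionary $\alpha\in\H^2(\Gc^a)_{\rm dec}\Leftrightarrow p_\alpha\equiv 0$ to a side computation. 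That deferred fact is precisely what the paper extracts from NSW; it is also easy to see directly from Fact~\ref{fact:H2-presentation} once one notes that Baer sum is additive on $p_\alpha$, that $p_{x\cup y}\equiv 0$ (Heisenberg-type extensions have exponent $\ell$ for $\ell$ odd), and that $p_{\beta z}=\pm z$ (from $0\to\Z/\ell\to\Z/\ell^2\to\Z/\ell\to 0$). In short: same underlying computation relating the Bockstein to $\ell$-th powers, reached via a cleaner extension-theoretic reformulation rather than the paper's direct character-level calculation.
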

\begin{proof}
Let $\Gc^{[3]} := [\Gc,\Gc^{(2)}] \cdot (\Gc^{(2)})^\ell$ denote the third term in the mod-$\ell$ central descending series.
I.e. $\Gc^{[3]}$ is the left-kernel of the canonical pairing $\Gc^{(2)} \times \H^1(\Gc^{(2)})^\Gc \rightarrow \Z/\ell$ (see e.g. \cite{Efrat2011b} \S2.A).
In other words, we may identify the following three groups:
\[ \H^1(\Gc^{(2)})^\Gc = \Hom_\Gc(\Gc^{(2)},\Z/\ell) = (\Gc^{(2)}/\Gc^{[3]})^\vee.\]
Moreover, note that one has $\Gc^{[3]} \subset \Gc^{(3)}$.
If $\ell \neq 2$, then $\Gc^{(3)}$ is (topologically) generated by $\Gc^{[3]}$ and $\Gc^\ell$.
On the other hand, if $\ell = 2$, then $\Gc^{[3]} = \Gc^{(3)}$ by definition.

\vskip 5pt
\noindent\emph{Case $\ell = 2$:} 
In this case, one has $\Gc^{[3]} = \Gc^{(3)}$ and thus $(\Gc^{(2)}/\Gc^{(3)})^\vee = \H^1(\Gc^{(2)})^\Gc$.
Also, Fact \ref{fact:H2-presentation} implies that $\H^2(\Gc^a)_{\rm dec} = \H^2(\Gc^a)$.
Thus, the lemma follows trivially.

\vskip 5pt
\noindent\emph{Case $\ell \neq 2$:} 
Fact \ref{fact:H2-presentation} implies that the map $\wedge^2(\H^1(\Gc^a)) \oplus \H^1(\Gc^a) \rightarrow \H^2(\Gc^a)$, given by $(x \wedge y) \oplus z \mapsto x \cup y + \beta z$, is an isomorphism.
In particular, any element $\eta \in \H^2(\Gc^a)$ can be represented as
\[ \eta = \left(\sum_i x_i \cup y_i\right) + \beta z \]
for some $x_i,y_i,z \in \H^1(\Gc^a)$, with $z$ uniquely determined by $\eta$.
Also, note that $\eta \in \H^2(\Gc^a)_{\rm dec}$ if and only if $z = 0$.

Next, recall that $\Gc^{(3)}$ is (topologically) generated by $\Gc^{[3]}$ and $\Gc^\ell$.
Thus, the image of the injection $(\Gc^{(2)}/\Gc^{(3)})^\vee \hookrightarrow \H^1(\Gc^{(2)})^\Gc$ consists precisely of the elements 
\[ f \in \H^1(\Gc^{(2)})^\Gc = \Hom_\Gc(\Gc^{(2)},\Z/\ell) = (\Gc^{(2)}/\Gc^{[3]})^\vee \]
such that $f(\tilde\sigma^\ell) = 0$ for all $\tilde\sigma \in \Gc$.

Now let $f \in \H^1(\Gc^{(2)})^\Gc$ be given.
By the observation above using Fact \ref{fact:H2-presentation}, there exists a representation of $d_2 f$ of the form 
\[ d_2 f = \left(\sum_i x_i \cup y_i\right) + \beta z \]
for some $x_i,y_i,z \in \H^1(\Gc^a) = \Hom(\Gc^a,\Z/\ell)$, with $z$ uniquely determined by $d_2 f$.
Let $\tilde\sigma \in \Gc$ be given with image $\sigma \in \Gc^a$.
Then \cite{Neukirch2008} Proposition 3.9.14 (see also \cite{2013arXiv1310.5613T} Theorem 2) implies that $-f(\tilde\sigma^\ell) = z(\sigma)$.
Thus, the description above shows that $f$ is in the image of $(\Gc^{(2)}/\Gc^{(3)})^\vee \hookrightarrow \H^1(\Gc^{(2)})^\Gc$ if and only if $z(\sigma) = 0$ for all $\sigma \in \Gc^a$, i.e. $z = 0$ in $\H^1(\Gc^a)$.
As discussed above, this is further equivalent to $d_2 f \in \H^2(\Gc^a)_{\rm dec}$, as required.
\end{proof}

We now recall the well-known duality between the map $[\bullet,\bullet] : \widehat\wedge^2(\Gc^a) \rightarrow \Gc^{(2)}/\Gc^{(3)}$ and cup-products in mod-$\ell$ cohomology.
By Fact \ref{fact:H2-presentation}, we note that the cup product induces a canonical isomorphism
\[ \wedge^2(\H^1(\Gc^a)) \xrightarrow{\cong} \H^2(\Gc^a)_{\rm dec}/(\beta\H^1(\Gc^a) \cap \H^2(\Gc^a)_{\rm dec}). \]
We will henceforth identify these two groups via this isomorphism.
Next, note that the map $d_2 : \H^1(\Gc^{(2)})^\Gc \rightarrow \H^2(\Gc^a)$ restricts to a map $d_2 : (\Gc^{(2)}/\Gc^{(3)})^\vee \rightarrow \H^2(\Gc^a)_{\rm dec}$ by Lemma \ref{lemma:pre-image-of-H2-dec}. 
Therefore, we obtain an induced map 
\[ \Upsilon : (\Gc^{(2)}/\Gc^{(3)})^\vee \xrightarrow{-d_2} \H^2(\Gc^a)_{\rm dec} \twoheadrightarrow \wedge^2(\H^1(\Gc^a)) \]
where the surjective map on the right is induced by the identification we made above.
Note that we have replaced $d_2$ with $-d_2$ in the definition of $\Upsilon$, in order to account for the negative sign that appears in the results referenced below.
With this notation, the following fact follows immediately from \cite{Neukirch2008} Proposition 3.9.13 and/or \cite{2013arXiv1310.5613T} Theorem 2(1).
\begin{fact}
\label{fact: cup dual to commutator}
	The map $\Upsilon : (\Gc^{(2)}/\Gc^{(3)})^\vee \rightarrow \wedge^2(\H^1(\Gc^a))$ defined above is precisely the dual of the commutator map $[\bullet,\bullet] : \widehat\wedge^2(\Gc^a) \rightarrow \Gc^{(2)}/\Gc^{(3)}$.
\end{fact}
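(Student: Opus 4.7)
The plan is to verify the identity of two bilinear forms $\Gc^a \times \Gc^a \to \Z/\ell$: the one obtained by evaluating $\Upsilon f$ against $\sigma \wedge \tau$ via the cup-product pairing, and the one $\sigma \wedge \tau \mapsto f([\sigma,\tau])$ obtained from dualizing the commutator. Both are computed explicitly using inhomogeneous cocycles and a set-theoretic section of $\Gc \twoheadrightarrow \Gc^a$. The key ingredients are the bar-resolution formula for the cup product and the bar-resolution description of the transgression in the Lyndon–Hochschild–Serre spectral sequence.

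First I would set up the pairing. For $x,y \in \H^1(\Gc^a) = \Hom(\Gc^a,\Z/\ell)$, the cup product $x \cup y$ is represented on the bar resolution by the inhomogeneous $2$-cocycle $(\sigma,\tau) \mapsto x(\sigma)y(\tau)$. Consequently any decomposable class $\alpha = \sum x_i \cup y_i \in \H^2(\Gc^a)_{\rm dec}$ is represented by $c(\sigma,\tau) = \sum_i x_i(\sigma) y_i(\tau)$, and under the identification $\H^2(\Gc^a)_{\rm dec}/\bigl(\beta\H^1(\Gc^a)\cap\H^2(\Gc^a)_{\rm dec}\bigr) \cong \wedge^2(\H^1(\Gc^a))$ from Fact \ref{fact:H2-presentation}, the pairing with $\sigma \wedge \tau \in \widehat\wedge^2(\Gc^a)$ becomes the skew-symmetrization
$$\langle \alpha, \sigma \wedge \tau\rangle \;=\; c(\sigma,\tau) - c(\tau,\sigma).$$

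Next I would compute $d_2 f$ for $f \in (\Gc^{(2)}/\Gc^{(3)})^\vee$. Choose a set-theoretic section $s : \Gc^a \to \Gc$ of the projection. The standard recipe (lift $f$ to a $1$-cochain on $\Gc$ via $s$ and take the coboundary) gives the explicit $2$-cocycle representative
$$c_f(\sigma,\tau) \;=\; f\bigl(s(\sigma)\,s(\tau)\,s(\sigma\tau)^{-1}\bigr)$$
of $d_2 f$, well-defined because $s(\sigma)s(\tau)s(\sigma\tau)^{-1} \in \Gc^{(2)}$ and because $f$ vanishes on $\Gc^{(3)}$. That $d_2 f$ lands in $\H^2(\Gc^a)_{\rm dec}$ is exactly Lemma \ref{lemma:pre-image-of-H2-dec}. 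Now use that $\Gc^{(2)}/\Gc^{(3)}$ is central in $\Gc^c$ to rearrange modulo $\Gc^{(3)}$:
$$c_f(\sigma,\tau) - c_f(\tau,\sigma) \;=\; f\bigl(s(\sigma)s(\tau)s(\sigma)^{-1}s(\tau)^{-1}\bigr) \pmod{\Gc^{(3)}},$$
since the $s(\sigma\tau)^{-1}$ contributions cancel in the centre. Combining the paper's convention $[\sigma,\tau] = \tilde\sigma^{-1}\tilde\tau^{-1}\tilde\sigma\tilde\tau$ with the $\Z/\ell$-bilinearity of $[\bullet,\bullet]$ identifies $s(\sigma)s(\tau)s(\sigma)^{-1}s(\tau)^{-1}$ with $\pm[\sigma,\tau]$ modulo $\Gc^{(3)}$; putting the pieces together and using the minus sign in the definition $\Upsilon = -d_2$ composed with the quotient, one obtains $\langle \Upsilon f, \sigma \wedge \tau\rangle = f([\sigma,\tau])$, which is precisely the evaluation of the Pontryagin dual of $[\bullet,\bullet] : \widehat\wedge^2(\Gc^a) \to \Gc^{(2)}/\Gc^{(3)}$ at $(f,\,\sigma \wedge \tau)$.

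The main obstacle will be sign-tracking rather than any deep ingredient: the paper's commutator convention $\tilde\sigma^{-1}\tilde\tau^{-1}\tilde\sigma\tilde\tau$ differs from the naïve commutator $\tilde\sigma\tilde\tau\tilde\sigma^{-1}\tilde\tau^{-1}$, and different references define the transgression $d_2$ with opposite signs; these signs are exactly what motivates writing $\Upsilon = -d_2$ followed by the projection to $\wedge^2(\H^1(\Gc^a))$. Conceptually, everything reduces to the familiar slogan that the ``alternating part'' of a $2$-cocycle for a central extension $0 \to Z \to E \to \Gc^a \to 0$ computes the induced commutator pairing $\wedge^2 \Gc^a \to Z$; applied to the pushforward of $\Gc^{(2)}/\Gc^{(3)} \to \Gc^c \to \Gc^a$ along $f$, this yields exactly the claimed duality.
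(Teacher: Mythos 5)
Your argument is correct in substance but takes a genuinely different route from the paper's. The paper proves this Fact by citation: it simply invokes \cite{Neukirch2008} Proposition 3.9.13 (and/or \cite{2013arXiv1310.5613T} Theorem 2(1)), after having set up $\Upsilon = -d_2$ precisely so that the sign in that reference is absorbed. You instead re-derive the content of that citation from scratch via explicit inhomogeneous cocycles: representing decomposable classes by $(\sigma,\tau)\mapsto\sum x_i(\sigma)y_i(\tau)$, writing the transgression of $f$ as the pushforward factor set $c_f(\sigma,\tau)=f(s(\sigma)s(\tau)s(\sigma\tau)^{-1})$, and computing the skew-symmetrization. That computation is the real content of the cited references, so writing it out buys a self-contained argument at the cost of doing the sign bookkeeping yourself.

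On that bookkeeping, two remarks. First, your hedge ``$\pm[\sigma,\tau]$'' is needlessly weak: $s(\sigma)s(\tau)s(\sigma)^{-1}s(\tau)^{-1}$ equals, in the paper's bracket convention $[a,b]=a^{-1}b^{-1}ab$, the element $[s(\sigma)^{-1},s(\tau)^{-1}]$, hence $[\sigma^{-1},\tau^{-1}]=[\sigma,\tau]$ by $\Z/\ell$-bilinearity --- there is no ambiguity there. Second, the sign you actually need to pin down is the one in the transgression formula: you wrote that the standard recipe gives $c_f$ as a representative of $d_2 f$, which, combined with your skew-symmetrization $c_f(\sigma,\tau)-c_f(\tau,\sigma)=f([\sigma,\tau])$ and the paper's $\Upsilon=-d_2$, would yield $\langle\Upsilon f,\sigma\wedge\tau\rangle=-f([\sigma,\tau])$, not $+f([\sigma,\tau])$. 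The resolution is that NSW's transgression is the \emph{negative} of the class of the factor set (i.e.\ $d_2 f=-[c_f]$); that is exactly the sign the paper's ``$\Upsilon=-d_2$'' is designed to cancel. So you should either carry that minus sign through explicitly or rely on the reference for it; as written, the proposal is morally correct but leaves the decisive sign unresolved while acknowledging the issue exists.
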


\subsection{Galois cohomology and Kummer theory}
\label{sub:Kummer-theory}

Suppose that $K$ is a field such that $\Char K \neq \ell$ and $\mu_{\ell^2} \subset K$.
Recall that $K(\ell)$ denotes the maximal pro-$\ell$ Galois extension of $K$ and that $\Gc_K := \Gal(K(\ell)|K)$ denotes the maximal pro-$\ell$ Galois group of $K$.
Furthermore, recall that Kummer theory yields a canonical perfect pairing:
\[ \Gc_K^a \times \k_1(K) \rightarrow \mu_\ell, \]
which is defined by $(\sigma,x) \mapsto \sigma \sqrt[\ell]{x}/\sqrt[\ell]{x}$.

Choose a primitive $\ell$-th root of unity $\omega \in \mu_\ell \subset K$, and consider the induced isomorphism $\mu_\ell \cong \Z/\ell$ given by $\omega^i \mapsto i$.
We therefore obtain an induced perfect pairing:
\[ (\bullet,\bullet)_\omega : \Gc_K^a \times \k_1(K) \rightarrow \Z/\ell \]
where $(\sigma,x)_\omega = i$ if and only if $\sigma\sqrt[\ell]{x}/\sqrt[\ell]{x} = \omega^i$.
In particular, the map $\k_1(K) \rightarrow \H^1(\Gc_K^a)$, defined by $x \mapsto (\bullet,x)_\omega \in \H^1(\Gc_K^a)$, is an isomorphism of discrete $\Z/\ell$-modules.

Recall that the Merkurjev-Suslin theorem \cite{Merkurjev1982} states that the canonical map $\k_2(K) \rightarrow \H^2(K,\mu_\ell^{\otimes 2})$ is an isomorphism.
In particular, the cup-product map $\H^1(K,\mu_\ell) \otimes \H^1(K,\mu_\ell) \rightarrow \H^2(K,\mu_\ell^{\otimes 2})$ is surjective.
Our choice of isomorphism $\mu_\ell \cong \Z/\ell$ given by $\omega$ induces a compatible isomorphism $\mu_\ell^{\otimes 2} \cong \Z/\ell$.
In particular, the cup-product map $\H^1(G_K) \otimes \H^1(G_K) \rightarrow \H^2(G_K)$ is surjective as well.
On the other hand, the inflation map $\H^1(\Gc_K^a) \rightarrow \H^1(G_K)$ is an isomorphism.
From this, it follows that the inflation map $\H^2(\Gc_K^a) \rightarrow \H^2(G_K)$ is surjective.
Hence the inflation map $\H^2(\Gc_K) \rightarrow \H^2(G_K)$ is surjective as well.
This observation allows us to work with $\Gc_K$ instead of $G_K$ via the following fact.
See also \cite{Chebolu2009} Remark 8.2.

\begin{fact}
\label{fact:H2-Galois-isom}
In the context above, the inflation map $\H^2(\Gc_K) \rightarrow \H^2(G_K)$ is an isomorphism.
\end{fact}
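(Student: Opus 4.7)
The plan is to use the Lyndon-Hochschild-Serre spectral sequence to reduce injectivity of $\inf$ to a vanishing statement, since the surjectivity has already been established in the paragraph preceding the fact (via Merkurjev-Suslin together with the isomorphism $\H^1(\Gc_K^a) \to \H^1(G_K)$). Concretely, I would apply LHS to the short exact sequence
\[ 1 \longrightarrow N \longrightarrow G_K \longrightarrow \Gc_K \longrightarrow 1, \]
where $N := \Gal(\overline{K}|K(\ell))$. The associated five-term exact sequence with $\Z/\ell$-coefficients reads
\[ 0 \to \H^1(\Gc_K) \xrightarrow{\inf} \H^1(G_K) \xrightarrow{\res} \H^1(N,\Z/\ell)^{\Gc_K} \xrightarrow{d_2} \H^2(\Gc_K) \xrightarrow{\inf} \H^2(G_K), \]
so injectivity of $\inf$ in degree $2$ reduces to the vanishing of $\H^1(N,\Z/\ell)^{\Gc_K}$.

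The main step is this vanishing, which is essentially a tautology from the defining property of $K(\ell)$. A $\Gc_K$-invariant continuous homomorphism $f : N \to \Z/\ell$ (with $\Gc_K$ acting via outer conjugation on $N$ and trivially on $\Z/\ell$) has kernel $N' := \ker f$ which is stable under conjugation by $G_K$, hence normal in $G_K$. If $f$ were nonzero, then the fixed field $M := \overline{K}^{N'}$ would be Galois over $K$, with $\Gal(M|K)$ sitting in a central extension $1 \to \Z/\ell \to \Gal(M|K) \to \Gc_K \to 1$; in particular, $\Gal(M|K)$ would be pro-$\ell$. This forces $M \subseteq K(\ell)$, contradicting $[M:K(\ell)] = \ell$. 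Hence $f = 0$, yielding the required vanishing and thus the injectivity.

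I do not anticipate any real obstacle here: the only subtlety worth flagging is that one must work with the $\Gc_K$-invariants of $\H^1(N,\Z/\ell)$ rather than all of $\H^1(N,\Z/\ell)$, since only invariant homomorphisms produce subgroups of $N$ that are normal in $G_K$ and hence correspond to Galois sub-extensions of $\overline{K}|K$. Combined with the surjectivity already observed, this completes the isomorphism assertion.
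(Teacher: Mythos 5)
Your proposal is correct and follows the same top-level structure as the paper's proof (the five-term exact sequence from LHS applied to $1 \to G_{K(\ell)} \to G_K \to \Gc_K \to 1$, so that injectivity of $\inf$ in degree $2$ reduces to killing the image of $d_2$), but you establish the crucial vanishing by a different argument. The paper simply observes that $K(\ell)^\times$ is $\ell$-divisible, so by Kummer theory $\H^1(G_{K(\ell)},\Z/\ell) \cong \k_1(K(\ell)) = 0$ \emph{in full}, making the invariants trivially vanish. You instead prove only the weaker statement $\H^1(G_{K(\ell)},\Z/\ell)^{\Gc_K} = 0$, and do so by a purely group-theoretic route: a nonzero $\Gc_K$-invariant homomorphism $f : G_{K(\ell)} \to \Z/\ell$ has kernel $N'$ normal in $G_K$, and $G_K/N'$ is an extension of the pro-$\ell$ group $\Gc_K$ by $\Z/\ell$, hence pro-$\ell$, which forces the fixed field of $N'$ into $K(\ell)$ and contradicts $f \neq 0$. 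Both routes are valid and roughly equally short; the paper's version is arguably more in the spirit of the Kummer-theoretic identifications used throughout the section (and gives the stronger conclusion that $\H^1(G_{K(\ell)})$ vanishes entirely), while yours avoids Kummer theory and appeals directly to the maximality property defining $K(\ell)$.
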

\begin{proof}
Surjectivity of this map was discussed above.
Since $K(\ell)^\times$ is $\ell$-divisible, it follows that $\H^1(G_{K(\ell)}) \cong \H^1(K(\ell),\mu_\ell) = \k_1(K(\ell)) = 0$.
Thus the map $\H^2(\Gc_K) \rightarrow \H^2(G_K)$ is injective since its kernel is the image of $d_2 : \H^1(G_{K(\ell)})^{\Gc_K} \rightarrow \H^2(\Gc_K)$.
\end{proof}

For the rest of the section, we will (canonically) identify $\H^2(\Gc_K)$ and $\H^2(G_K)$ using Fact \ref{fact:H2-Galois-isom}.
Next we discuss the Bockstein morphism in Galois cohomology.
The following lemma shows the triviality of the Bockstein morphism in the presence of sufficiently many roots of unity.
It is possible to deduce the following lemma using the description of the Bockstein morphism in Galois cohomology as a cup-product; see for example \cite{Efrat2011b} Proposition 2.6 and/or \cite{2013arXiv1310.5613T} Lemma 4.1.
However, the elegant proof that we give below was graciously suggested to us by the referee.

\begin{lemma}
\label{lemma:bockstein-is-trivial-galois}
Let $K$ be a field of characteristic $\neq \ell$ such that $\mu_{\ell^2} \subset K$.
Then the Bockstein morphism $\beta : \H^1(\Gc_K) \rightarrow \H^2(\Gc_K)$ is trivial.
\end{lemma}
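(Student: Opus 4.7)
The plan is to reduce the Bockstein map for $\Gc_K$ to a connecting map in Kummer theory, which will be visibly zero under the hypothesis $\mu_{\ell^2} \subset K$. First, by Fact \ref{fact:H2-Galois-isom} together with the fact that inflation $\H^1(\Gc_K^a) \to \H^1(\Gc_K) \to \H^1(G_K)$ is an isomorphism and that inflation commutes with connecting homomorphisms (in particular with the Bockstein), it suffices to prove that the Bockstein $\beta : \H^1(G_K) \to \H^2(G_K)$ in Galois cohomology vanishes.

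Next, I would exploit the hypothesis $\mu_{\ell^2} \subset K$ to reinterpret the coefficient short exact sequence. A choice of primitive $\ell^2$-th root of unity $\omega' \in \mu_{\ell^2} \subset K$ (with $(\omega')^\ell = \omega$) induces $G_K$-equivariant isomorphisms $\Z/\ell \cong \mu_\ell$ and $\Z/\ell^2 \cong \mu_{\ell^2}$, where $G_K$ acts trivially on both Kummer modules precisely because $\mu_{\ell^2}$ lies in $K$. Under this identification, the defining short exact sequence $0 \to \Z/\ell \to \Z/\ell^2 \to \Z/\ell \to 0$ of the Bockstein corresponds to the $\ell$-th power sequence $0 \to \mu_\ell \to \mu_{\ell^2} \to \mu_\ell \to 0$ of Galois modules. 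Consequently, $\beta$ is identified with the connecting homomorphism $\delta : \H^1(G_K,\mu_\ell) \to \H^2(G_K,\mu_\ell)$ for this Kummer-type sequence.

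Finally, I would show $\delta = 0$ by showing that the preceding map in the long exact sequence is surjective. That preceding map is $\H^1(G_K,\mu_{\ell^2}) \to \H^1(G_K,\mu_\ell)$, induced by the $\ell$-th power surjection $\mu_{\ell^2} \twoheadrightarrow \mu_\ell$. By Kummer theory (the hypothesis $\Char K \neq \ell$ and $\mu_{\ell^n}\subset K$ for $n=1,2$), these cohomology groups are canonically identified with $K^\times/K^{\times\ell^2}$ and $K^\times/K^{\times\ell}$ respectively, and by naturality of Kummer theory the induced map is just the tautological quotient $K^\times/K^{\times\ell^2} \twoheadrightarrow K^\times/K^{\times\ell}$, which is manifestly surjective. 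Hence $\delta = 0$, and thus $\beta = 0$ on $\H^1(G_K)$, which by the initial reduction gives $\beta = 0$ on $\H^1(\Gc_K)$.

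There is no serious obstacle here; the main content is simply the standard observation that the Bockstein ``is'' the Kummer connecting map once enough roots of unity are present, and naturality of Kummer theory then makes the vanishing obvious. The only care needed is to verify that the comparison isomorphisms $\Z/\ell^n \cong \mu_{\ell^n}$ ($n=1,2$) are compatible, i.e.\ that $\omega'$ maps to $\omega$ under the $\ell$-th power map, so that the identification of short exact sequences is honest; this is why the choice of $\omega'$ with $(\omega')^\ell = \omega$ is made explicitly.
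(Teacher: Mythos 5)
Your argument is correct and follows essentially the same route as the paper's: both exploit the compatible trivializations $\mu_{\ell^n}\cong\Z/\ell^n$ afforded by $\mu_{\ell^2}\subset K$ to reinterpret the Bockstein coefficient sequence as the Kummer sequence $0\to\mu_\ell\to\mu_{\ell^2}\to\mu_\ell\to0$, and then observe by Kummer theory that $\H^1(-,\Z/\ell^2)\to\H^1(-,\Z/\ell)$ is the tautological surjection $K^\times/\ell^2\twoheadrightarrow K^\times/\ell$, which forces the connecting (Bockstein) map to vanish. The only cosmetic difference is that you first pass through $G_K$ via Fact \ref{fact:H2-Galois-isom} before descending to $\Gc_K$, whereas the paper carries out the identical Kummer-theoretic computation directly at the level of $\Gc_K$.
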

\begin{proof}
By choosing compatible isomorphisms $\mu_\ell \cong \Z/\ell$ and $\mu_{\ell^2} \cong \Z/\ell^2$ of $G_K$-modules, Kummer theory shows that the map 
\[ \H^1(\Gc_K,\Z/\ell^2) \rightarrow \H^1(\Gc_K,\Z/\ell) \]
is surjective as it corresponds to the projection $K^\times/\ell^2 \rightarrow K^\times/\ell$.
The mere definition of the Bockstein morphism $\beta : \H^1(\Gc_K) \rightarrow \H^1(\Gc_K)$ as a connecting morphism shows that the image of the (surjective) map $\H^1(\Gc_K,\Z/\ell^2) \rightarrow \H^1(\Gc_K)$ is the kernel of $\beta$.
\end{proof}

Recall that the cup product induces a canonical isomorphism, as noted above
\[ \wedge^2(\H^1(\Gc_K^a)) = \H^2(\Gc_K^a)_{\rm dec}/(\beta\H^1(\Gc_K^a) \cap \H^2(\Gc_K^a)_{\rm dec}). \]
Therefore, if $\mu_{\ell^2} \subset K$, we deduce from Lemma \ref{lemma:bockstein-is-trivial-galois} that the inflation map $\H^2(\Gc_K^a)_{\rm dec} \rightarrow \H^2(\Gc_K)$ factors through $\wedge^2(\H^1(\Gc_K^a))$ and that the induced map $\wedge^2(\H^1(\Gc_K^a)) \rightarrow \H^2(\Gc_K)$ is the inflation composed with the cup-product:
\[ \wedge^2(\H^1(\Gc_K^a)) \xrightarrow{\inf} \wedge^2(\H^1(\Gc_K)) \xrightarrow{\cup} \H^2(\Gc_K). \]

We note that the perfect pairing $(\bullet,\bullet)_\omega : \Gc_K^a \times \k_1(K) \rightarrow \Z/\ell$ induces a perfect pairing on wedge-products:
\[ (\bullet,\bullet)_\omega : \widehat\wedge^2(\Gc_K^a) \times \wedge^2(\k_1(K)) \rightarrow \Z/\ell. \]
On the one hand, note that the commutator $[\bullet,\bullet]$ defined above yields a canonical map $[\bullet,\bullet] : \widehat\wedge^2(\Gc_K^a) \rightarrow \Gc_K^{(2)}/\Gc_K^{(3)} =: \Zf_K$, whose image is denoted by $\Zf_K^0$, as defined in \S\ref{subsec:mod-ell-analogue}.
On the other hand, the product-map on mod-$\ell$ Milnor K-theory yields a surjective map $\{\bullet,\bullet\} : \wedge^2(\k_1(K)) \rightarrow \k_2(K)$, since $-1 \in K^{\times \ell}$ and thus $\{x,x\} = \{x,-1\} = 0$ for all $x \in \k_1(K)$ (cf. \cite{Gille2006} Lemma 7.1.2).
The following theorem relates these two maps.
\begin{theorem}
\label{thm:gal-to-milnor}
Let $K$ be a field of characteristic $\neq \ell$ such that $\mu_{\ell^2} \subset K$.
Let $\omega$ be a primitive $\ell$-th root of unity in $K$.
Let $R$ denote the kernel of the map $[\bullet,\bullet] : \widehat\wedge^2(\Gc_K^a) \rightarrow \Gc_K^{(2)}/\Gc_K^{(3)}$.
Then the inclusion $R \hookrightarrow \widehat\wedge^2(\Gc_K^a)$ is $(\bullet,\bullet)_\omega$-dual to the (surjective) product map $\wedge^2(\k_1(K)) \rightarrow \k_2(K)$.
\end{theorem}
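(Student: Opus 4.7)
The plan is to reduce the statement to the duality of Fact \ref{fact: cup dual to commutator}, and then to identify the image of the map $\Upsilon$ constructed in \S\ref{sub:zassen-and-cup-products} with the kernel of the Milnor product via a cohomological diagram chase. First I would reformulate the claim: under the perfect pairing $(\bullet,\bullet)_\omega : \widehat\wedge^2(\Gc_K^a) \times \wedge^2(\k_1(K)) \to \Z/\ell$, asserting that the inclusion $R \hookrightarrow \widehat\wedge^2(\Gc_K^a)$ is dual to the surjection $\wedge^2(\k_1(K)) \twoheadrightarrow \k_2(K)$ is equivalent to the equality of annihilators
\[ R^\perp = \ker\bigl(\wedge^2(\k_1(K)) \to \k_2(K)\bigr) \]
inside $\wedge^2(\k_1(K))$. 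Since $[\bullet,\bullet]$ has kernel $R$ and image $\Zf_K^0$ by definition, this is a purely formal reformulation.

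Next, I would apply Fact \ref{fact: cup dual to commutator} directly: it tells us that the map $\Upsilon : (\Zf_K)^\vee \to \wedge^2(\H^1(\Gc_K^a))$ is the Pontryagin-dual of $[\bullet,\bullet] : \widehat\wedge^2(\Gc_K^a) \to \Zf_K$. Using the Kummer isomorphism $\H^1(\Gc_K^a) \cong \k_1(K)$ induced by $\omega$, this identifies $\mathrm{Im}(\Upsilon)$ with $R^\perp \subset \wedge^2(\k_1(K))$. Thus it remains to show that $\mathrm{Im}(\Upsilon)$ coincides with $\ker(\wedge^2(\k_1(K)) \to \k_2(K))$.

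To compute $\mathrm{Im}(\Upsilon)$ cohomologically, I would combine three ingredients. By the Lyndon–Hochschild–Serre sequence (\ref{align:LHS}), the image of $d_2$ is $\ker(\inf : \H^2(\Gc_K^a) \to \H^2(\Gc_K))$; by Lemma \ref{lemma:pre-image-of-H2-dec}, restricting $d_2$ to $(\Zf_K)^\vee$ yields exactly $\ker(\inf) \cap \H^2(\Gc_K^a)_{\rm dec}$; and by Lemma \ref{lemma:bockstein-is-trivial-galois} the Bockstein contributions vanish after inflation, so the inflation factors as
\[ \H^2(\Gc_K^a)_{\rm dec} \twoheadrightarrow \wedge^2(\H^1(\Gc_K^a)) \xrightarrow{\overline{\inf}} \H^2(\Gc_K). \]
Projecting through this quotient, one reads off that $\mathrm{Im}(\Upsilon)$ equals the kernel of $\overline{\inf}$ inside $\wedge^2(\H^1(\Gc_K^a))$.

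Finally, I would close the identification by combining Fact \ref{fact:H2-Galois-isom} (the inflation $\H^2(\Gc_K) \to \H^2(G_K)$ is an isomorphism) with the Merkurjev–Suslin Theorem (the norm-residue isomorphism $\k_2(K) \xrightarrow{\cong} \H^2(G_K, \mu_\ell^{\otimes 2})$), using the compatible isomorphism $\mu_\ell^{\otimes 2} \cong \Z/\ell$ coming from $\omega$. Under these identifications, $\overline{\inf}$ becomes the Milnor product map $\wedge^2(\k_1(K)) \to \k_2(K)$ by construction of the norm-residue symbol. This yields $R^\perp = \mathrm{Im}(\Upsilon) = \ker(\wedge^2(\k_1(K)) \to \k_2(K))$, completing the proof. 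The main obstacle is purely bookkeeping — checking that Kummer theory, inflation, the cup-product, and the Merkurjev–Suslin symbol all fit into a single commutative diagram; no genuinely new computation is needed beyond what has already been assembled in Sections \ref{sub:presentation-of-H2}–\ref{sub:Kummer-theory}.
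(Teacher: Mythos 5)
Your proof is correct and follows essentially the same path as the paper: both arguments reduce the statement to identifying a kernel via Fact \ref{fact: cup dual to commutator}, Lemma \ref{lemma:pre-image-of-H2-dec}, the Lyndon--Hochschild--Serre sequence (\ref{align:LHS}), Lemma \ref{lemma:bockstein-is-trivial-galois}, and the Merkurjev--Suslin theorem together with Fact \ref{fact:H2-Galois-isom}. The only difference is organizational -- you phrase the conclusion as an equality of annihilators $R^\perp = \ker(\wedge^2(\k_1(K)) \to \k_2(K))$, whereas the paper writes out the exact sequence (\ref{align:cohom-ses}) and takes its $\Z/\ell$-dual -- but this is the same duality argument expressed in kernel/image language rather than in exact-sequence language.
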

\begin{proof}
Note that the Merkurjev-Suslin Theorem \cite{Merkurjev1982} implies that the map $\cup \circ \inf : \H^1(\Gc_K^a) \otimes \H^1(\Gc_K^a) \rightarrow \H^2(\Gc_K)$ is surjective (see the discussion for Fact \ref{fact:H2-Galois-isom}).
This observation, combined with Lemma \ref{lemma:pre-image-of-H2-dec}, yields the following commutative diagram with exact rows:
\[\xymatrix{
0 \ar[r] & (\Gc_K^{(2)}/\Gc_K^{(3)})^\vee \ar[r]^{-d_2} \ar@{^(->}[d] & \H^2(\Gc_K^a)_{\rm dec} \ar[r]^{\inf} \ar@{^(->}[d] & \H^2(\Gc_K) \ar@{=}[d] \ar[r] & 0 \\
0 \ar[r] & \H^1(\Gc_K^{(2)})^{\Gc_K} \ar[r]_{-d_2} & \H^2(\Gc_K^a) \ar[r]_{\inf} & \H^2(\Gc_K) \ar[r] & 0 \\
}\]
Note that we have replaced $d_2$ with $-d_2$ in the diagram above in order to account for the negative sign that appears in the definition of $\Upsilon$ above.
As discussed above, we will identify $\H^2(\Gc_K^a)_{\rm dec}/(\beta \H^1(\Gc_K^a) \cap \H^2(\Gc_K^a)_{\rm dec}) = \wedge^2(\H^1(\Gc_K^a))$ using Fact \ref{fact:H2-presentation}, and we will consider the map induced by $-d_2$:
\[ \Upsilon : (\Gc_K^{(2)}/\Gc_K^{(3)})^\vee \xrightarrow{-d_2} \H^2(\Gc_K^a)_{\rm dec} \twoheadrightarrow \wedge^2(\H^1(\Gc_K^a)). \]

Since the map $\H^2(\Gc_K^a)_{\rm dec} \rightarrow \H^2(\Gc_K)$ factors through $\H^2(\Gc_K^a)_{\rm dec}/(\beta \H^1(\Gc_K^a) \cap \H^2(\Gc_K^a)_{\rm dec}) = \wedge^2(\H^1(\Gc_K^a))$ (see the discussion following Lemma \ref{lemma:bockstein-is-trivial-galois}),  we obtain an induced exact sequence:
\begin{align}
\label{align:cohom-ses}
(\Gc_K^{(2)}/\Gc_K^{(3)})^\vee \xrightarrow{\Upsilon} \wedge^2(\H^1(\Gc_K^a)) \xrightarrow{\cup \circ \inf} \H^2(\Gc_K) \rightarrow 0.	
\end{align}
By Fact \ref{fact: cup dual to commutator}, we see that this map $\Upsilon : (\Gc_K^{(2)}/\Gc_K^{(3)})^\vee \rightarrow \wedge^2(\H^1(\Gc_K^a))$ is precisely the dual of $[\bullet,\bullet] : \widehat\wedge^2(\Gc_K^a) \rightarrow \Gc_K^{(2)}/\Gc_K^{(3)}$.
In particular, the $\Z/\ell$-dual of (\ref{align:cohom-ses}) is none other than 
\[ 0 \rightarrow R \rightarrow \widehat\wedge^2(\Gc_K^a) \xrightarrow{[\bullet,\bullet]} \Gc_K^{(2)}/\Gc_K^{(3)}. \]
Thus, we have a canonical isomorphism $R^\vee \cong \H^2(\Gc_K)$ which is compatible with the corresponding maps from $\wedge^2(\H^1(\Gc_K^a))$ in the obvious sense.

To conclude the theorem, we simply recall that Kummer theory, along with our choice of $\omega$, induces an isomorphism $\k_1(K) \rightarrow \H^1(\Gc_K^a)$, which is given by $x \mapsto (\bullet,x)_\omega$.
We therefore obtain an induced isomorphism $\wedge^2(\k_1(K)) \rightarrow \wedge^2(\H^1(\Gc_K^a))$ which is exhibited by the fact that $\wedge^2(\k_1(K))$ is $(\bullet,\bullet)_\omega$-dual to $\widehat\wedge^2(\Gc_K^a)$.
Our choice of $\omega$ induces an isomorphism $\k_2(K) \rightarrow \H^2(\Gc_K)$ by the Merkurjev-Suslin theorem \cite{Merkurjev1982} and Fact \ref{fact:H2-Galois-isom}.
Furthermore, the multiplication maps are compatible in the sense that the following diagram is commutative:
\[
\xymatrix{
	\wedge^2(\k_1(K)) \ar[d]_{\cong} \ar@{->>}[r]^-{\{\bullet,\bullet\}} & \k_2(K) \ar[d]^{\cong} \\
	\wedge^2(\H^1(\Gc_K^a)) \ar@{->>}[r]_-{\cup \circ \inf} & \H^2(\Gc_K) \\
}
\]
Since $\wedge^2(\H^1(\Gc_K^a)) \twoheadrightarrow \H^2(\Gc_K)$ is dual to the inclusion $R \hookrightarrow \widehat\wedge^2(\Gc_K^a)$, we see that this inclusion $R \hookrightarrow \widehat\wedge^2(\Gc_K^a)$ is $(\bullet,\bullet)_\omega$-dual to the (surjective) multiplication map $\{\bullet,\bullet\} : \wedge^2(\k_1(K)) \twoheadrightarrow \k_2(K)$, as required.
\end{proof}

\section{Proof of Theorem \ref{thm:Main-Theorem-A}}
\label{sec:proof-of-theorem-galois}

We will use the notation of Theorem \ref{thm:Main-Theorem-A}.
Namely, $K|k$ and $L|l$ are function fields over algebraically closed fields such that $\Char k, \Char l \neq \ell$ and $\trdeg(K|k) \geq 5$.

\vskip 10pt
\noindent\emph{Proof of (1):}
Define $\kf_1 := \Hom(\Gc_K^a,\Z/\ell)$.
Let $R$ denote the kernel of $[\bullet,\bullet] : \widehat\wedge^2(\Gc_K^a) \rightarrow \Gc_K^{(2)}/\Gc_K^{(3)} = \Zf_K$ and define $\kf_2 := \Hom(R,\Z/\ell)$.
The dual of the (injective) map $R \hookrightarrow  \widehat\wedge^2(\Gc_K^a)$ is a surjective map
\[ \mu : \wedge^2(\kf_1) = \Hom(\widehat\wedge^2(\Gc_K^a),\Z/\ell) \twoheadrightarrow \Hom(R,\Z/\ell) = \kf_2. \]
Finally, for each quotient $\pi_H : \Gc_K^a \twoheadrightarrow \Gc_K^a/H$ in $\Rfrat(K|k)$, let $A_H$ denote the image of the dual map $\Hom(\Gc_K^a/H,\Z/\ell) \hookrightarrow \Hom(\Gc_K^a,\Z/\ell) = \kf_1$, and let $\Rfrat^\vee$ denote the collection $(A_H)_{\pi_H}$ of these subgroups of $\kf_1$, as $\pi_H$ varies over the elements of $\Rfrat(K|k)$.
To summarize, we have constructed the following data $\Kf$:
\begin{itemize}
	\item Two abelian groups $\kf_1$ and $\kf_2$. 
	\item A surjective homomorphism $\mu : \wedge^2(\kf_1) \twoheadrightarrow \kf_2$.
	\item A collection $\Rfrat^\vee$ of subgroups of $\kf_1$.
\end{itemize}

Let $\omega$ be a primitive $\ell$-th root of unity in $K$, which induces an isomorphism of $G_K$-modules $\mu_\ell \cong \Z/\ell$.
By Kummer theory, $\omega$ induces an isomorphism $\k_1(K) \cong \kf_1$.
Also, by Theorem \ref{thm:gal-to-milnor}, we see that $\omega$ induces an isomorphism $\k_2(K) \cong \kf_2$, and that the multiplication map $\k_1(K) \otimes \k_1(K) \twoheadrightarrow \k_2(K)$ corresponds to the composition $\kf_1 \otimes \kf_1 \twoheadrightarrow \wedge^2(\kf_1) \xrightarrow{\mu} \kf_2$ via these isomorphisms.
Finally, Kummer theory again shows that the isomorphism $\k_1(K) \cong \kf_1$ sends the elements of $\Gfor(K|k)$ bijectively onto $\Rfrat^\vee$.
By Theorem \ref{thm:Main-Theorem-A-milnor}(1), we know how to reconstruct $K^i|k$ from the data $\Kcalm(K|k)$, and thus the discussion above shows how to reconstruct $K^i|k$ from the data $\Kf$, hence from $\Gc_K^c$ together with $\Rfrat(K|k)$.

\vskip 10pt
\noindent\emph{Proof of (2):}
Assume that $\UIsom^c(\Gc_L^a,\Gc_K^a)$ is non-empty.
We first prove that $\Char L \neq \ell$.
Assume the contrary, and recall that this implies that $\Gc_L$ is a free pro-$\ell$ group.
Thus, one has $\H^2(\Gc_L) = 0$.
By (\ref{align:LHS}), Lemma \ref{lemma:pre-image-of-H2-dec} and Fact \ref{fact: cup dual to commutator}, this implies that the canonical map 
\[[\bullet,\bullet] : \widehat\wedge^2(\Gc_L^a) \rightarrow \Gc_L^{(2)}/\Gc_L^{(3)} \]
is \emph{injective}.
On the other hand, since $\UIsom^c(\Gc_L^a,\Gc_K^a)$ is non-empty, the same condition holds for $\Gc_K^a$.
Namely, the map $[\bullet,\bullet] : \widehat\wedge^2(\Gc_K^a) \rightarrow \Gc_K^{(2)}/\Gc_K^{(3)}$ is injective as well.
By Theorem \ref{thm:gal-to-milnor}, this implies that $\k_2(K) = 0$, which contradicts Lemma \ref{lemma:independent-coords}.
Hence $\Char L \neq \ell$.

To conclude, we must show that the canonical map 
\[ \Isom^i_F(K,L) \rightarrow \UIsom^c_{\rm rat}(\Gc_L^a,\Gc_K^a)\]
is a bijection.
Choose a primitive $\ell$-th root of unity $\omega_K$ in $K$ and a primitive $\ell$-th root of unity $\omega_L$ in $L$.
With these choices made and arguing as in the proof of (1) above, Theorem \ref{thm:gal-to-milnor} shows that the Kummer pairing yields a \emph{bijection}
\[\Psi_{\omega_K,\omega_L} : \Isomrat(\k_1(K),\k_1(L)) \rightarrow \Isom^c_{\rm rat}(\Gc_L^a,\Gc_K^a).\]
More precisely, for $\phi \in \Isomrat(\k_1(K),\k_1(L))$, the isomorphism $\Psi_{\omega_K,\omega_L}(\phi) : \Gc_L^a \rightarrow \Gc_K^a$ is defined explicitly by the condition that (using the notation of \S\ref{sub:Kummer-theory}):
\[ (\sigma,\phi(x))_{\omega_L} = (\Psi_{\omega_K,\omega_L}(\phi)(\sigma),x)_{\omega_K} \]
for all $\sigma \in \Gc_L^a$ and $x \in \k_1(K)$.
In the terminology of \S\ref{subsec:galois-variant} and \S\ref{subsec:milnor-variant}, it follows from Kummer theory that $\phi$ is compatible with $\Gfor$ if and only if $\Psi_{\omega_K,\omega_L}(\phi)$ is compatible with $\Rfrat$.
Also, in the terminology of \S\ref{subsec:mod-ell-analogue} and \S\ref{subsec:milnor-variant}, it follows from Theorem \ref{thm:gal-to-milnor} that $\phi$ is compatible with $\k_2$ if and only if $\Psi_{\omega_K,\omega_L}(\phi)$ is compatible with $[\bullet,\bullet]$.

Note that the map $\Psi_{\omega_K,\omega_L}$ depends on the choices of $\omega_K$ and $\omega_L$.
Nevertheless, changing $\omega_K$ and/or $\omega_L$ has the effect of replacing $\Psi_{\omega_K,\omega_L}$ by $\epsilon \cdot \Psi_{\omega_K,\omega_L}$ for some $\epsilon \in (\Z/\ell)^\times$.
Therefore, we obtain a \emph{canonical} bijection on $(\Z/\ell)^\times$-equivalence classes which is completely independent from choosing roots of unity:
\[ \UIsomm_{\rm rat}(\k_1(K),\k_1(L)) \xrightarrow{\cong} \UIsom^c_{\rm rat}(\Gc_L^a,\Gc_K^a).\]
Furthermore, by Kummer theory, this bijection is compatible with the canonical maps from $\Isom^i_F(K,L)$ in the sense that the following diagram commutes:
\[
\xymatrix{
{} & \UIsomm_{\rm rat}(\k_1(K),\k_1(L)) \ar[dd]^{\cong} \\
\Isom^i_F(K,L) \ar[ur]^{\rm canon.} \ar[dr]_{\rm canon.} \\
{} & \UIsom^c_{\rm rat}(\Gc_L^a,\Gc_K^a)
}\]
Thus, Theorem \ref{thm:Main-Theorem-A}(2) follows from Theorem \ref{thm:Main-Theorem-A-milnor}(2).

\begin{bibdiv}
\begin{biblist}

\bib{artin1957geometric}{misc}{
      author={Artin, E.},
       title={Geometric algebra},
   publisher={Interscience Publishers (New York)},
        date={1957},
}

\bib{Bogomolov1991}{incollection}{
      author={Bogomolov, F.~A.},
       title={{On two conjectures in birational algebraic geometry}},
        date={1991},
   booktitle={{Algebraic geometry and analytic geometry ({T}okyo, 1990)}},
      series={{ICM-90 Satell. Conf. Proc.}},
   publisher={Springer},
     address={Tokyo},
       pages={26\ndash 52},
      review={\MR{1260938 (94k:14013)}},
}

\bib{Bogomolov2007}{incollection}{
      author={Bogomolov, F.~A.},
      author={Tschinkel, Y.},
       title={{Commuting elements of {G}alois groups of function fields}},
        date={2002},
   booktitle={{Motives, polylogarithms and {H}odge theory, {P}art {I}
  ({I}rvine, {CA}, 1998)}},
      series={{Int. Press Lect. Ser.}},
      volume={3},
   publisher={Int. Press, Somerville, MA},
       pages={75\ndash 120},
}

\bib{Bogomolov2008a}{article}{
      author={Bogomolov, F.~A.},
      author={Tschinkel, Y.},
       title={{Reconstruction of function fields}},
        date={2008},
        ISSN={1016-443X},
     journal={Geom. Funct. Anal.},
      volume={18},
      number={2},
       pages={400\ndash 462},
         url={http://dx.doi.org/10.1007/s00039-008-0665-8},
      review={\MR{2421544 (2009g:11155)}},
}

\bib{MR2537087}{incollection}{
      author={Bogomolov, F.~A.},
      author={Tschinkel, Y.},
       title={Milnor {$K_2$} and field homomorphisms},
        date={2009},
   booktitle={Surveys in differential geometry. {V}ol. {XIII}. {G}eometry,
  analysis, and algebraic geometry: forty years of the {J}ournal of
  {D}ifferential {G}eometry},
      series={Surv. Differ. Geom.},
      volume={13},
   publisher={Int. Press, Somerville, MA},
       pages={223\ndash 244},
         url={http://dx.doi.org/10.4310/SDG.2008.v13.n1.a7},
      review={\MR{2537087 (2010j:19005)}},
}

\bib{Bogomolov2011}{article}{
      author={Bogomolov, F.~A.},
      author={Tschinkel, Y.},
       title={{Reconstruction of higher-dimensional function fields}},
        date={2011},
        ISSN={1609-3321},
     journal={Mosc. Math. J.},
      volume={11},
      number={2},
       pages={185\ndash 204, 406},
      review={\MR{2859233}},
}

\bib{zbMATH06092073}{incollection}{
      author={Bogomolov, F.~A.},
      author={Tschinkel, Y.},
       title={{Introduction to birational anabelian geometry.}},
    language={English},
        date={2012},
   booktitle={{Current developments in algebraic geometry. Selected papers
  based on the presentations at the workshop ``Classical algebraic geometry
  today'', MSRI, Berkeley, CA, USA, January 26--30, 2009}},
   publisher={Cambridge: Cambridge University Press},
       pages={17\ndash 63},
}

\bib{Chebolu2009}{article}{
      author={Chebolu, K.~S.},
      author={Efrat, I.},
      author={Min{\'a}\v{c}, J.},
       title={{Quotients of absolute {G}alois groups which determine the entire
  {G}alois cohomology}},
        date={2012-05},
     journal={Mathematische Annalen},
      volume={352},
      number={1},
       pages={205\ndash 221},
      eprint={http://www.arxiv.org/abs/0905.1364},
}

\bib{Evans1991}{article}{
      author={Evans, D.~M.},
      author={Hrushovski, E.},
       title={Projective planes in algebraically closed fields},
        date={1991},
        ISSN={0024-6115},
     journal={Proc. London Math. Soc. (3)},
      volume={62},
      number={1},
       pages={1\ndash 24},
         url={http://dx.doi.org/10.1112/plms/s3-62.1.1},
      review={\MR{1078211 (92a:05031)}},
}

\bib{Evans1995}{article}{
      author={Evans, D.~M.},
      author={Hrushovski, E.},
       title={The automorphism group of the combinatorial geometry of an
  algebraically closed field},
        date={1995},
        ISSN={0024-6107},
     journal={J. London Math. Soc. (2)},
      volume={52},
      number={2},
       pages={209\ndash 225},
         url={http://dx.doi.org/10.1112/jlms/52.2.209},
      review={\MR{1356137 (97f:51012)}},
}

\bib{Efrat2011b}{article}{
      author={Efrat, I.},
      author={Min{\'a}\v{c}, J.},
       title={{On the descending central sequence of aboslute Galois groups}},
        date={2011},
     journal={American Journal of Mathematics},
      volume={133},
      number={6},
       pages={1503\ndash 1532},
      eprint={http://www.arxiv.org/abs/0809.2166},
}

\bib{Efrat2011c}{article}{
      author={Efrat, I.},
      author={Min{\'a}\v{c}, J.},
       title={{{G}alois Groups and Cohomological Functors}},
        date={2015},
     journal={Trans. Amer. Math. Soc. (to appear)},
      eprint={http://arxiv.org/abs/1103.1508},
}

\bib{Engler2005}{book}{
      author={Engler, A.~J.},
      author={Prestel, A.},
       title={Valued fields},
      series={Springer Monographs in Mathematics},
   publisher={Springer-Verlag},
     address={Berlin},
        date={2005},
        ISBN={978-3-540-24221-5; 3-540-24221-X},
      review={\MR{2183496 (2007a:12005)}},
}

\bib{fried2006field}{book}{
      author={Fried, M.~D.},
      author={Jarden, M.},
       title={Field arithmetic},
     edition={third edition},
   publisher={Springer},
        date={2008},
      volume={11},
}

\bib{Gismatullin2008}{article}{
      author={Gismatullin, J.},
       title={Combinatorial geometries of field extensions},
        date={2008},
        ISSN={0024-6093},
     journal={Bull. Lond. Math. Soc.},
      volume={40},
      number={5},
       pages={789\ndash 800},
         url={http://dx.doi.org/10.1112/blms/bdn057},
      review={\MR{2439644 (2009h:03049)}},
}

\bib{Gille2006}{book}{
      author={Gille, P.},
      author={Szamuely, T.},
       title={Central simple algebras and {G}alois cohomology},
      series={Cambridge Studies in Advanced Mathematics},
   publisher={Cambridge University Press},
     address={Cambridge},
        date={2006},
      volume={101},
        ISBN={978-0-521-86103-8; 0-521-86103-9},
         url={http://dx.doi.org/10.1017/CBO9780511607219},
      review={\MR{2266528 (2007k:16033)}},
}

\bib{Labute1967}{article}{
      author={Labute, J.},
       title={{Classification of {D}emushkin groups}},
        date={1967},
        ISSN={0008-414X},
     journal={Canad. J. Math.},
      volume={19},
       pages={106\ndash 132},
      review={\MR{0210788 (35 \#1674)}},
}

\bib{lang1972introduction}{book}{
      author={Lang, S.},
       title={Introduction to algebraic geometry},
   publisher={Addison-Wesley Reading, Mass.},
        date={1972},
      volume={109},
}

\bib{Merkurjev1982}{article}{
      author={Merkurjev, A.~S.},
      author={Suslin, A.~A.},
       title={{{$K$}-cohomology of {S}everi-{B}rauer varieties and the norm
  residue homomorphism}},
        date={1982},
        ISSN={0373-2436},
     journal={Izv. Akad. Nauk SSSR Ser. Mat.},
      volume={46},
      number={5},
       pages={1011\ndash 1046, 1135\ndash 1136},
}

\bib{Neukirch2008}{book}{
      author={Neukirch, J.},
      author={Schmidt, A.},
      author={Wingberg, K.},
       title={{Cohomology of number fields}},
     edition={Second},
      series={{Grundlehren der Mathematischen Wissenschaften}},
   publisher={Springer-Verlag},
     address={Berlin},
        date={2008},
      volume={323},
        ISBN={978-3-540-37888-4},
      review={\MR{2392026 (2008m:11223)}},
}

\bib{Pop2010}{article}{
      author={Pop, F.},
       title={{Pro-{$\ell$} abelian-by-central {G}alois theory of prime
  divisors}},
        date={2010},
        ISSN={0021-2172},
     journal={Israel J. Math.},
      volume={180},
       pages={43\ndash 68},
         url={http://dx.doi.org/10.1007/s11856-010-0093-y},
      review={\MR{2735055 (2012a:12010)}},
}

\bib{Pop2011a}{incollection}{
      author={Pop, F.},
       title={{$\mathbb{Z}/\ell$ abelian-by-central {G}alois theory of prime
  divisors}},
        date={2011},
   booktitle={The arithmetic of fundamental groups: Pia 2010},
   publisher={Springer-Verlag},
       pages={225\ndash 244},
}

\bib{Pop2011}{article}{
      author={Pop, F.},
       title={{On the birational anabelian program initiated by {B}ogomolov
  {I}}},
        date={2012},
        ISSN={0020-9910},
     journal={Invent. Math.},
      volume={187},
      number={3},
       pages={511\ndash 533},
         url={http://dx.doi.org/10.1007/s00222-011-0331-x},
      review={\MR{2891876}},
}

\bib{Pop2012a}{incollection}{
      author={Pop, F.},
       title={{Recovering function fields from their decomposition graphs}},
        date={2012},
   booktitle={{Number theory, analysis and geometry}},
   publisher={Springer},
     address={New York},
       pages={519\ndash 594},
         url={http://dx.doi.org/10.1007/978-1-4614-1260-1_24},
      review={\MR{2867932}},
}

\bib{Rost-chain-lemma}{article}{
      author={{Rost}, M.},
       title={Chain lemma for splitting fields of symbols},
        date={1998},
     journal={Preprint},
      eprint={http://www.math.uni-bielefeld.de/~rost/chain-lemma.html},
}

\bib{zbMATH01673451}{book}{
      author={{Serre}, J.-P.},
       title={{Galois cohomology. Transl. from the French by Patrick Ion. 2nd
  printing.}},
    language={English},
     edition={2nd printing},
   publisher={Berlin: Springer},
        date={2002},
        ISBN={3-540-42192-0/hbk},
}

\bib{Topaz2012c}{article}{
      author={{Topaz}, A.},
       title={{Commuting-liftable subgroups of {G}alois groups II}},
        date={2014},
     journal={J. Reine angew. Math. (to appear)},
      eprint={http://www.arxiv.org/abs/1208.0583},
}

\bib{TopazProc2013}{incollection}{
      author={{Topaz}, A.},
       title={Detecting valuations using small {G}alois groups},
        date={2014},
   booktitle={Valuation theory in interaction},
   publisher={EMS Series of Congress Reports},
       pages={566\ndash 578},
}

\bib{2013arXiv1310.5613T}{article}{
      author={{Topaz}, A.},
       title={{Abelian-by-Central Galois groups of fields I: a formal
  description}},
        date={2015},
     journal={Trans. Amer. Math. Soc. (to appear)},
      eprint={http://arxiv.org/abs/1310.5613},
}

\bib{Voevodsky2011}{article}{
      author={Voevodsky, V.},
       title={{On motivic cohomology with $\mathbb{Z}/l$-coefficients.}},
    language={English},
        date={2011},
        ISSN={0003-486X; 1939-8980/e},
     journal={{Ann. Math. (2)}},
      volume={174},
      number={1},
       pages={401\ndash 438},
}

\bib{zbMATH05594008}{article}{
      author={{Weibel}, C.},
       title={{The norm residue isomorphism theorem.}},
    language={English},
        date={2009},
        ISSN={1753-8416; 1753-8424/e},
     journal={{J. Topol.}},
      volume={2},
      number={2},
       pages={346\ndash 372},
}

\end{biblist}
\end{bibdiv}

\end{document}